\newtheorem{teore}{Theorem}[section] 
\newtheorem{theorem}[teore]{Theorem} 
\newtheorem{defn}[teore]{Definition}
\newtheorem{lemat}[teore]{Lemma}
\newtheorem{corollary}[teore]{Corollary}
\newtheorem{prop}[teore]{Proposition}
\newtheorem{remark}[teore]{Remark}
\theoremstyle{definition}
\newtheorem{example}[teore]{Example}
\DeclareMathOperator*{\Spec}{Spec}
\DeclareMathOperator*{\DSpec}{DSpec}
\DeclareMathOperator*{\diam}{diam}
\DeclareMathOperator*{\tr}{tr}
\begin{document}
\DeclarePairedDelimiter\ceil{\lceil}{\rceil}
\DeclarePairedDelimiter\floor{\lfloor}{\rfloor}

\title{Diminimal families of arbitrary diameter}
\author[L. E. Allem]{L. Emilio Allem}
\address{UFRGS - Universidade Federal do Rio Grande do Sul, 
Instituto de Matem\'atica, Porto Alegre, Brazil}\email{emilio.allem@ufrgs.br}
\author[R. O. Braga]{Rodrigo O. Braga}
\address{UFRGS - Universidade Federal do Rio Grande do Sul, 
Instituto de Matem\'atica, Porto Alegre, Brazil}\email{rbraga@ufrgs.br}
\author[C. Hoppen]{Carlos Hoppen}
\address{UFRGS - Universidade Federal do Rio Grande do Sul, 
Instituto de Matem\'atica, Porto Alegre, Brazil}\email{choppen@ufrgs.br}
\author[E. R. Oliveira]{Elismar R. Oliveira}
\address{UFRGS - Universidade Federal do Rio Grande do Sul, 
Instituto de Matem\'atica, Porto Alegre, Brazil}\email{elismar.oliveira@ufrgs.br}
\author[L. S. Sibemberg]{Lucas Siviero Sibemberg}
\address{UFRGS - Universidade Federal do Rio Grande do Sul, 
Instituto de Matem\'atica, Porto Alegre, Brazil}\email{lucas.siviero@ufrgs.br}
\author[V. Trevisan]{Vilmar Trevisan}
\address{UFRGS - Universidade Federal do Rio Grande do Sul, 
Instituto de Matem\'atica, Porto Alegre, Brazil}\email{trevisan@mat.ufrgs.br }

\subjclass{05C50,15A29}

\keywords{Minimum number of distinct eigenvalues, trees, seeds, integral spectrum}

\maketitle

\begin{abstract}
Given a tree $T$, let $q(T)$ be the minimum number of distinct eigenvalues in a symmetric matrix whose underlying graph is $T$. It is well known that $q(T)\geq d(T)+1$, where $d(T)$ is the diameter of $T$, and a tree $T$ is said to be diminimal if $q(T)=d(T)+1$. 
In this paper, we present families of diminimal trees of any fixed diameter. Our proof is constructive, allowing us to compute, for any diminimal tree $T$ of diameter $d$ in these families, a symmetric matrix $M$ with underlying graph $T$ whose spectrum has exactly $d+1$ distinct eigenvalues.
\end{abstract}


\section{Introduction}

As described by Chu in an influential survey paper~\cite{Chu98}, inverse eigenvalue problems are concerned with the reconstruction of a square matrix $M$ assuming that we are given total or partial information about its eigenvalues and/or eigenvectors. Chu points out to two fundamental questions associated with this problem: \begin{enumerate}
\item \emph{Solvability}, i.e., whether there exists a matrix $M$ with the required eigenvalues and/or eigenvectors. Such a matrix $M$ is said to be a realization of the inverse eigenvalue problem.

\item \emph{Computability}, i.e., whether, assuming that the problem has a solution, there is an efficient procedure to compute (or to find a numerical approximation) of a solution.  
\end{enumerate}
For the inverse eigenvalue problem to be nontrivial or to be meaningful in applications, it is often the case that the sought-after matrix $M$ needs to satisfy additional properties, that is, the domain must be restricted to matrices in a pre-determined class. 

In this paper, we consider classes of symmetric matrices that may be described in terms of graphs. Note that any symmetric matrix $M=(m_{ij}) \in \mathbb{F}^{n \times n}$ over a field $\mathbb{F}$ may be associated with a simple graph $G$ with vertex set $[n]=\{1,\ldots,n\}$ such that distinct vertices $i$ and $j$ are adjacent if and only if $m_{ij} \neq 0$. We say that $G$ is the \emph{underlying graph} of $M$. In fact, the matrix $M$ itself may be viewed as a weighted version of $G$, where each vertex $i$ is assigned the weight $m_{ii} \in \mathbb{F}$ and each edge $ij$ is assigned the weight $m_{ij} \in \mathbb{F}$. Often the focus is on matrices whose elements are in the field $\mathbb{R}$ of real numbers (or on Hermitian matrices over the field $\mathbb{C}$ of complex numbers). We refer to~\cite{BARRETT2020276}, and the references therein, for a more complete historical discussion of this type of inverse eigenvalue problem, known under the acronym IEPG, the \emph{inverse eigenvalue problem for a graph}.

Given a graph $G$, let $\mathcal{S}(G)$ and $\mathcal{H}(G)$ be the sets of real symmetric matrices and of complex Hermitian matrices whose underlying graph is $G$, respectively. An elementary fact about these matrices is that their eigenvalues are real numbers, so that, for any $n$-vertex graph $G$ and any matrix $M\in \mathcal{H}(G)$, the eigenvalues of $M$ may be written as $\lambda_1(M) \leq \cdots \leq \lambda_n(M)$\footnote{When the matrix $M$ is clear from context, we shall omit the explicit reference to $M$ and simply write $\lambda_1\leq \cdots \leq \lambda_n$.}. The multiset $\{\lambda_1,\ldots,\lambda_n\}$ is called the \emph{spectrum} of $M$ and is denoted by $\Spec(M)$, while $\DSpec(M)$ denotes the set of \emph{distinct} eigenvalues of $M$. The \emph{multiplicity} $m_M(\lambda)$ of $\lambda \in \mathbb{R}$ as an eigenvalue of $M$ is the number of occurrences of $\lambda$ in $\Spec(M)$\footnote{It will be convenient to write $m_M(\lambda)=0$ when $\lambda$ is not an eigenvalue of $M$.}. 

A class of matrices that has been under intense scrutiny is the class of \emph{acyclic symmetric matrices}, the class of matrices whose underlying graph is a connected acyclic graph (that is, a tree). The study of acyclic symmetric matrices may be traced back to Parter~\cite{Parter60} and Wiener~\cite{WIENER1984}, and there has been growing interest on properties of these matrices and of parameters associated with them starting with the systematic work of Leal Duarte, Johnson and their collaborators, see for instance~\cite{ParterWiener03,JOHNSON20027,DUARTE1989173,leal2002minimum}. One of the particularities of the acyclic case is that the inverse eigenvalue problem may be reduced to symmetric matrices, in the sense that, for any tree $T$, a multiset of real numbers is equal to the spectrum of a matrix in $\mathcal{H}(T)$ if and only if it is equal to the spectrum of a matrix in $\mathcal{S}(T)$ (see~\cite[Corollary 2.6.3]{JohnsonSaiago2018}). 

Our paper deals with the possible number of distinct eigenvalues of acyclic symmetric matrices. For an in-depth discussion of problems of this type, we refer to a comprehensive book on this topic by Johnson and Saiago~\cite{JohnsonSaiago2018}. More precisely, given a tree $T$, we wish to study the quantity
\begin{equation}\label{def_q}
q(T)=\min\{|\DSpec(A)| \colon A \in \mathcal{S}(T)\},
\end{equation}
the minimum number of distinct eigenvalues over all symmetric real matrices whose underlying graph is $T$. An easy lower bound on this number may be given in terms of the \emph{diameter} of $T$, which we now define. As usual, let $P_d$ denote a path on $d$ vertices. The distance $d_G(u,v)$ between two vertices $u$ and $v$ in a graph $G=(V,E)$ is the length (i.e. the number of edges $d-1$) of a shortest path $P_d$ connecting $u$ and $v$ in $G$, where we say that $d(u,v)=\infty$ if $u$ and $v$ lie in different components of $G$. The diameter $\diam(G)$ of $G$ is 
defined as 
$$\diam(G)=\max\{d(u,v) \colon u,v\in V\}\footnote{We observe that in~\cite{JohnsonSaiago2018,leal2002minimum} the value of the diameter corresponds to the number of \emph{vertices}, rather than edges, on the path connecting two vertices at maximum distance. As a consequence, what we call diameter $d$ is diameter $d+1$ in~\cite{JohnsonSaiago2018,leal2002minimum}}.$$
The following result is proved in~[Lemma 1]\cite{leal2002minimum}.
\begin{theorem}\label{thm:LB}
If $T$ is a tree with diameter $d$ and $A \in \mathcal{S}(T)$, then 
$q(T) \geq d+1$.
\end{theorem}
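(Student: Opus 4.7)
The plan is to exploit the interplay between the graph‐theoretic distance in $T$ and the combinatorial interpretation of powers of $A$. Fix $A\in\mathcal{S}(T)$ and let $k=|\DSpec(A)|$; the goal is to show $k\geq d+1$. Choose vertices $u,v\in V(T)$ with $d_T(u,v)=d$, and let $P$ be the unique $u$--$v$ path in $T$ (unique because $T$ is a tree).

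The key combinatorial input is the standard identification of $(A^j)_{uv}$ with a signed sum, indexed by walks of length $j$ from $u$ to $v$, of products of the corresponding edge weights $a_{xy}$. Two observations follow. First, for every $j<d$ there is no walk of length $j$ from $u$ to $v$ in $T$, so $(A^j)_{uv}=0$; in addition $(I)_{uv}=0$ since $u\neq v$. Second, for $j=d$ the only walk of length $d$ from $u$ to $v$ is $P$ itself, because a tree contains no cycles and hence admits no other walk of length $d$ between vertices at distance $d$. Therefore
\[
(A^{d})_{uv} \;=\; \prod_{xy\in E(P)} a_{xy},
\]
which is nonzero, since each factor is a nondiagonal entry of $A$ indexed by an edge of $T$ and thus nonzero by the definition of the underlying graph.

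Now invoke the minimal polynomial. Since $A$ is diagonalizable (being symmetric) and has exactly $k$ distinct eigenvalues, its minimal polynomial has degree $k$. Consequently $A^{k}$ is a real linear combination of $I,A,A^{2},\ldots,A^{k-1}$, and inductively the same is true of every power $A^{j}$ with $j\geq k$. Suppose for contradiction that $k\leq d$. Then $A^{d}$ may be written as $\sum_{i=0}^{k-1}c_{i}A^{i}$, and taking the $(u,v)$ entry gives
\[
(A^{d})_{uv} \;=\; \sum_{i=0}^{k-1}c_{i}(A^{i})_{uv} \;=\; 0,
\]
because every summand vanishes by the first observation ($i\leq k-1\leq d-1<d$). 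This contradicts $(A^{d})_{uv}\neq 0$, so $k\geq d+1$, as required.

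The argument is short and the only step that requires any care is the verification that $(A^{d})_{uv}\neq 0$: one must be certain that in a tree the unique walk of length exactly $d$ between two vertices at distance $d$ is the geodesic path itself, ruling out detours that would require a cycle. Everything else is the standard minimal polynomial / distance argument, and no assumption beyond Theorem's hypothesis (and the fact that symmetric matrices are diagonalizable) is used.
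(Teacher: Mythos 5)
Your argument is correct and is precisely the standard proof of this fact (the minimal‐polynomial / walk‐counting argument of Leal Duarte and Johnson); the paper itself does not reproduce a proof but cites~\cite{leal2002minimum} for exactly this statement, so you have independently reconstructed the cited proof. One small point worth making explicit: when expanding $(A^j)_{uv}$ as a sum over index sequences, a term may linger at a vertex via a nonzero diagonal entry $a_{ii}$, so the ``walks'' in question are really sequences of steps each of which is either an edge of $T$ or a self‐loop; this does not affect your conclusion, since any such sequence from $u$ to $v$ must still traverse at least $d$ edges, hence none of length $<d$ reaches $v$ and the unique one of length exactly $d$ is the geodesic with no self‐loops, but it is worth stating to make the verification of $(A^d)_{uv}\neq 0$ airtight.
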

The authors of~\cite{leal2002minimum} suspected that, for every tree $T$ of diameter $d$, there exists a matrix $A \in \mathcal{S}(T)$ with exactly $d+1$ distinct eigenvalues. However, this turns out to be false. Barioli and Fallat~\cite{barioli2004two} constructed a tree $T$ on $16$ vertices such that $\diam(T)=6$, but $q(T)=8$. It is now known that $q(T)=d+1$ for every tree $T$ of diameter $d$ if and only if $d\leq 5$~\cite{JohnsonSaiago2018}. For diameter $d\geq 6$, it is thus natural to characterize the trees $T$ for which $q(T)=\diam(T)+1$, which are known as \emph{diameter minimal} (or \emph{diminimal}, for short). The set $\mathcal{D}_d$ of diminimal trees of any fixed dimension $d$ is nonempty, as we trivially have $P_{d+1} \in \mathcal{D}_d$. Johnson and Saiago~\cite{JohnsonSaiago2018} show that the families $\mathcal{D}_{d}$ are infinite\footnote{In the sense that the set of unlabelled trees of diameter $d$ that are diminimal is infinite.} for every $d$. 

One of the main tools used to address this problem in~\cite{JohnsonSaiago2018} is the construction of trees using an operation called \emph{branch duplication}~\cite{JOHNSON2020}. This concept will be formally defined in Section~\ref{sec:seeds},  but the intuition is that, for any fixed positive integer $d$, there is a finite set $\mathcal{S}_d$ of (unlabelled) trees of diameter $d$, called the \emph{seeds of diameter $d$}, with the property that any (unlabelled) tree of diameter $d$ may be obtained from one of the seeds of diameter $d$ by a sequence of branch duplications. As it turns out, for any tree $T$ of diameter $d$ there is a single seed of diameter $d$ from which it can be obtained, so that the seeds are precisely the trees that cannot be obtained from smaller trees through branch duplication. To illustrate why this can be useful for our purposes, we mention that Section 6.5 in~\cite{JohnsonSaiago2018} deals with $q(T)$ for trees of diameter $d=6$, for which the set $\mathcal{S}_6$ contains 12 seeds. Johnson and Saiago show that the families generated by nine of these seeds consist entirely of diminimal trees, while, in each of the remaining three families, at least one of the trees is not diminimal.

The main result in our paper is that, for any fixed $d \geq 4$, there are at least two seeds $S_d$ and $S'_d$ of diameter $d$ such that the families $\mathcal{T}(S_d)$ and $\mathcal{T}(S'_d)$ generated by these seeds consist entirely of diminimal trees. If $d \geq 5$ is odd, there is a third seed $S''_d$ for which this property holds. These seeds are formally defined in Definition~\ref{def_seeds}, and they are depicted in Figures~\ref{fig:s6}-\ref{fig:s72} for small values of $d$. 

\begin{theorem}\label{thm:main}
Let $d$ be a positive integer. Let $\mathcal{T}(S_d)$, $\mathcal{T}(S'_d)$ and $\mathcal{T}(S''_d)$ be the families of trees of diameter $d$ generated by the seeds $S_d$, $S'_d$ and $S''_d$, respectively, where $S'_d$ is defined for $d\geq 4$ and $S''_d$ for odd values of $d \geq 5$. For every $T\in \mathcal{T}(S_d) \cup \mathcal{T}(S'_d) \cup \mathcal{T}(S''_d)$, we have $q(T)=d+1$.
\end{theorem}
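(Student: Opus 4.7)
The plan is to establish $q(T) = d+1$ by combining the lower bound $q(T) \geq d+1$ from Theorem~\ref{thm:LB} with a matching upper bound obtained constructively. The upper bound proof proceeds in two stages dictated by the seed/branch-duplication framework: (i) show that each of the seeds $S_d$, $S'_d$ and $S''_d$ admits a matrix in $\mathcal{S}(\cdot)$ with exactly $d+1$ distinct eigenvalues, and (ii) show that branch duplication cannot increase $q$, so that the witnessing property is inherited by every tree in the family generated by a seed. Since the statement is an equality and the lower bound is already known, the entire content of the theorem is the construction of realizers.

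For stage (i), I would exhibit for each seed an explicit parametric assignment of vertex and edge weights, producing a matrix $M$ whose characteristic polynomial $\det(xI - M)$ has exactly $d+1$ distinct roots (with the remaining eigenvalues forced to coincide with roots of this polynomial in order to drive the count down). To handle all $d$ uniformly, I would exploit the recursive nature of the seeds: the characteristic polynomial of a pendant-like construction satisfies a three-term recursion reminiscent of Chebyshev polynomials on a path, and one can tune the weights along the central spine of each seed so that the resulting polynomial has prescribed simple roots in a convenient (e.g.\ arithmetic) progression. The shape of the seeds $S_d$, $S'_d$ and $S''_d$ is chosen precisely to admit such symmetric weightings; the parity restriction on $S''_d$ reflects a parity obstruction in one of these recursions.

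For stage (ii), the workhorse is a branch-duplication lemma: if $T$ is a tree, $B$ is a branch of $T$ rooted at $v$, and $T^*$ is obtained from $T$ by attaching an isomorphic copy $B'$ of $B$ at $v$, then any realizer $M \in \mathcal{S}(T)$ can be extended to a matrix $M^* \in \mathcal{S}(T^*)$ with $\DSpec(M^*) \subseteq \DSpec(M)$. The argument uses the involution that swaps $B$ and $B'$: the antisymmetric subspace decouples and contributes a copy of the spectrum of the principal submatrix indexed by $B$, which by Parter/interlacing-type arguments already appears in $\Spec(M)$; the symmetric subspace yields a quotient whose spectrum coincides with that of $M$ after rescaling the edge at $v$. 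Iterating the lemma along any duplication sequence producing $T \in \mathcal{T}(S_d)$ from its seed yields a realizer for $T$ with the same $d+1$ distinct eigenvalues as the one produced in stage (i).

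The main obstacle is stage (i): producing, uniformly in $d$, weights on each seed so that the characteristic polynomial collapses to exactly $d+1$ distinct roots. Because the target count $d+1$ matches the lower bound from Theorem~\ref{thm:LB}, there is zero slack—every forced multiplicity must be tightly controlled, and this amounts to solving a nonlinear algebraic system whose solvability is not a priori guaranteed and depends on both the combinatorial structure of the seed and the parity of $d$. By contrast, once the seed realizers are in hand, stage (ii) is a structural argument that can be extracted from the standard equitable-partition machinery associated with branch duplication.
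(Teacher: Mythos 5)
Your two-stage plan (realize each seed with $d+1$ distinct eigenvalues, then argue stability under branch duplication) is a natural first thought, and at a high level it captures the spirit of the branch-duplication framework. But the workhorse lemma you propose for stage~(ii) is not correct as stated, and closing that gap is precisely the technical heart of the paper.

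You assert: if $T^*$ is obtained from $T$ by duplicating a branch $B$ at $v$, then any realizer $M\in\mathcal{S}(T)$ extends to $M^*\in\mathcal{S}(T^*)$ with $\DSpec(M^*)\subseteq\DSpec(M)$, because the antisymmetric part of the involution contributes $\Spec(M[B])$, ``which by Parter/interlacing-type arguments already appears in $\Spec(M)$.'' That justification does not hold. Cauchy interlacing only says the eigenvalues of the principal submatrix $M[B]$ interlace those of $M$; it does \emph{not} place $\Spec(M[B])$ inside $\Spec(M)$. For the antisymmetric piece to contribute no new eigenvalues you actually need a strong structural condition: each eigenvalue of $M[B]$ must already be an eigenvalue of $M$, and $v$ must be a Parter-type vertex for those eigenvalues so that the multiplicity bookkeeping survives. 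Generic realizers of $T$ do not satisfy this. (Even the symmetric part is delicate: after passing to the quotient the edge at $v$ is rescaled by $\sqrt{s+1}$, so $\DSpec$ of the quotient is not that of $M$ unless you pre-adjust the weight, which you can do but then you must still handle the antisymmetric part.) Once that lemma collapses, stage~(i) is not enough either: you must build the seed realizer so that it already has the right Parter/downer behaviour at every potential duplication vertex, for every eigenvalue of every duplicable branch, and you must show this structure is \emph{preserved} through arbitrary sequences of duplications. None of this is in your proposal.

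The paper's solution is exactly to encode those extra requirements and carry them through an induction. The ``strongly realizable'' notion demands, beyond $\DSpec(M)\subseteq A$, that $L(M,\lambda_{2i})=0$ (the algorithm puts a zero at the root, not deeper) and that $m_{M[T-v]}(\lambda_{2i-1})=m_M(\lambda_{2i-1})+1$ (the root is Parter for the odd-index eigenvalues). These are precisely the conditions that make the gluing Lemma~\ref{multiplicidades} work when one forms $T_0\odot(T_1,\ldots,T_p)$, and they are what your ``Parter/interlacing-type arguments'' would have to establish but cannot. Also note a structural difference: rather than ``realize the seed, then duplicate,'' the paper inducts directly on height via the $\odot$-decomposition of Proposition~\ref{prop_equivalence}, building a realizer of a height-$(k{+}1)$ tree from realizers of its height-$k$ pieces. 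This sidesteps the need to track an unbounded sequence of individual duplications, and it is where Lemma~\ref{lema_define_max} (freedom to prescribe $\lambda_{\max}$ or $\lambda_{\min}$ by tuning the new edge weights) and the perturbation parameters $\theta_k,\delta_k$ in Theorem~\ref{main_th}(v)--(vi) enter. Your stage~(i) sketch of ``tuning the spine to get an arithmetic progression of roots'' is also too vague to substitute for this: the target of $d+1$ distinct eigenvalues leaves zero slack, so you would in any case have to prove that the nonlinear system you allude to is solvable uniformly in $d$ and that its solution has the Parter structure required for duplication, which brings you back to the same machinery.
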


The main additional tool in our proof of Theorem~\ref{thm:main} is an algorithm by Jacobs and Trevisan~\cite{JT2011} that was proposed to solve a problem known as \emph{eigenvalue location} for matrices associated with graphs. A detailed discussion is deferred to Section~\ref{sec:eigenvalue_location}, but we can anticipate that it will have an important role in an inductive approach for Theorem~\ref{thm:main}.

As a byproduct of our proof of Theorem~\ref{thm:main} (see Theorem~\ref{main_th}), we obtain a constructive procedure that, given a tree $T \in \mathcal{T}(S_d) \cup \mathcal{T}(S'_d) \cup \mathcal{T}(S''_d)$, produces a symmetric matrix $A \in \mathbb{R}^{n \times n}$ with underlying tree $T$ with the property that $q(T)=|\DSpec(A)|=d+1$. This means that, in addition to exploring the existence of such a matrix, we also address its computability. In particular, the procedure allows us to produce such a matrix $A$ with integral spectrum, i.e., with the property that its spectrum consists entirely of integers. More generally, let
\begin{eqnarray*}
&& q_{int}(T)=\min\{|\DSpec(A)| \colon A \in \mathcal{S}_{int}(T)\},
\end{eqnarray*}
where $\mathcal{S}_{int}(T)$ is the subset of $\mathcal{S}(T)$ whose matrices are integral (i.e., have integral spectrum). Clearly, we have $q(T)\leq q_{int}(T)$ for any tree $T$.
Our work implies that the following holds for all trees $T$ with diameter $d \leq 5$ and for all $T \in \mathcal{T}(S_d) \cup \mathcal{T}(S'_d) \cup \mathcal{T}(S''_d)$ with $d \geq 6$:
$$q(T)=q_{int}(T).$$ 
It would be interesting to understand how these parameters relate for arbitrary trees. We should mention that, even though the focus of this paper is on matrices associated with trees, there has been a lot of research on the parameter $q(G)$ for more general graphs, see \cite{F2013,allred2022combinatorial,BARRETT2020276,barrett2017generalizations,fallat2022minimum} for example.

Our paper is structured as follows. In the next two sections, we describe the main ingredients used in our proofs. In Section~\ref{sec:eigenvalue_location}, we present an algorithm that \emph{locates} eigenvalues of trees, that is, an algorithm that for any real symmetric matrix $M$ whose underlying graph is a tree and for any given real interval $I$, finds the number of eigenvalues of $M$ in $I$. We illustrate its usefulness by providing a short proof of the classical Parter-Wiener Theorem.
In Section~\ref{sec:seeds}, we describe how trees of any fixed diameter $d$ can be constructed by a sequence of \emph{branch duplications} starting with some irreducible tree with diameter $d$, which is known as a seed.

The remaining sections deal with the proof of Theorem~\ref{thm:main}. In Section~\ref{sec:strongly_realizable}, we state a technical tool (Theorem~\ref{main_th}) that is the heart of the proof. Given $d\geq 1$, it will allow us to inductively define a set of real numbers (of size $d+1$) that, for any tree $T \in \mathcal{T}(S_d)$ with diameter $d$, is equal to the set of distinct eigenvalues in the spectrum of a symmetric matrix $M(T)$ with underlying graph $T$. A set of this type will be called \emph{strongly realizable} because there are realizations of it for all trees in the class under consideration. The existence of such a set immediately implies the validity of Theorem~\ref{thm:main} for seeds of type $S_d$.

Theorem~\ref{main_th} will then be proved by induction in Section~\ref{sec:proof_technical}. To conclude the paper, Section~\ref{sec:other_seeds} uses strongly realizable sets to give a proof of Theorem~\ref{thm:main} for seeds of type $S_d'$ and $S_d''$. Moreover, we explain how this may be used to obtain matrices with the minimum number of distinct eigenvalues that satisfy additional properties, such as having integral spectrum. An explicit construction is given in Section~\ref{sec:example}.  



\section{Eigenvalue location in trees}\label{sec:eigenvalue_location}

In a seminal paper~\cite{JT2011}, Jacobs and one of the current authors have proposed an algorithm that, given a real symmetric matrix $M$ whose underlying graph is a tree and a real interval $I$, finds the number of eigenvalues of $M$ in $I$. In fact, the work in~\cite{JT2011} was specifically concerned with eigenvalues of the adjacency matrix of an arbitrary tree. However, the strategy could be extended in a natural way to arbitrary symmetric matrices associated with trees. This more general algorithm, stated in Figure~\ref{chap3:treeunder}, appears in~\cite{TEMA1041}. 

The algorithm runs on a \emph{rooted tree}, that is a tree $T$ for which one of the vertices $r$ is distinguished as the \emph{root}. Each neighbor of $r$ is regarded as a {\em child} of $r$, and $r$ is called its {\em parent}. For each child $c$ of $r$, all of its neighbors, except the parent, become its children. This process continues until all vertices except $r$ have parents.
A vertex that does not have children is called a {\em leaf} of the rooted tree. For the algorithm, the tree $T$ that underlies the input matrix $M$ may be assigned an arbitrary root, but its vertex set must be ordered \emph{bottom-up}, that is, any vertex must appear after all its children. In particular, the root is the last vertex in such an ordering.

\begin{figure}[h]
\input{chap3-fig-under}
\caption{\label{chap3:treeunder} Diagonalizing $M + xI$ for a symmetric 
matrix $M$ associated with the tree $T$.}
\end{figure}
\index{Diagonalize Weighted Tree}

The following theorem summarizes the way in which the algorithm will be applied. Its proof is based on a property of matrix congruence known as Sylvester's Law of Inertia, we refer to~\cite{livro} for details.
\begin{theorem}
\label{inertia}
Let $M$ be a symmetric matrix of order $n$ that corresponds to a weighted tree $T$ and let $x$ be a real number. Given a bottom-up ordering of $T$, let $D$ be the diagonal matrix produced by Algorithm Diagonalize with entries $T$ and $x$. The following hold:
\begin{itemize}
    \item[(a)] The number of positive entries in the diagonal of $D$ is the number of eigenvalues of $M$ (including multiplicities) that are greater than $-x$. 
    
    \item[(b)] The number of negative entries in the diagonal of $D$ is the number of eigenvalues of $M$ (including multiplicities) that are less than $-x$. 
    
    \item[(c)] The number of zeros in the diagonal of $D$ is the multiplicity of $-x$ as en eigenvalue of $M$. 
\end{itemize}
\end{theorem}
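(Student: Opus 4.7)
The plan is to reduce the theorem to Sylvester's Law of Inertia by establishing that the diagonal matrix $D$ produced by Algorithm Diagonalize is congruent to $M + xI$. Recall that Sylvester's Law states that congruent real symmetric matrices share the same inertia triple $(n_+, n_-, n_0)$ of positive, negative, and zero eigenvalues. Since the eigenvalues of $M + xI$ are $\lambda_i(M) + x$, positive diagonal entries of $D$ correspond precisely to eigenvalues $\lambda_i(M) > -x$, negative entries to $\lambda_i(M) < -x$, and zero entries to occurrences of $-x$ in $\Spec(M)$. Therefore statements (a), (b), and (c) follow immediately once the congruence is established, and all the work is in proving that $D$ is congruent to $M+xI$.

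The natural route is induction on the iteration index $k$, maintaining the invariant that after step $k$ the current matrix $M^{(k)}$ is symmetric and congruent to $M + xI$, its diagonal entries at the already processed positions $v_1,\ldots,v_k$ are exactly $d_1,\ldots,d_k$, and the processed positions are decoupled from all later positions, i.e.\ the only nonzero off-diagonal entries involve pairs of unprocessed vertices. The bottom-up ordering is crucial here: when $v_k$ is processed, each of its children is already a diagonal entry, and the only nonzero entries on row/column $k$ link $v_k$ to its children and to its parent in $T$.

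In the generic case, where each child $c$ of $v_k$ has a nonzero pivot $d_c$, the update $d_k \leftarrow d_k - \sum_{c} (m_{ck})^2/d_c$ matches exactly the pair of symmetric elementary operations $R_k \leftarrow R_k - (m_{ck}/d_c) R_c$ and $C_k \leftarrow C_k - (m_{ck}/d_c) C_c$ applied for each child $c$. Such a pair has the form $E^\top (\cdot) E$, so it is a congruence, and it zeroes out the $(c,k)$ and $(k,c)$ entries without creating new nonzero entries elsewhere, since the rows and columns indexed by children are already purely diagonal. Hence the inductive invariant is preserved.

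The main subtlety, and the expected obstacle, is the zero-pivot branch: when some child $v_j$ of $v_k$ satisfies $d_j = 0$, the Schur complement above is undefined. At this moment, the $2 \times 2$ principal submatrix indexed by $\{v_j, v_k\}$ equals
\[
\begin{pmatrix} 0 & m_{jk} \\ m_{jk} & d_k \end{pmatrix},
\]
whose inertia is $(1,1,0)$ whenever $m_{jk} \neq 0$, matching that of $\mathrm{diag}\bigl(2,\, -(m_{jk})^2/2\bigr)$. A tailored sequence of symmetric row/column operations mixing rows $j$, $k$, and the parent row $\ell$ simultaneously achieves three things: it diagonalizes the $\{j,k\}$-block to the claimed entries, it cancels the $(k,\ell)$ entry, and it leaves all other rows and columns untouched. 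The removal of the edge $\{v_k, v_\ell\}$ in the algorithm precisely records that the $(k,\ell)$ entry has been cleared, so that $v_\ell$ can later be processed without reference to $v_k$. Verifying these operations algebraically, by exhibiting an explicit congruence matrix acting on the three coordinates $\{j,k,\ell\}$, is the only calculational step; the delicate point is checking that no spurious nonzero entries appear in other rows and columns, which uses the fact that $v_j$ is a leaf of the currently processed subtree and hence has no entries in column $\ell$. Composing all these elementary congruences for $k = 1, \ldots, n$ yields a nonsingular $P$ with $P^\top (M + xI) P = D$, and Sylvester's Law then delivers (a), (b), and (c).
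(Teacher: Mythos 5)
Your proof follows the standard route that the paper invokes by reference: Sylvester's Law of Inertia combined with an inductive verification that Algorithm Diagonalize realizes a sequence of congruences of $M+xI$. The overall plan is correct, but the sketch contains two genuine gaps.

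First, the stated inductive invariant is too strong and in fact contradicts your own next sentence. You claim that after step $k$ ``the only nonzero off-diagonal entries involve pairs of unprocessed vertices,'' i.e.\ every processed $v_i$ has already been reduced to an isolated diagonal entry. But then, when $v_k$ is reached, the already-processed children of $v_k$ would carry no entry linking them to $v_k$, contradicting what you observe immediately afterward, namely that row/column $k$ still links $v_k$ to its children. The invariant that actually propagates is weaker: at the start of iteration $k$, each processed $v_i$ carries its final diagonal value $d_i$ together with at most one off-diagonal nonzero, the entry $m_{i\ell_i}$ to its (not yet processed) parent, which survives until the parent is processed or is removed by a zero-pivot step. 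Precisely because a child's row contains only the two nonzeros $d_c$ and $m_{ck}$, the pair $R_k\leftarrow R_k-(m_{ck}/d_c)R_c$, $C_k\leftarrow C_k-(m_{ck}/d_c)C_c$ clears $(c,k)$ and $(k,c)$ without fill and contributes $-m_{ck}^2/d_c$ to the diagonal; without fixing your invariant the induction cannot be set up.

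Second, in the zero-pivot branch you describe a congruence acting only on rows $j$, $k$, and $\ell$. But when the else-branch fires, the algorithm performs no Schur update at all, so the entries $(k,c)$ and $(c,k)$ for the other children $c\neq j$ of $v_k$ remain nonzero and are not handled by the three-row congruence you propose; without clearing them, the product of your elementary matrices does not carry $M+xI$ to a diagonal matrix. They must be cleared, and the same device works: row $j$ has $m_{jk}$ as its only off-diagonal nonzero and $d_j=0$, so the pair $R_c\leftarrow R_c-(m_{ck}/m_{jk})R_j$, $C_c\leftarrow C_c-(m_{ck}/m_{jk})C_j$ annihilates $(c,k)$ and $(k,c)$ without altering $d_c$ or creating new entries, because $v_j$ is adjacent in $T$ only to $v_k$; the same operation applied to row $\ell$ clears $(\ell,k)$ and justifies the ``remove the edge'' instruction. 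After dispatching all of $v_k$'s neighbors in this way, the isolated $\{j,k\}$ block has inertia $(1,1,0)$ and may be replaced by $\mathrm{diag}\bigl(2,-m_{jk}^2/2\bigr)$, as you indicate. With both adjustments the argument goes through.
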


An immediate consequence of this result is the well-known fact that the multiplicity of the maximum and of the minimum eigenvalue of any tree is always equal to 1.
\begin{theorem}\label{thm:simpleroots}
Let $T$ be a tree, let $M\in \mathcal{S}(T)$, and consider $\lambda_{\min}=\min(\Spec(M))$ and $\lambda_{\max}=\max(\Spec(M))$. Then, $m_{M}(\lambda_{\min}) = 1 =m_{M}(\lambda_{\max})$.
\end{theorem}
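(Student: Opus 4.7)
The plan is to deduce Theorem~\ref{thm:simpleroots} directly from Theorem~\ref{inertia} by examining the diagonal produced by $\mathrm{Diagonalize}(M,x)$ for the two extremal choices $x=-\lambda_{\max}$ and $x=-\lambda_{\min}$. I will write the argument for $\lambda_{\max}$ in detail; the case of $\lambda_{\min}$ is entirely symmetric.

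Fix any bottom-up ordering of $T$ and run $\mathrm{Diagonalize}(M,-\lambda_{\max})$ to obtain the diagonal matrix $D=\mathrm{diag}(d_1,\ldots,d_n)$. Since no eigenvalue of $M$ exceeds $\lambda_{\max}$, Theorem~\ref{inertia}(a) forces $d_i\le 0$ for every $i$. The key observation is that this sign restriction prohibits the else branch of the algorithm from ever being executed: whenever that branch fires at a vertex $v_k$ it assigns $d_j:=2>0$ to one of its children $v_j$, and because the else branch simultaneously removes the edge from $v_k$ to its parent, this positive value persists in the final diagonal, contradicting $d_i\le 0$ for all $i$.

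With the else branch ruled out, no edges of $T$ are removed during the run, so for every non-root vertex $v_i$, when its parent $v_\ell$ is eventually processed, $v_i$ is still among the children of $v_\ell$. If one had $d_i=0$ at that moment, the else branch at $v_\ell$ would be forced to trigger---precisely the contradiction we just excluded. Hence $d_i\neq 0$ for every non-root $v_i$, so at most one entry of $D$ (the one corresponding to the root) can vanish. Theorem~\ref{inertia}(c) then yields $m_M(\lambda_{\max})\le 1$, with equality since $\lambda_{\max}$ is an eigenvalue. Running the identical argument with $x=-\lambda_{\min}$---where $d_i\ge 0$ is the required sign and the else branch is blocked because it would set $d_k=-(m_{jk})^2/2<0$---gives $m_M(\lambda_{\min})=1$.

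The only point requiring a moment of care is the bookkeeping fact that once $d_i$ is set during the processing of $v_i$, it is never rewritten later in the execution; but this is immediate once the else branch is known to be silent, since that branch is the sole mechanism in the algorithm for modifying an already-assigned diagonal entry. I therefore anticipate no genuine obstacle beyond this routine verification.
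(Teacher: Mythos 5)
Your proof is correct and takes essentially the same approach as the paper: both rely on Theorem~\ref{inertia} to conclude that the \emph{else} branch of the algorithm would produce a persistent diagonal entry of the forbidden sign, so that a zero can occur only at the root. (The paper argues for $\lambda_{\min}$ directly and obtains $\lambda_{\max}$ via $M\mapsto -M$; you handle both directly.) One small imprecision: after the else branch fires at $v_k$, the positive value $d_j=2$ persists not because of the deletion of the edge $\{v_k,v_\ell\}$ (that deletion protects $d_k$, not $d_j$), but because the only step that could ever rewrite $d_j$ is the processing of $v_j$'s parent $v_k$, which has just occurred; this local observation also resolves the apparent circularity in your closing remark, since it does not presuppose global silence of the else branch.
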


\begin{proof}
Let $T$ be a tree and $M\in \mathcal{S}(T)$. We prove the theorem for $\lambda_{\min}$, the proof for $\lambda_{\max}$ follows from the fact that $\lambda_{\max}(M)=-\lambda_{\min}(-M)$. 

Choose an arbitrary root $v$ for $T$ and fix a bottom-up ordering $v_1,\ldots,v_n=v$ of $T$. Set $x=-\lambda_{\min}$ and consider an application of \texttt{Diagonalize}$(M,x)$. Because $\lambda_{\min}$ is an eigenvalue of $M$, at least one of the diagonal elements $d_j$ must be zero at the end of the algorithm by Theorem~\ref{inertia}(c). 

We claim that $v_j=v$, which implies the desired result. Assume for a contradiction that $v_j \neq v$, so that $v_j$ has a parent $v_k$ in $T$. Because $d_j$ is 0, at the time $v_k$ is processed, it has a child with value 0. The algorithm assigns the positive value 2 to one of the children $v_{j'}$ of $v_k$ with this property and the negative value $-(m_{j'k})^2/2$ to $v_k$. These values cannot be modified in later steps. Theorem~\ref{inertia}(b) implies that $M$ has an eigenvalue $\lambda$ such that $\lambda<\lambda_{\min}$, a contradiction. 
\end{proof}

Theorem~\ref{inertia} can also be used to give a short proof of a result attributed to Parter and Wiener, see~\cite[Theorem 2.3.1]{JohnsonSaiago2018}. We include the proof here to illustrate how our proof method applies. Given a tree $T$ and a matrix $M(T)$ for which $T$ is the underlying tree, we write $M[T-v]$ to denote the submatrix of $M$ obtained by deleting the row and the column corresponding to a vertex $v$ of $T$. More generally, if $T'$ is a subgraph of $T$, we write $M[T']$ for the submatrix of $M$ induced by the rows  and columns corresponding to vertices of $T'$. 
\begin{theorem}[Parter-Wiener Theorem]\label{parter-wiener}
 Let $T$ be a tree, let $M\in \mathcal{S}(T)$, and suppose that $\lambda \in \mathbb{R}$ is such that $m_{M}(\lambda)\geq2$. Then there is a vertex $v$ of $T$ of degree at least $3$ such that $m_{M[T-v]}(\lambda) = m_{M}(\lambda) + 1$. Moreover, $\lambda$ occurs as an eigenvalue of $M[T_i]$ for at least three different components $T_i$ of $T-v$.
\end{theorem}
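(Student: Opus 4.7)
The plan is to run Algorithm Diagonalize from Figure~\ref{chap3:treeunder} with input $M$ and $x = -\lambda$, on $T$ with an arbitrary bottom-up ordering, and to extract the desired vertex $v$ from the trace of the execution. By Theorem~\ref{inertia}(c), the final diagonal $D$ has exactly $\mu := m_M(\lambda) \geq 2$ zero entries, so the proof reduces to pinpointing where these zeros come from.

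The first ingredient I would establish is a subtree consistency lemma: for every vertex $u$, the execution of Diagonalize inside the subtree $T_u$ rooted at $u$ depends only on $M[T_u]$, so the value of $d_u$ immediately after $u$ is processed coincides with what Diagonalize$(M[T_u], -\lambda)$ would produce at the root of $T_u$. Theorem~\ref{inertia}(c) applied to $M[T_u]$ then translates ``$d_u = 0$ at that moment'' into ``$\lambda \in \Spec(M[T_u])$''. The second ingredient is an accounting of the final zeros: after a $d$-value is computed it is altered only by an ``else'' trigger, which turns one selected zero child into $2$, sets the parent to a negative value, and leaves any other zero children untouched. Consequently the number of final zeros equals $\sum_{v_k}(t_k - 1) + \varepsilon$, where the sum runs over vertices $v_k$ at which the ``else'' branch fired, $t_k$ is the number of zero children of $v_k$ at the firing moment, and $\varepsilon \in \{0,1\}$ records whether the root ends at $0$. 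Because $\mu \geq 2$, some $v_k$ must satisfy $t_k \geq 2$, and I would take $v = v_k$.

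With two zero children $v_{j_1}, v_{j_2}$ at the firing moment, the subtree lemma gives $\lambda \in \Spec(M[T_{v_{j_1}}])$ and $\lambda \in \Spec(M[T_{v_{j_2}}])$, producing two components of $T - v$ carrying $\lambda$ in their spectra. To sharpen this to three components and to the equality $m_{M[T-v]}(\lambda) = \mu + 1$, I would combine Cauchy interlacing (which yields the upper bound $m_{M[T-v]}(\lambda) \leq \mu + 1$) with the block decomposition $M[T-v] = \bigoplus_i M[T_i]$, and reapply the subtree reasoning after re-rooting $T$ so that the ``parent-side'' component of $T - v$ also appears as a rooted subtree. The degree bound $\deg_T(v) \geq 3$ then follows because $v$ has two children in $T$ and either a parent or a third child whose component also contributes $\lambda$ to the spectrum. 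The main obstacle I anticipate is the treatment of the parent-side component: the rooted-subtree statement does not cover it directly, so forcing the multiplicity count to come out to exactly $\mu + 1$ will require a careful re-rooting argument (or an independent interlacing computation on the complement of $T_v$ in $T$), together with a check that the root's potential contribution $\varepsilon$ in the accounting is consistent with the component counts.
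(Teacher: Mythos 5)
Your framework (run the Diagonalize algorithm, use the subtree-consistency observation, do a global accounting of the final zeros) is the same as the paper's, and, in fact, your accounting identity and the resulting multiplicity jump $m_{M[T-v]}(\lambda)=m_M(\lambda)+1$ hold for \emph{every} vertex at which the else-branch fires, not only for the one you plan to select. The gap is in the selection itself. Picking an arbitrary firing vertex $v_k$ with $t_k\geq 2$ zero children does not force $\deg_T(v_k)\geq 3$, nor does it force three components of $T-v_k$ to carry $\lambda$. A concrete failure: take the tree $b,c-a-p-r-q-d-e,f$ (so $a$ and $d$ each have two pendant leaves and $r$ is adjacent to $p$ and $q$), let $M$ be its adjacency matrix, $\lambda=0$, root at $r$. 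Then Diagonalize fires at $a$, at $d$, and at $r$, each with exactly two zero children, and $m_M(0)=3$. If your rule were permitted to pick $v_k=r$, you would have $\deg(r)=2$ and $T-r$ with only two components; the Parter--Wiener vertex here must be $a$ or $d$, even though the multiplicity jump $m_{M[T-r]}(0)=m_M(0)+1$ still holds at $r$.

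The missing idea, which the paper uses, is to choose a final zero $v_j$ \emph{farthest from the root} and set $v=v_k$ equal to its parent. Maximality of $d(v_n,v_j)$ forces the standalone run of Diagonalize on each child block $M[T_i]$ of $v_k$ to place a zero only at its root, so each $m_{M[T_i]}(\lambda)$ is $0$ or $1$. This removes the obstacle you anticipated at the parent-side component: if only two of $v_k$'s children are zero roots, then $v_k\neq v_n$ (otherwise $m_M(\lambda)\leq 1$), and since $m_M(\lambda)\geq 2$ the parent-side block $M[T_0]$ must also carry $\lambda$, giving the third component and degree at least three; the per-component bookkeeping is then elementary and needs neither Cauchy interlacing nor re-rooting. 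In short, the farthest-from-root choice is not a refinement to be supplied later; it is the device that makes every subtree contribution equal $0$ or $1$ and thereby makes the count localize.
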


\begin{proof}
Let $T$ be an $n$-vertex tree, $M\in \mathcal{S}(T)$ and suppose that $\lambda \in \mathbb{R}$ is such that $m_{M}(\lambda)\geq2$.

Choose some vertex $v_n$ of $V(T)$ as the root of $T$ and set $x=-\lambda$. Consider an application of \texttt{Diagonalize}$(M,x)$ with root $v_n$. By Theorem~\ref{inertia}(c), at least two diagonal elements of the output matrix $D$ must be 0. Fix a vertex $v_j$ that is farthest from the root such that $d_j=0$, so that $j<n$. Let $v=v_k$ be the parent of $v_j$. Let $T_i$ be the components of $T-v_k$ rooted in each of the children $v_{k,i}$ of $v_k$, where $i \in \{1,\ldots,\ell\}$. If $v_k\neq v_n$, let $T_0$ be the component of $T-v_k$ that contains $v_n$ (and assume it is rooted at $v_n$).

First note that $\ell \geq 2$. Indeed, if $\ell=1$, then $v_j$ would be the only child of $v_k$. However, since $d_j=0$, when the algorithm processes $v_k$, it redefines $d_j$ as 2, contradicting our assumption. In fact, this argument further implies that $v_j$ must have at least one sibling $v_{j'}$ to which the algorithm assigns value $0$ as it processes $v_{j'}$, but then redefines it as 2. 

Consider applications of \texttt{Diagonalize}$(M[T_i],x)$ for $i \in \{1,\ldots,\ell\}$. Our assumption about the distance from $v_j$ to the root implies that 0 can only appear (as the final value) at the root of each such application. On the other hand, the previous paragraph ensures that 0 appears as the final value of at least two of the roots, namely $v_j$ and $v_{j'}$. 

If 0 is the value of at least three of these roots, we conclude that $v_k$ has degree at least three and that $\lambda$ occurs as an eigenvalue of at least three components of $T-v$ by Theorem~\ref{inertia}(b). 

If 0 appears in exactly two of the roots, we conclude that $v_k\neq v_n$, as otherwise one of the 0's would be redefined as 2 when processing $v_n$, and $v_n$ would be assigned a negative value, contradicting the assumption that $m_{M}(\lambda) \geq 2$. The same considerations imply that, when performing \texttt{Diagonalize}$(M,x)$, there are initially two occurrences of 0 at the children of $v_k$, but, when the algorithm processes $v_k$, it replaces one of the zeros by 2, $v_k$ gets a negative value and the edge between $v_k$ and its parent is deleted. Because of this, the values assigned by \texttt{Diagonalize}$(M,x)$ to the remaining vertices of $T$ are not affected by the values on $v_k$'s branch, that is, they are exactly the values assigned by \texttt{Diagonalize}$(M[T_0],x)$. In particular, 0 must appear at least once at the output of \texttt{Diagonalize}$(M[T_0],x)$, thus $\lambda$ is an eigenvalue of $T_0$. Overall, $\lambda$ occurs as an eigenvalue of at least three components of $T-v$.

To conclude the proof, we still need to establish $m_{M[T-v]}(\lambda) = m_M(\lambda) + 1$, where $v=v_k$. But this follows immediately from the argument above. Let $s$ be the number of times that 0 appears at children of $v_k$ (before $v_k$ is processed). After processing $v_k$, one of the zeros becomes 2 and the edge between $v_k$ and its parent (if it exists) is deleted, so that $m_{M}(\lambda)=(s-1)+m_{M[T_0]}(\lambda)$ if $v_k\neq v_n$, and $m_{M}(\lambda)=s-1$ if $v_k=v_n$. On the other hand, $m_{M[T-v]}(\lambda)=s+m_{M[T_0]}(\lambda)$ if $v_k\neq v_n$, and $m_{M[T-v]}(\lambda)=s$ if $v_k=v_n$.
\end{proof}

The following result is proved with similar ideas.
\begin{lemat}\label{proposition}
    Let $T$ be a tree and $M\in \mathcal{S}(T)$. If $v\in V(T)$ is such that $m_{M[T-v]}(\lambda)=m_{M}(\lambda)+1$, for some $\lambda\in\mathbb{R}$, then the following holds when Algorithm Diagonalize is performed for $M$ and $-\lambda$ with root $v$. There is a child $v_j$ of $v$ such that, after processing $v_j$, the algorithm assigns value $d_j=0$.
\end{lemat}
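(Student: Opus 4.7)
The plan is to root $T$ at $v$ and run Algorithm Diagonalize on $M$ with $x = -\lambda$, then compare the count of zero diagonal entries produced in that run with the corresponding count for $M[T-v]$, using Theorem~\ref{inertia}(c) as the bridge. Let $v_1,\ldots,v_\ell$ be the children of $v$ and let $T_i$ be the subtree rooted at $v_i$, so that $T_1,\ldots,T_\ell$ are precisely the components of $T-v$ and $M[T-v]$ is block diagonal with blocks $M[T_i]$. In particular, $m_{M[T-v]}(\lambda) = \sum_{i=1}^\ell m_{M[T_i]}(\lambda)$.

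The key observation is that the edge $\{v_i,v\}$ is touched by the algorithm only when $v_i$ or $v$ itself is processed. Consequently, the computation performed within each $T_i$ while running Diagonalize on $M$ rooted at $v$ coincides with the computation performed by Diagonalize on $M[T_i]$ rooted at $v_i$, for the same value $x=-\lambda$. By Theorem~\ref{inertia}(c), the number of zero entries produced among the vertices of $T_i$ is therefore $m_{M[T_i]}(\lambda)$. Summing over $i$, the number of zeros present on the diagonal at the moment immediately before $v$ is processed is exactly $Z := m_{M[T-v]}(\lambda)$.

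Now I would analyze the processing of $v$ and split into two cases. If every child $v_i$ has $d_{v_i}\neq 0$ at that moment (Case~1 of the algorithm), then $d_v$ is set by the sum formula and the final number of zeros is $Z$ or $Z+1$ depending on whether the sum evaluates to $0$; thus $m_M(\lambda)\in\{Z,Z+1\}$ by Theorem~\ref{inertia}(c). On the other hand, if some child $v_{j_0}$ satisfies $d_{v_{j_0}}=0$ (Case~2), the algorithm overwrites $d_{v_{j_0}}:=2$ and assigns $d_v:=-(m_{v_{j_0}v})^2/2<0$, so the final number of zeros is $Z-1$, giving $m_M(\lambda)=Z-1$. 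Under the hypothesis $m_{M[T-v]}(\lambda)=m_M(\lambda)+1$, i.e.\ $m_M(\lambda)=Z-1$, Case~1 is impossible, so Case~2 must occur. This yields a child $v_j=v_{j_0}$ with $d_j=0$ after it is processed, as required. The main subtlety to be careful about is the first observation above, namely that the edge deletions and value overwrites executed strictly inside a subtree $T_i$ behave identically whether one runs the algorithm on $M$ (rooted at $v$) or on $M[T_i]$ (rooted at $v_i$); everything else is bookkeeping via Sylvester's law of inertia.
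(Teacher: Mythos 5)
Your proof is correct and follows essentially the same approach as the paper: root $T$ at $v$, observe that $\mathrm{Diagonalize}$ run on $M$ restricted to each subtree $T_i$ coincides with the standalone run on $M[T_i]$, convert multiplicities to zero counts via Theorem~\ref{inertia}(c), and conclude the extra zero must sit at a child of $v$. The only cosmetic difference is that the paper deduces immediately that the unmatched zero lies at some root $v_j$ (since values at all non-root vertices of the $T_i$ agree across runs), while you reach the same conclusion via an explicit case split on how the algorithm processes $v$.
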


\begin{proof}
Let $T$ be a rooted tree and $M\in \mathcal{S}(T)$. Let $v\in V(T)$ be such that $m_{M[T-v]}(\lambda)=m_{M}(\lambda)+1$, for some $\lambda\in\mathbb{R}$. Consider $T$ rooted at $v$. Let $T_1,\ldots,T_p$ be the connected components of $T-v$ rooted at the children $v_1,\ldots,v_p$ of $v$. 
By Theorem~\ref{inertia}, $m_{M}(\lambda)$ is given by the number of occurrences of $0$ in the diagonal of the matrix produced by \texttt{Diagonalize}$(M,-\lambda)$ with root $v$. Similarly, $m_{M[T-v]}(\lambda)$ is the sum of the number of occurrences of $0$ in the diagonal of the matrices $D_i$ produced by \texttt{Diagonalize}$(M[T_i],-\lambda)$ with root $v_i$. By hypothesis, this sum is larger than $m_{M}(\lambda)$. In particular, one of the 0´s assigned by \texttt{Diagonalize}$(M[T_i],-\lambda)$ must lie on a vertex $u$ that is assigned a nonzero value by \texttt{Diagonalize}$(M,-\lambda)$. 

On the other hand, the value assigned by \texttt{Diagonalize}$(M[T_i],-\lambda)$ with root $v_i$ to a vertex $w\neq v_i$ is precisely the value assigned to $w$ by \texttt{Diagonalize}$(M,-\lambda)$ with root $v$. As a consequence, the vertex $u$ mentioned in the previous paragraph must be $v_j$ for some $j\in\{1,\ldots,p\}$. This means that, in an application of \texttt{Diagonalize}$(M,-\lambda)$ with root $v$, the algorithm assigns value $d_j=0$ to $v_j$ upon processing $v_j$, and later redefines the value of $d_j$ as 2 when processing its parent $v$.
\end{proof}

\section{Trees of diameter $d$ and branch decompositions} \label{sec:seeds}

In this section, we shall describe an operation known as branch decomposition, which allows us to view trees of diameter $d$ as being generated by a finite number of such trees, which are known as seeds. 

Let $d \geq 1$ be a fixed integer and let $\mathcal{T}_{d}^{(n)}$ be the set of $n$-vertex trees with diameter $d$, where $n \geq 3$. Given a tree $T \in \mathcal{T}_{d}^{(n)}$, there is a natural way to consider it as a rooted tree. 
\begin{defn}[Main root]\label{mainrooteven}
Let $T=(V,E)$ be a tree with diameter $d$. 
\begin{itemize}
    \item[(a)] If $d=2k$ for some $k\in\mathbb{N}$, then $v\in V$ is the \emph{main root} of $T$ if it is the central vertex of a maximum path $P_{2k+1}$ in $T$.
    
    \item[(b)] If $d=2k+1$ for some $k\in\mathbb{N}$, then $e\in E$ is the \emph{main edge} of $T$ if it is the central edge of a maximum path $P_{2k+2}$ in $T$. Each endpoint of $e$ is called a \emph{main root} of $T$.
\end{itemize}
\end{defn}

We note that the main root and the main edge are well defined. For (a), observe that any two distinct copies $Q_1$ and $Q_2$ of $P_{2k+1}$ in $T$ must intersect in a vertex $v$ that is the central vertex of both, otherwise the path $Q$ created by merging the two longest subpaths of $Q_1$ and $Q_2$ joining $v$ to a leaf would have more than $2k+1$ edges, a contradiction. We may similarly argue that any two longest paths in a tree with odd diameter share their central edge.


To prove Theorem~\ref{thm:main}, we will construct classes of trees of diameter $d$ in a recursive way. To this end, we define an operation on rooted trees. Let $p\geq 1$ and consider disjoint trees $T_0,T_1,\ldots,T_p$ rooted at vertices $v_0,v_1,\ldots,v_p$, respectively. We write $T_0\odot (T_1,\ldots,T_p)$ for the tree with vertex set $V=\bigcup_{\ell=0}^{p}V(T_\ell)$ and edge set $E=\cup_{\ell=1}^{p}\{v_0v_\ell\}\cup\bigcup_{\ell=0}^{p}E(T_\ell)$ and we write $T= T_0 \odot (T_1,\ldots,T_p)$ (see Figure~\ref{fig:odot} for $p=3$). If $p=1$, we simplify the notation to $T_0\odot (T_1)= T_0 \odot T_1$.

\newcommand{\Ttres}{
\begin{tikzpicture}[scale=.7,auto=left,every node/.style={circle,scale=0.5}]

\path( 0,0)node[shape=circle,draw,fill=black] (1) {}
      (.5,0)node[shape=circle,draw,fill=black] (2) {}
      (1.5,0)node[shape=circle,draw,fill=black] (3) {}
      (1,1)node[shape=circle,draw,fill=black] (4) {}
      (0,1)node[shape=rectangle,label=left:\Large{$v_{3}$},draw,fill=red] (5) {};
      
      \draw[-](2)--(4);
      \draw[-](3)--(4);
      \draw[-](5)--(4);
      \draw[-](5)--(1);

\end{tikzpicture}
}

\newcommand{\Tzero}{
\begin{tikzpicture}[scale=.7,auto=left,every node/.style={circle,scale=0.5}]

\path( 0,0)node[shape=circle,draw,fill=black] (1) {}
      (1,0)node[shape=circle,draw,fill=black] (2) {}
      (0,.7)node[shape=circle,draw,fill=black] (3) {}
      (1,.7)node[shape=circle,draw,fill=black] (4) {}
      (.5,1.3)node[shape=rectangle,label=left:\Large{$v_{0}$},draw,fill=red] (5) {}
      (.5,2)node[shape=circle,draw,fill=black] (6) {};

      \draw[-](1)--(3);
      \draw[-](3)--(5);
      \draw[-](2)--(4);
      \draw[-](4)--(5);
      \draw[-](5)--(6);

\end{tikzpicture}
}
\newcommand{\Tdois}{
\begin{tikzpicture}[scale=.7,auto=left,every node/.style={circle,scale=0.5}]

\path( 0,0)node[shape=circle,draw,fill=black] (1) {}
      (1,.5)node[shape=rectangle,label=right:\Large{$v_{2}$},draw,fill=red] (2) {};       

      \draw[-](1)--(2);
          
     \end{tikzpicture}
}
\newcommand{\Tum}{
\begin{tikzpicture}[scale=.7,auto=left,every node/.style={circle,scale=0.5}]

\path( 0,0)node[shape=circle,draw,fill=black] (1) {}
      (.5,.5)node[shape=rectangle,label=left:\Large{$v_{1}$},draw,fill=red] (2) {}
      (1,1)node[shape=circle,draw,fill=black] (3) {};

      \draw[-](1)--(2);
      \draw[-](2)--(3);

\end{tikzpicture}
}

\newcommand{\operation}{
\begin{tikzpicture}[scale=.7,auto=left,every node/.style={circle,scale=0.5}]

\path( 0,0)node[shape=circle,draw,fill=black] (1) {}
      (.6,0)node[shape=circle,draw,fill=black] (2) {}
      (.3,1)node[shape=circle,label=left:\Large{$v_{1}$},draw,fill=black] (3) {} 

      (1.5,0)node[shape=circle,draw,fill=black] (4) {}
      (1.5,1)node[shape=circle,label=left:\Large{$v_{2}$},draw,fill=black] (5) {}  

      (2.5,0)node[shape=circle,draw,fill=black] (6) {}
      (2.5,1)node[shape=circle,label=left:\Large{$v_{3}$},draw,fill=black] (7) {} 
      (3.2,0)node[shape=circle,draw,fill=black] (8) {}      
      (3.5,1)node[shape=circle,draw,fill=black] (9) {}
      (3.7,0)node[shape=circle,draw,fill=black] (10) {}

      (1.5,2)node[shape=rectangle,label=left:\Large{$v_{0}$},draw,fill=red] (0) {}
      (1.2,3)node[shape=circle,draw,fill=black] (11) {}
      (2.2,2.5)node[shape=circle,draw,fill=black] (14) {}
      (3,2.5)node[shape=circle,draw,fill=black] (15) {}
      (2.2,3.5)node[shape=circle,draw,fill=black] (12) {}
      (3,3.5)node[shape=circle,draw,fill=black] (13) {};

      \draw[-](1)--(3);
      \draw[-](2)--(3);

      \draw[-](4)--(5);

      \draw[-](6)--(7);
      \draw[-](7)--(9);
      \draw[-](8)--(9);
      \draw[-](9)--(10);

      \draw[-](0)--(11);
      \draw[-](0)--(14);
      \draw[-](0)--(12);
      \draw[-](14)--(15);
      \draw[-](12)--(13);

      \draw[dotted](0)--(3);
      \draw[dotted](0)--(5);
      \draw[dotted](0)--(7);
\end{tikzpicture}
}


\captionsetup[subfigure]{labelformat=empty}

\begin{figure}
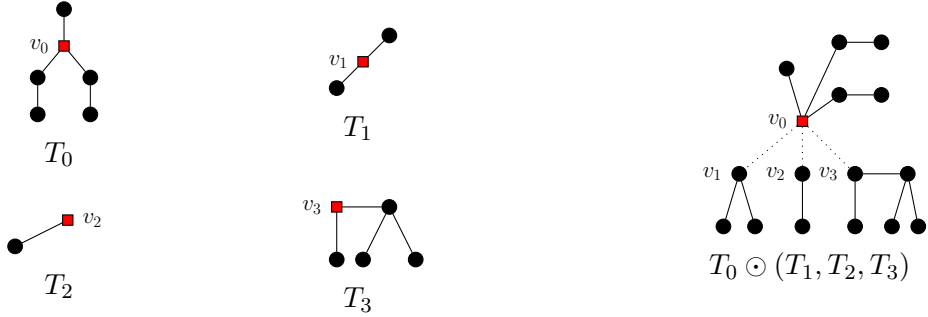

\centering
\begin{minipage}[h]{.5\textwidth}
\centering

 \begin{subfigure}{0.49\textwidth}
     \centering
     \Tzero
     \caption{$T_{0}$}
     \label{fig:a}
 \end{subfigure}
 \hfill
 \begin{subfigure}{0.49\textwidth}
     \centering
     \Tum
     \caption{$T_{1}$}
     \label{fig:b}
 \end{subfigure}
 
 \medskip
 \begin{subfigure}{0.49\textwidth}
     \centering
     \Tdois
     \caption{$T_{2}$}
     \label{fig:c}
 \end{subfigure}
 \hfill
 \begin{subfigure}{0.49\textwidth}
     \centering
     \Ttres
     \caption{$T_{3}$}
     \label{fig:d}
 \end{subfigure}

\end{minipage}\hfill
\begin{minipage}[h]{.5\textwidth}

 \begin{subfigure}{1.05\textwidth}
     \centering
     \operation
     \caption{$T_0\odot (T_1,T_2,T_3)$}
     \label{fig:e}
 \end{subfigure}
\end{minipage}

\caption{The rooted tree $T_0\odot (T_1,T_2,T_3)$. The root of each tree is denoted by a square. \label{fig:odot}}
\end{figure}

The \emph{height} $h(T)$ of a rooted tree $T$ is the distance of the root $v$ to the farthest vertex in $T$, i.e., $h(T)=\max\{d(v,u):u\in V(T)\}$. Note that, when a tree $T$ of diameter $d$ is rooted at a main root, then its height is $h(T)=\ceil{\frac{d}{2}}$.

As mentioned in the introduction, the authors of the book~\cite{JohnsonSaiago2018} consider families of trees constructed by successive applications of operations called \emph{branch duplications}. Given a tree $T$, we say that $T_j$ is a \emph{branch of $T$ at a vertex $v$} if $T_j$ is a component of $T-v$. We can view the branch as a rooted tree with root given by the neighbor of $v$.  An \emph{$s$-combinatorial branch duplication (CBD)} of $T_j$ at $v$ results in a new tree where $s$ copies of $T_j$ are appended to $T$ at $v$ (see Figure
~\ref{fig:unfolding}). A tree $T'$ that is obtained from $T$ by a finite sequence of CBDs is called an \emph{unfolding} of $T$. In this case, we also say that $T$ is a \emph{folding} of $T'$. It is easy to see that for $T$ to be an unfolding of some other tree, then $T$ must contain a vertex $v$ such that $T-v$ has at least two isomorphic branches, by which we mean that there is a root-preserving isomorphism between the two branches. 

\begin{figure}[H]
\centering
\begin{minipage}[h]{.5\textwidth}
\centering 
\begin{subfigure}{0.49\textwidth}
\centering 
     \begin{tikzpicture}[scale=.7,auto=center,every node/.style={circle,scale=0.5}]

\path(0,-.2)node[shape=circle,draw,fill=black] (1) {}
      (1,-.2)node[shape=circle,draw,fill=black] (2) {}
      (.5,.5)node[shape=circle,label=left:\Large{$v_{1}$},draw,fill=black] (3) {}
      (.5,1.5)node[shape=rectangle,label=left:\Large{$v$},draw,fill=red] (0) {}
      (.5,2.2)node[shape=circle,draw,fill=black] (4) {}
      (1.2,1.5)node[shape=circle,draw,fill=black] (5) {}
      (1.9,1.5)node[shape=circle,draw,fill=black] (6) {};

     \draw[-](1)--(3);
     \draw[-](2)--(3);
     \draw[-](3)--(0);
     \draw[-](4)--(0);
     \draw[-](5)--(0);
     \draw[-](5)--(6);

\end{tikzpicture}
     \caption{Tree $T$ of diameter $4$ rooted at $v$.}
 \end{subfigure}
 
 \end{minipage}\hfill
\begin{minipage}[h]{.5\textwidth}
\centering

 \begin{subfigure}{0.49\textwidth}
     \centering
\begin{tikzpicture}[scale=.7,auto=left,every node/.style={circle,scale=0.5}]

\path(0,-.2)node[shape=circle,draw,fill=black] (1) {}
      (1,-.2)node[shape=circle,draw,fill=black] (2) {}
      (.5,.5)node[shape=circle,label=left:\Large{$v_{1}$},draw,fill=black] (3) {}
      (.5,1.5)node[shape=rectangle,label=left:\Large{$v$},draw,fill=red] (0) {}
      (.5,2.2)node[shape=circle,draw,fill=black] (4) {}
      (1.2,1.5)node[shape=circle,draw,fill=black] (5) {}
      (1.9,1.5)node[shape=circle,draw,fill=black] (6) {}

      (1.5,-.2)node[shape=circle,draw,fill=black] (7) {}
      (2.5,-.2)node[shape=circle,draw,fill=black] (8) {}
      (2,.5)node[shape=circle,draw,fill=black] (9) {}

      (3,-.2)node[shape=circle,draw,fill=black] (10) {}
      (4,-.2)node[shape=circle,draw,fill=black] (11) {}
      (3.5,.5)node[shape=circle,draw,fill=black] (12) {};

     \draw[-](1)--(3);
     \draw[-](2)--(3);
     \draw[-](3)--(0);
     \draw[-](4)--(0);
     \draw[-](5)--(0);
     \draw[-](5)--(6);

     \draw[-](7)--(9);
     \draw[-](8)--(9);
     \draw[dotted](9)--(0);

     \draw[-](11)--(12);
     \draw[-](10)--(12);
     \draw[dotted](12)--(0);

\end{tikzpicture}     
\caption{$2$-CBD of $T_1$ (the branch of $T-v$ that contains $v_1$) at $v$.}
 \end{subfigure}
 
 \end{minipage}
\caption{An unfolding of a tree $T$.}\label{fig:unfolding}

\end{figure}
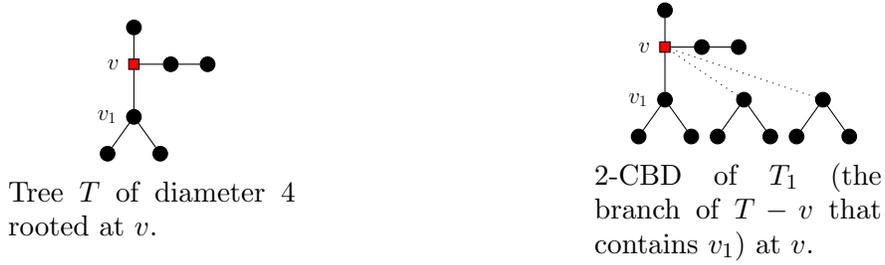

In this paper (as was the case in~\cite{JohnsonSaiago20182018}), we are interested in unfoldings that preserve the diameter. For this reason, a CBD will be only performed on branches of $T-v$ that do not contain any main root of $T$ (as the diameter would increase otherwise). An (unlabelled) tree $T$ of diameter $d$ is said to be a \emph{seed} if it cannot be folded into a smaller tree of diameter $d$. The work in~\cite{JOHNSON2020} shows that, for every positive integer $d$, there is a finite number of seeds of diameter $d$. Moreover, every tree of diameter $d$ is an unfolding of precisely one of these seeds. For example, the path $P_{d+1}$ is the only seed for trees of diameter $d\leq 3$. For diameter $4$ and $5$ there are two and three seeds, respectively, and for $d=6$ there are twelve seeds. 

Note that we can think of unfolding in terms of the operation $\odot$. Let $T$ be a tree and consider a branch $T_j$ of $T$ at a vertex $v$. Let $v_j$ be the root of $T_j$ (i.e. the neighbor of $v$ in $T_j$). Define $T_0=T-T_j$, rooted at $v$. Let $T_j^{(i)}$ be disjoint copies of $T_j$, for $i\in\{1,\ldots,s\}$, whose roots are denoted $v^{(i)}_j$, respectively. It is clear that $T_0\odot(T_j,T^{(1)}_j,\ldots,T^{(s)}_j)$ is an $s$-CBD of $T_j$ at $v$.

As mentioned in the introduction, given $d$, we are interested in three families of trees of diameter $d$, namely the trees generated by seeds $S_d$, $S'_d$ and $S''_d$. We formally define them here in terms of the operation $\odot$. In the definition, it is convenient to construct the seeds as trees that are rooted at a central vertex.
\begin{defn}\label{def_seeds}
Let $S_0=K_1$, $S_1=P_2$ and $S_2=P_3$ be the only trees with a single vertex, two vertices and three vertices, respectively, and consider them as rooted trees such that $S_2$ is rooted at the central vertex. Let $S'_{4}= S_{0}\odot(S_{1},S_{1})$, $S'_{5}= (S_{0}\odot S_{1})\odot(S_{0}\odot S_{1})$ and $S''_{5}= (S_{0}\odot S_{1})\odot(S_{1}\odot S_{1})$.
For $k \geq 2$, define
\begin{itemize}
    \item[(i)] $S_{2k-1}=S_{2k-3}\odot S_{2k-3}$ and $S_{2k}=S_{2k-3}\odot (S_{2k-3},S_{2k-3})$;
    
    \item[(ii)] $S'_{2k+2}= S_{2k-3}\odot(S_{2k-1},S_{2k-1})$ and $S'_{2k+3}= (S_{2k-3}\odot S_{2k-1})\odot(S_{2k-3}\odot S_{2k-1})$;
    
    \item[(iii)] $S''_{2k+3}= (S_{2k-3}\odot S_{2k-1})\odot(S_{2k-1}\odot S_{2k-1})$.
\end{itemize}
\end{defn}

We observe that $S_3$ is the only seed of diameter three, that $S_4$ and $S_4'$ are the only two seeds of diameter four and that $S_5$, $S_5'$ and $S_5''$ are the only three seeds of diameter five. In Figure~\ref{fig:seeds}, we depict $S_d$, $S_d'$ and $S_d''$ for $d\in \{6,7\}$.


\newcommand{\seedseis}{
\begin{tikzpicture}[scale=.7,auto=left,every node/.style={circle,scale=0.3}]

\path( 0,0)node[shape=circle,draw,fill=black] (7) {}
     (0,.5)node[shape=circle,draw,fill=black] (6) {}
     (0,1)node[shape=rectangle,draw,fill=black] (4) {}
     (-.5,1)node[shape=circle,draw,fill=black] (5) {}

       (1,1)node[shape=circle,draw,fill=black] (9) {}
      (.5,1)node[shape=rectangle,draw,fill=black] (8) {}
      (.5,0.5)node[shape=circle,draw,fill=black] (10) {}
     (.5,0)node[shape=circle,draw,fill=black] (11) {}

      (.25,2)node[shape=rectangle,draw,fill=red] (0) {}
      (.25,2.5)node[shape=circle,draw,fill=black] (1) {}
      (.75,2)node[shape=circle,draw,fill=black] (2) {}
     (1.25,2)node[shape=circle,draw,fill=black] (3) {};
     
      
      \draw[-](7)--(6);
      \draw[-](6)--(4);
      \draw[-](4)--(5);

      \draw[-](11)--(10);
      \draw[-](10)--(8);
      \draw[-](8)--(9);

      \draw[-](0)--(1);
      \draw[-](0)--(2);
      \draw[-](2)--(3);

     \draw[dotted](0)--(4);
      \draw[dotted](0)--(8);

     \end{tikzpicture}
}


\newcommand{\seedseisum}{
\begin{tikzpicture}[scale=.7,auto=left,every node/.style={circle,scale=0.3}]

\path( 0,0)node[shape=circle,draw,fill=black] (7) {}
     (0,.5)node[shape=circle,draw,fill=black] (6) {}
     (0,1)node[shape=rectangle,draw,fill=black] (4) {}
     (-.5,1)node[shape=circle,draw,fill=black] (5) {}

       (1,1)node[shape=circle,draw,fill=black] (9) {}
      (.5,1)node[shape=rectangle,draw,fill=black] (8) {}
      (.5,0.5)node[shape=circle,draw,fill=black] (10) {}
     (.5,0)node[shape=circle,draw,fill=black] (11) {}

      (.25,2)node[shape=rectangle,draw,fill=red] (0) {}
      (.25,2.5)node[shape=circle,draw,fill=black] (1) {};
     
      
      \draw[-](7)--(6);
      \draw[-](6)--(4);
      \draw[-](4)--(5);

      \draw[-](11)--(10);
      \draw[-](10)--(8);
      \draw[-](8)--(9);

      \draw[-](0)--(1);

     \draw[dotted](0)--(4);
      \draw[dotted](0)--(8);

     \end{tikzpicture}
}


\newcommand{\seedsete}{
\begin{tikzpicture}[scale=.7,auto=left,every node/.style={circle,scale=0.3}]

\path( 0,.5)node[shape=circle,draw,fill=black] (11) {}
     (0.5,.5)node[shape=circle,draw,fill=black] (10) {}
     (1,.5)node[shape=rectangle,draw,fill=black] (8) {}
     (1,0)node[shape=circle,draw,fill=black] (9) {}
     (1.5,0.5)node[shape=rectangle,draw,fill=black] (12) {}
     (1.5,0)node[shape=circle,draw,fill=black] (13) {}
      (2,0.5)node[shape=circle,draw,fill=black] (14) {}
      (2.5,0.5)node[shape=circle,draw,fill=black] (15) {}

      (0,1.5)node[shape=circle,draw,fill=black] (1) {}
     (0.5,1.5)node[shape=circle,draw,fill=black] (2) {}
     (1,1.5)node[shape=rectangle,draw,fill=red] (0) {}
     (1,2)node[shape=circle,draw,fill=black] (3) {}
     (1.5,1.5)node[shape=rectangle,draw,fill=black] (5) {}
     (1.5,2)node[shape=circle,draw,fill=black] (4) {}
      (2,1.5)node[shape=circle,draw,fill=black] (6) {}
      (2.5,1.5)node[shape=circle,draw,fill=black] (7) {};

      \draw[-](11)--(10);
      \draw[-](10)--(8);
      \draw[-](8)--(9);

      \draw[-](12)--(13);
      \draw[-](12)--(14);
      \draw[-](14)--(15);

       \draw[-](8)--(12);

      \draw[-](1)--(2);
      \draw[-](2)--(0);
      \draw[-](0)--(3);

      \draw[-](0)--(5);
      \draw[-](5)--(4);
      \draw[-](5)--(6);

     \draw[-](6)--(7);
     \draw[dotted](0)--(8);

     \end{tikzpicture}
}


\newcommand{\seedseteum}{
\begin{tikzpicture}[scale=.7,auto=left,every node/.style={circle,scale=0.3}]

\path
     (0.5,.5)node[shape=circle,draw,fill=black] (10) {}
     (1,.5)node[shape=rectangle,draw,fill=black] (8) {}
     (1,0)node[shape=circle,draw,fill=black] (9) {}
     (1.5,0.5)node[shape=rectangle,draw,fill=black] (12) {}
     (1.5,0)node[shape=circle,draw,fill=black] (13) {}
      (2,0.5)node[shape=circle,draw,fill=black] (14) {}

     (1,1.5)node[shape=rectangle,draw,fill=red] (0) {}
     (1,2)node[shape=circle,draw,fill=black] (3) {}
     (1.5,1.5)node[shape=rectangle,draw,fill=black] (5) {}
     (1.5,2)node[shape=circle,draw,fill=black] (4) {}
      (2,1.5)node[shape=circle,draw,fill=black] (6) {}
      (2.5,1.5)node[shape=circle,draw,fill=black] (7) {}

      (1,-.5)node[shape=circle,draw,fill=black] (16) {}
       (1.5,-.5)node[shape=circle,draw,fill=black] (17) {};
     
      \draw[-](10)--(8);
      \draw[-](8)--(9);

      \draw[-](12)--(13);
      \draw[-](12)--(14);

       \draw[-](8)--(12);

      \draw[-](0)--(3);

      \draw[-](0)--(5);
      \draw[-](5)--(4);
      \draw[-](5)--(6);

      \draw[-](9)--(16);
      \draw[-](13)--(17);

     \draw[-](6)--(7);
     \draw[dotted](0)--(8);

     \end{tikzpicture}
}


\newcommand{\seedsetedois}{
\begin{tikzpicture}[scale=.7,auto=left,every node/.style={circle,scale=0.3}]

\path
     (1,.5)node[shape=rectangle,draw,fill=black] (8) {}
     (1,0)node[shape=circle,draw,fill=black] (9) {}
     (1.5,0.5)node[shape=rectangle,draw,fill=black] (12) {}
     (1.5,0)node[shape=circle,draw,fill=black] (13) {}
      (2,0.5)node[shape=circle,draw,fill=black] (14) {}
      (2.5,0.5)node[shape=circle,draw,fill=black] (15) {}

     (1,1.5)node[shape=rectangle,draw,fill=red] (0) {}
     (1,2)node[shape=circle,draw,fill=black] (3) {}
     (1.5,1.5)node[shape=rectangle,draw,fill=black] (5) {}
     (1.5,2)node[shape=circle,draw,fill=black] (4) {}
      (2,1.5)node[shape=circle,draw,fill=black] (6) {}
      (2.5,1.5)node[shape=circle,draw,fill=black] (7) {};

      \draw[-](8)--(9);

      \draw[-](12)--(13);
      \draw[-](12)--(14);
      \draw[-](14)--(15);

       \draw[-](8)--(12);

      \draw[-](0)--(3);

      \draw[-](0)--(5);
      \draw[-](5)--(4);
      \draw[-](5)--(6);

     \draw[-](6)--(7);
     \draw[dotted](0)--(8);

     \end{tikzpicture}
}


\captionsetup[subfigure]{labelformat=empty}

\begin{figure}[H]
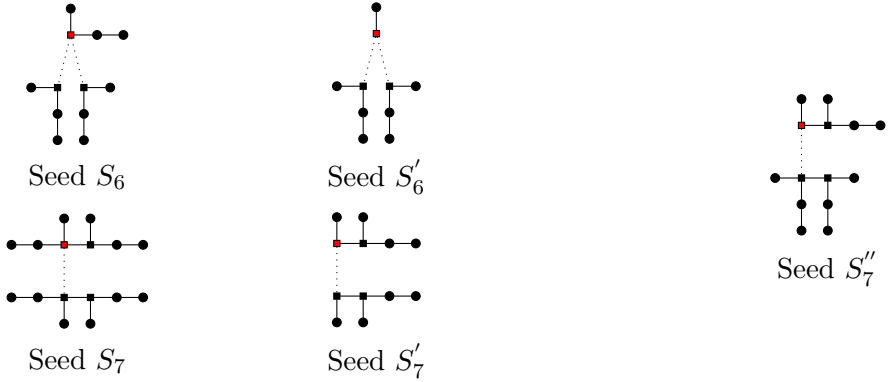

\centering
\begin{minipage}[h]{.5\textwidth}
\centering

 \begin{subfigure}{0.49\textwidth}
     \centering
     \seedseis
     \caption{Seed $S_{6}$}
     \label{fig:s6}
 \end{subfigure}
 \hfill
 \begin{subfigure}{0.49\textwidth}
     \centering
     \seedseisum
     \caption{Seed $S_{6}^{'}$}
     \label{fig:s61}
 \end{subfigure}
 
 \medskip
 \begin{subfigure}{0.49\textwidth}
     \centering
     \seedsete
     \caption{Seed $S_{7}$}
     \label{fig:s7}
 \end{subfigure}
 \hfill
 \begin{subfigure}{0.49\textwidth}
     \centering
     \seedsetedois
     
     \caption{Seed $S_{7}^{'}$}
     \label{fig:s71}
 \end{subfigure}

\end{minipage}\hfill
\begin{minipage}[h]{.5\textwidth}

 \begin{subfigure}{1.05\textwidth}
     \centering
     \seedseteum
     \caption{Seed $S_{7}^{''}$}
\label{fig:s72}
 \end{subfigure}
\end{minipage}
\caption{The seeds of Definition~\ref{def_seeds} for $d\in \{6,7\}$.\label{fig:seeds}}
\end{figure}

It turns out that the entire class of trees that may be generated by unfoldings of seeds in $\{S_d\}_{d \geq 0}$ may also be generated recursively using the operation $\odot$.
\begin{defn}\label{def_T}
Let $\mathcal{T}^\ast$ be the set of trees defined as follows:
\begin{enumerate}
    \item [(i)] $K_1$ is the single tree in $\mathcal{T}^\ast$ with height $0$.
    \item[(ii)] Let $k,p$ be positive integers and consider trees $T_0,T_1,\ldots,T_p \in \mathcal{T}^\ast$, rooted at a main root, with height $k-1$. Then $T=T_0\odot (T_1,\ldots,T_p) \in \mathcal{T}^\ast$.
\end{enumerate}
\end{defn}
Note that, for a tree $T$ defined in (ii), the diameter is $2k-1$ if $p=1$ and the diameter is $2k$ if $p\geq2$. We also observe that $v_0\in T_0$ is the main root of $T= T_0 \odot (T_1,\ldots,T_p)$ and the edge between $T_0$ and $T_1$ is the central edge of $T=T_0\odot T_1$. It is clear that the tree $T$ generated by trees of height $k$ in (ii) has height $k+1$ when it is rooted at a main vertex. 

Recall that, if $S$ is a seed of diameter $d$, $\mathcal{T}(S)$ denotes the set of trees of diameter $d$ that are unfoldings of $S$.
\begin{prop}\label{prop_equivalence}
For any tree $T$ of diameter $d\geq 0$, we have $T \in \mathcal{T}^\ast$ if and only if $T \in \mathcal{T}(S_d)$.
\end{prop}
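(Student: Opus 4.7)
The plan is to prove the biconditional by induction on the height $k = \lceil d/2 \rceil$ of $T$ rooted at a main root. The base case $k=0$ is immediate: $T = K_1 = S_0$ lies in both $\mathcal{T}^\ast$ (by Definition~\ref{def_T}(i)) and $\mathcal{T}(S_0)$. Moreover, each seed $S_d$ itself lies in $\mathcal{T}^\ast$: Definition~\ref{def_seeds} constructs $S_d$ via precisely the operation $\odot$ used in Definition~\ref{def_T}, with factors that are smaller seeds lying in $\mathcal{T}^\ast$ by induction on $d$.

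For the direction $\mathcal{T}(S_d)\subseteq\mathcal{T}^\ast$, the plan is to show that $\mathcal{T}^\ast$ is closed under diameter-preserving CBDs. Writing $T'\in\mathcal{T}^\ast$ as $T'=T_0\odot(T_1,\ldots,T_p)$, a CBD at a vertex $u$ is handled by cases: if $u = v_0$ and the CBD duplicates some $T_i$ with $i \ge 1$, the new tree is again of the form $T_0\odot(T_1,\ldots,T_p,T_{p+1})$ where $T_{p+1}$ is the new copy of $T_i$ and lies in $\mathcal{T}^\ast$; otherwise the CBD modifies some $T_j$ internally, and the inductive hypothesis applied to $T_j$ (at height $k-1$) gives that the modified $T_j$ still lies in $\mathcal{T}^\ast$, so the resulting tree is again expressible in the form of Definition~\ref{def_T}. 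Together with $S_d\in\mathcal{T}^\ast$, this yields the inclusion.

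For the direction $\mathcal{T}^\ast\subseteq\mathcal{T}(S_d)$, take $T\in\mathcal{T}^\ast$ with decomposition $T=T_0\odot(T_1,\ldots,T_p)$ of height $k$. The diameter is $d=2k-1$ if $p = 1$ and $d = 2k$ if $p \ge 2$, matching the shape of $S_d$ in Definition~\ref{def_seeds}, namely $S_{2k-3}\odot S_{2k-3}$ or $S_{2k-3}\odot(S_{2k-3},S_{2k-3})$. By the inductive hypothesis, each $T_i$ is an unfolding of $S_{d_i}$ with $d_i = \diam(T_i) \in \{2k-3, 2k-2\}$ (or $d_i=0$ when $k=1$). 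I would construct $T$ from $S_d$ in two stages: first, apply CBDs at $v_0$ to produce the correct multiset of branches at $v_0$ matching $T$'s; second, within each branch apply the CBDs that unfold the corresponding $S_{2k-3}$-branch of $S_d$ into $T_i$.

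The main obstacle is the second stage when some $T_i$ has diameter $2k-2$, so $T_i$ is (by induction) an unfolding of $S_{2k-2}$ rather than of $S_{2k-3}$. The key observation is that $S_{2k-2}$ can be obtained from $S_{2k-3}$ by a single ``enlarging'' CBD at the root duplicating the subtree rooted at the other main root of $S_{2k-3}$. This CBD is invalid for unfolding $S_{2k-3}$ in isolation (it raises the diameter from $2k-3$ to $2k-2$), but it is valid in the ambient $T$, since the only main roots of $T$ are $v_0$ (and $v_1$ when $p=1$), and the duplicated branch contains none of these. Composing this enlarging CBD with the CBDs that unfold $S_{2k-2}$ to $T_i$ produces the required sequence of diameter-preserving CBDs from $S_d$ to $T$, completing the induction.
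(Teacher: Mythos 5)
Your proof is correct and, in its essentials, matches the paper's argument: both show (a) each seed $S_d$ lies in $\mathcal{T}^\ast$, (b) $\mathcal{T}^\ast$ is closed under diameter-preserving CBDs, and (c) every $T\in\mathcal{T}^\ast$ can be reduced along the $\odot$-decomposition to $S_d$. The only substantive presentational difference is in the inclusion $\mathcal{T}^\ast\subseteq\mathcal{T}(S_d)$: you build the required CBD sequence \emph{forward} from $S_d$ to $T$, using the ``enlarging'' duplication $S_{2k-3}\rightsquigarrow S_{2k-3}\odot(S_{2k-3},S_{2k-3})=S_{2k}$ which is legal inside the ambient $T$ even though it would raise the diameter of the branch in isolation; the paper runs this same step \emph{backward}, folding $T$ down to $S_d$ by observing that any branch whose fold is $S_{2k}$ can be folded further to $S_{2k-3}\odot S_{2k-3}=S_{2k-1}$ without decreasing the ambient diameter. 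These are exact duals relying on the identical observation about when a height-$(k-1)$ branch with the ``even'' seed sits over one with the ``odd'' seed inside a larger tree, so I would not count this as a genuinely different route.
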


\begin{proof}
To show that any tree $T \in \mathcal{T}(S_d)$ lies in $\mathcal{T}^\ast$, we prove the following two claims:
\begin{itemize}
    \item[1.] Any seed $S_d$ lies in $\mathcal{T}^\ast$.
    
    \item[2.] Assume that $T \in \mathcal{T}^\ast$. Then any tree $T'$ obtained from $T$ by a CBD lies in $\mathcal{T}^\ast$.
\end{itemize}

For part 1, note that $S_0 \in \mathcal{T}^\ast$ by Definition~\ref{def_T}(i). We have $S_1=S_0 \odot S_0$ and $S_2=S_0 \odot (S_0 \odot S_0)$, and therefore they are elements of height $1$ in $\mathcal{T}^\ast$ by Definition~\ref{def_T}(ii). For larger values of $d$, we proceed inductively. Note that, for all $k \geq 2$, the seed $S_{2k-3}$ (viewed as the rooted tree of Definition~\ref{def_seeds}) has height $k-1$. As a consequence, assuming that $S_{2k-3} \in \mathcal{T}^\ast$, we have $S_{2k-1}=S_{2k-3}\odot S_{2k-3}$ and $S_{2k}=S_{2k-3}\odot (S_{2k-3},S_{2k-3})$ in $\mathcal{T}^\ast$ by Definition~\ref{def_T}(ii).

We now prove part 2. We proceed by induction on $k$. For each $k\in\mathbb{N}$, we show that, for any tree $T \in \mathcal{T}^\ast$ of diameter $2k-1$ or $2k$, any CBD of a branch of $T$ leads to a tree $T'$ in $\mathcal{T}^\ast$.

The base case $k=1$ is trivial, as all trees with diameter at most two lie in $\mathcal{T}^\ast$. Suppose that the statement holds for all trees in $\mathcal{T}^\ast$ with diameter at most $2k$, and fix $T=T_0\odot(T_1,\ldots,T_p) \in \mathcal{T}^\ast$ with diameter $d\in\{2k+1,2k+2\}$. By Definition~\ref{def_T}, each $T_i$ lies in $\mathcal{T}^\ast$ and has height $k$, so that its diameter is at most $2k$. 

Let $T'$ be the tree produced by an $s$-CBD of a branch $U_j$ of $T$ at a vertex $u$.

\vspace{5pt}

\noindent \textit{Case 1.} Assume that $u\neq v_0$, so that $u\in V(T_i)$ for some $i\in\{0,\ldots,p\}$. By the induction hypothesis, the tree $T'_i$ produced by an $s$-CBD of $U_j$ at $u$ lies in $\mathcal{T}^\ast$. By the definition of branch duplication, $T_i$ and $T_i'$ have the same height. We conclude that $T'\in \mathcal{T}^\ast$ because $T'=T_0\odot(T_1,\ldots,T_{i-1},T'_i,T_{i+1},\ldots,T_p)$, if $i\neq 0$, or $T'=T'_0\odot(T_1,\ldots,T_p)$, if $i= 0$.

\vspace{5pt}

\noindent \textit{Case 2.} Assume that $u=v_0$, so that either the chosen branch is equal to $T_i$ for some $i\in\{1,\ldots,p\}$ or the chosen branch is a branch in $T_0$. In the latter case, we simply repeat the argument of case 1 with $T_0'$ being produced by a CBD in $T_0$. Otherwise, $u=v_0$ and $U_j=T_i$, so that $T'\in \mathcal{T}^\ast$ because $T' = T_0\odot(T_1,\ldots,T_i, T^{(1)}_i,\ldots,T^{(s)}_i,T_{i+1},\ldots,T_p)$.

To conclude the proof of Proposition~\ref{prop_equivalence}, we must show that every tree $T \in \mathcal{T}^\ast$ with diameter $d$ lies in $\mathcal{T}(S_d)$. This will again be done by induction on $k$, the height of the tree $T \in \mathcal{T}^\ast$ (viewed as a rooted tree with root at a main vertex).

For $k=1$, the statement is trivially true, because $S_1$ and $S_2$ are the only seeds with diameter $2k-1=1$ and $2k=2$, respectively. Suppose that for some $k\in\mathbb{N}$ every $T\in \mathcal{T}^\ast$ of diameter $2k-1$ is an unfolding of $S_{2k-1}$ and every $T\in T^\ast$ of diameter $2k$ is an unfolding of $S_{2k}$. 

For the induction step, first fix $T\in \mathcal{T}^\ast$ of diameter $2k+1$. Then, $T=T_0\odot T_1$, for some $T_0,T_1 \in \mathcal{T}^\ast$ of height $k$. By hypothesis, $T_0$ and $T_1$ may be folded until we arrive at  their respective seeds $S^{(0)}$ and $S^{(1)}$, respectively. There are three possibilities:
\begin{enumerate}
    \item [(i)] $S^{(0)}=S_{2k-1}$ and $S^{(1)}=S_{2k-1}$. In this case, $S^{(0)}\odot S^{(1)}=S_{2k-1}\odot S_{2k-1}=S_{2k+1}$ is a folding of $T$, as required.
    \item [(ii)] $S^{(0)}=S_{2k-1}$ and $S^{(1)}=S_{2k}$ (the case $S^{(0)}=S_{2k}$ and $S^{(1)}=S_{2k-1}$ is analogous). In this case, 
    \begin{eqnarray*}
    S^{(0)}\odot S^{(1)}&=&S_{2k-1}\odot S_{2k}\\
    &=& S_{2k-1} \odot \left(S_{2k-3}\odot(S_{2k-3},S_{2k-3}) \right).
    \end{eqnarray*}
    The pair $(S_{2k-3},S_{2k-3})$ may be folded into a single occurrence of $S_{2k-3}$ without decreasing the diameter, so that we get 
    $$S_{2k-1} \odot \left(S_{2k-3} \odot S_{2k-3}\right) = S_{2k-1} \odot S_{2k-1}=S_{2k+1},$$ as required.
    
    \item [(iii)] $S^{(0)}=S_{2k}$ and $S^{(1)}=S_{2k}$. This case is similar to case (ii), as \begin{eqnarray*}
    S^{(0)}\odot S^{(1)}&=&S_{2k}\odot S_{2k}\\
    &=& \left(S_{2k-3}\odot(S_{2k-3},S_{2k-3})\right) \odot \left(S_{2k-3}\odot(S_{2k-3},S_{2k-3}) \right).
    \end{eqnarray*}
    In this case, we can fold each pair $(S_{2k-3},S_{2k-3})$ into a single occurrence of $S_{2k-3}$, and the result follows as above.
\end{enumerate}

To conclude the proof, assume that $T\in \mathcal{T}^\ast$ has diameter $2k+2$. Then, $T=T_0\odot (T_1,\ldots,T_p)$, $p\geq 2$, for some $T_0,T_1,\ldots,T_p \in \mathcal{T}^\ast$ of height $k$. Each $T_i$ may be folded down to $S_{2k-1}$ or to $S_{2k}$, according to its diameter. Each occurrence of $S_{2k}$ may be replaced by $S_{2k-3}\odot(S_{2k-3},S_{2k-3})$, which can be folded to $S_{2k-3} \odot S_{2k-3}=S_{2k-1}$. This means that we reach
$S_{2k-1} \odot (S_{2k-1},\cdots,S_{2k-1})$, where the vector contains at least two terms. If it has more than two terms, additional terms may be removed by foldings of branches $S_{2k-1}$ without decreasing the diameter. When we reach $S_{2k-1} \odot (S_{2k-1},S_{2k-1})$, no further folding can be performed, as it would decrease the diameter. The result follows because $S_{2k-1} \odot (S_{2k-1},S_{2k-1})=S_{2k+2}$. 
\end{proof}

The trees generated by unfoldings of the other seeds in~Definition~\ref{def_seeds} may also be described by decompositions involving the operation $\odot$, as described in the proposition below. The arguments in the proof are quite similar to the ones used to prove Proposition~\ref{prop_equivalence} and is therefore omitted. The interested reader finds the proof of item (ii) in the appendix.  
\begin{prop}\label{equivalence_other_seeds}
    Let $T$ be a tree and $k\geq1$. The following hold:
    \begin{enumerate}
        \item [(i)] $T\in\mathcal{T}(S'_{2k+2})$ if, and only if, there exist $T_1,\ldots,T_p\in \mathcal{T}^\ast, p\geq 2$, 
        of height $k$ and $T_0\in \mathcal{T}^\ast$ of height $k-1$ such that $T=T_0\odot(T_1,\ldots,T_p)$;
        \item [(ii)] $T\in\mathcal{T}(S'_{2k+3})$ if, and only if, there exist $T_1,\ldots,T_p,T'_1,\ldots,T'_q\in \mathcal{T}^\ast, p,q\geq 1$, of height $k$ and $T_0,T'_0\in \mathcal{T}^\ast$ of height $k-1$ such that $T=(T_0\odot(T_1,\ldots,T_p))\odot (T'_0\odot(T'_1,\ldots,T'_q))$;
        \item [(iii)] $T\in\mathcal{T}(S''_{2k+3})$ if, and only if, there exist $T_1,\ldots,T_p,T'_0,\ldots,T'_q\in \mathcal{T}^\ast, p,q\geq 1$, of height $k$ and $T_0\in \mathcal{T}^\ast$ of height $k-1$ such that $T=(T_0\odot(T_1,\ldots,T_p))\odot (T'_0\odot(T'_1,\ldots,T'_q))$.
    \end{enumerate}
    \end{prop}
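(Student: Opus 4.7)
The plan is to adapt the double induction used in Proposition~\ref{prop_equivalence} to each of the three items, always proving both directions. For the "only if" direction we check that (a) the relevant seed itself admits the claimed $\odot$-decomposition into pieces in $\mathcal{T}^\ast$ of the prescribed heights, and (b) every diameter-preserving CBD applied to a tree with such a decomposition yields another tree with a decomposition of the same form. For the "if" direction we fold a tree with the given decomposition down to the seed, invoking Proposition~\ref{prop_equivalence} on each piece.

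For item (i), the seed $S'_{2k+2}=S_{2k-3}\odot(S_{2k-1},S_{2k-1})$ directly has the required form, since $T_0=S_{2k-3}\in\mathcal{T}^\ast$ has height $k-1$ and each $S_{2k-1}\in\mathcal{T}^\ast$ has height $k$ by Proposition~\ref{prop_equivalence}. A CBD performed inside some $T_i$ preserves its height and its $\mathcal{T}^\ast$-membership (by part 2 of the proof of Proposition~\ref{prop_equivalence}); a CBD at the main root $v_0$ that duplicates some $T_i$ simply enlarges the tuple, and a CBD at $v_0$ duplicating a branch internal to $T_0$ changes $T_0$ to another tree of the same height in $\mathcal{T}^\ast$. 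Conversely, given $T=T_0\odot(T_1,\ldots,T_p)$ with the stated heights, Proposition~\ref{prop_equivalence} together with the height constraint forces each $T_i$ to be an unfolding of $S_{2k-1}$ or $S_{2k}$ (and $T_0$ of $S_{2k-3}$ or $S_{2k-2}$). Fold each $T_i$ down to its seed; if that seed is $S_{2k}=S_{2k-3}\odot(S_{2k-3},S_{2k-3})$, collapse one duplicated $S_{2k-3}$ branch to reach $S_{2k-1}=S_{2k-3}\odot S_{2k-3}$, which keeps the height of the branch at $k$. Then collapse duplicates in the tuple down to exactly two copies of $S_{2k-1}$, and fold $T_0$ analogously down to $S_{2k-3}$ (via $S_{2k-2}$ if necessary). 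The resulting tree is $S'_{2k+2}$, so $T\in\mathcal{T}(S'_{2k+2})$.

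Items (ii) and (iii) follow the same template applied independently to each side of the main edge, with the extra observation that a diameter-preserving CBD at a main root cannot duplicate the branch containing the other main root (otherwise the diameter would grow). For (ii), each half has the structure $T_0\odot(T_1,\ldots,T_p)$ with $T_0$ of height $k-1$ and $T_i$ of height $k$ (now allowing $p=1$), and the fold reduces each half to $S_{2k-3}\odot S_{2k-1}$; for (iii), the asymmetry is preserved because on one side $T_0$ has height $k-1$ (reducing to $S_{2k-3}\odot S_{2k-1}$) while on the other side $T'_0$ has height $k$ (reducing to $S_{2k-1}\odot S_{2k-1}$). The main obstacle I expect is bookkeeping: one must carefully track the extra collapse step $S_{2j}\to S_{2j-1}$ in the "if" direction whenever a branch has an even-diameter seed, and verify in the "only if" direction that no admissible CBD can duplicate a branch across the main edge. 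Both issues are resolved by the same arguments as in the proof of Proposition~\ref{prop_equivalence}, so the details transfer with essentially only notational changes.
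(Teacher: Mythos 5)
Your proposal follows the same two-directional strategy as the paper's appendix proof of item (ii): for the ``only if'' direction an induction on the number of CBDs (base case: the seed itself has the decomposition; inductive step: an admissible CBD preserves the decomposition form); for the ``if'' direction, folding each piece of the decomposition down to its own seed via Proposition~\ref{prop_equivalence} and then collapsing to $S'_d$ or $S''_d$. You correctly flag the two points that need care and that the paper handles explicitly: that a diameter-preserving CBD at a main root cannot duplicate the branch containing the other main root, and that the collapse $S_{2j}\to S_{2j-1}$ is legitimate here precisely because the longest paths in these decomposed trees never pass through two children of any one root $v_i$, so the diameter is preserved under that extra fold.
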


To conclude this section, we present a useful connection between a symmetric matrix $M$ with underlying tree $T = T_0 \odot (T_1,\ldots,T_p)$ and induced submatrices corresponding to the subtrees $T_i$.
\begin{lemat}\label{lema_define_max}
Let $T_0,\ldots,T_p$ be rooted trees with roots $v_0,\ldots,v_p$, respectively, where $p\geq 1$. Let $T = T_0 \odot (T_1,\ldots,T_p)$. Given $M_i \in \mathcal{S}(T_i)$, for $i\in\{0,\ldots,p\}$ and $\delta \neq 0$, let $M$ be the matrix $M=(m_{ij})\in \mathcal{S}(T)$ where $M[T_i]=M_i$ and $m_{v_0v_i}=\delta$ for all $i \in \{1,\ldots,p\}$ (see Figure~\ref{lemma_2.10fig}). The following hold:
\begin{enumerate}
    \item [(i)] $\lambda_{\min}(M)<\lambda<\lambda_{\max}(M)$, for all $\lambda\in$ $\bigcup_{\ell=0}^p \Spec(M_\ell)$.
    \item [(ii)] Given $y>\lambda$, for all $\lambda\in \bigcup_{\ell=0}^p \Spec(M_\ell)$, there exists $\delta(y)>0$ such that $\lambda_{\max}(M) = y$.
    \item [(iii)] Given $y<\lambda$, for all $\lambda\in \bigcup_{\ell=0}^p \Spec(M_\ell)$, there exists $\delta(y)>0$ such that $\lambda_{\min}(M) = y$.
\end{enumerate}
\end{lemat}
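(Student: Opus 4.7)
The plan is to prove (i) via a Rayleigh-quotient argument combined with a Perron-Frobenius-type simplicity result for symmetric tree matrices, and then to derive (ii) and (iii) from (i) using continuity of the extreme eigenvalues in $\delta$ together with the intermediate value theorem.

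For (i), I would pick any $\mu \in \Spec(M_\ell)$ with eigenvector $w_\ell \neq 0$ and extend it by zeros to $\tilde w \in \mathbb{R}^{V(T)}$. Because the only off-diagonal entries of $M$ connecting $V(T_\ell)$ to its complement are the weights $\delta$ on the edges $v_0 v_j$, a direct expansion shows that $M\tilde w - \mu \tilde w$ is supported at $v_0$ (when $\ell \neq 0$) or on $\{v_1,\ldots,v_p\}$ (when $\ell = 0$), in either case off the support of $\tilde w$. Hence $\tilde w^T(M-\mu I)\tilde w = 0$ and the Rayleigh quotient of $\tilde w$ equals $\mu$, giving the weak bounds $\lambda_{\min}(M) \le \mu \le \lambda_{\max}(M)$. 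To upgrade to strictness, suppose $\mu = \lambda_{\max}(M)$; then $\tilde w$ maximizes the Rayleigh quotient and is therefore an eigenvector of $M$ for $\lambda_{\max}(M)$. Now I would use the standard signature trick for tree matrices: since $T$ is a tree, propagating signs from $v_0$ along the edges yields a diagonal $\pm 1$ matrix $S$ such that $M' = SMS$ has every edge entry positive; for $c$ large enough, $M' + cI$ is nonnegative and combinatorially irreducible (as $T$ is connected), so Perron-Frobenius produces a strictly positive eigenvector for its spectral radius. Translating back gives an eigenvector of $M$ for $\lambda_{\max}(M)$ with all nonzero entries. By Theorem~\ref{thm:simpleroots} this eigenspace is one-dimensional, so $\tilde w$ must be a scalar multiple of this full-support eigenvector, contradicting the fact that $\tilde w$ vanishes on the nonempty set $V(T)\setminus V(T_\ell)$. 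Applying the same argument to $-M$ yields $\lambda_{\min}(M) < \mu$.

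For (ii) and (iii), let $\mu^\ast := \max_\ell \lambda_{\max}(M_\ell)$ and $\mu_\ast := \min_\ell \lambda_{\min}(M_\ell)$; the hypotheses become $y > \mu^\ast$ and $y < \mu_\ast$, respectively. Writing $M(\delta)$ to emphasize the dependence, the maps $\delta \mapsto \lambda_{\max}(M(\delta))$ and $\delta \mapsto \lambda_{\min}(M(\delta))$ are continuous on $(0,\infty)$ because the entries depend linearly on $\delta$ and extreme eigenvalues are continuous in matrix entries. As $\delta \to 0^+$, $M(\delta)$ tends to the block diagonal $\operatorname{diag}(M_0,M_1,\ldots,M_p)$, so $\lambda_{\max}(M(\delta)) \to \mu^\ast$ and $\lambda_{\min}(M(\delta)) \to \mu_\ast$. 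Evaluating the Rayleigh quotient of $M(\delta)$ on $e_{v_0}+e_{v_1}$ (resp.\ $e_{v_0}-e_{v_1}$) gives a lower (resp.\ upper) bound of the form $(m_{v_0v_0}+m_{v_1v_1} \pm 2\delta)/2$, forcing $\lambda_{\max}(M(\delta)) \to +\infty$ and $\lambda_{\min}(M(\delta)) \to -\infty$ as $\delta \to +\infty$. Combined with the strict bounds from (i), the intermediate value theorem shows that the images of these functions on $(0,\infty)$ cover $(\mu^\ast,\infty)$ and $(-\infty,\mu_\ast)$, producing the required $\delta(y) > 0$.

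The main obstacle is the strictness in (i): Cauchy interlacing and the plain Rayleigh-quotient computation only deliver $\mu \in [\lambda_{\min}(M),\lambda_{\max}(M)]$, and closing the inequality genuinely requires a structural property of tree matrices, namely that the extremal eigenvectors have no zero coordinates. The detour through signature similarity to a nonnegative irreducible matrix, where Perron-Frobenius applies, is what I expect to be the key conceptual step; once (i) is in hand, parts (ii) and (iii) reduce to continuity and the intermediate value theorem.
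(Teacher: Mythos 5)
Your proof is correct but takes a genuinely different route from the paper. The paper proves all three parts via the eigenvalue-location algorithm of Section~\ref{sec:eigenvalue_location}: for part (i) it runs \texttt{Diagonalize}$(M,-\beta)$ with $\beta=\max_\ell\lambda_{\max}(M_\ell)$ rooted at $v_0$, observes that all values produced before processing $v_0$ coincide with those of the subtree runs, and then reads off from the final value $d_{v_0}$ (which turns out to be positive or which forces a child-swap to value~$2$) that there is a positive diagonal entry, hence $\beta<\lambda_{\max}(M)$ by Sylvester inertia; for part (ii) it solves the equation $d_{v_0}=0$ explicitly to exhibit $\delta(y)$ in closed form. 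By contrast, you establish the weak bounds by a Rayleigh-quotient calculation on zero-extended eigenvectors, upgrade to strict inequalities by the signature similarity $M\mapsto SMS$ followed by Perron--Frobenius on $SMS+cI$ (which gives a full-support extremal eigenvector), and then obtain (ii) and (iii) by continuity of $\lambda_{\max},\lambda_{\min}$ in $\delta$ together with the intermediate value theorem. Both proofs are sound; note that in your Rayleigh step the residual $M\tilde w-\mu\tilde w$ is indeed supported off $\operatorname{supp}(\tilde w)$, so $\tilde w^T(M-\mu I)\tilde w=0$ holds regardless of whether the root coordinate $(w_\ell)_{v_\ell}$ vanishes, as you say. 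The trade-off is that the paper's argument is constructive --- it hands you an explicit formula for $\delta(y)$, which the authors exploit later when producing matrices with integral spectrum and in the companion algorithm of Section~\ref{sec:example} --- whereas your IVT argument only yields existence of $\delta(y)$. Your argument is also slightly redundant in invoking Theorem~\ref{thm:simpleroots}, since the Perron--Frobenius step already delivers simplicity of the top eigenvalue, but that does no harm.
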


\begin{proof}
Let $p\geq 1$ and let $T_0,\ldots,T_p$ be rooted trees with roots $v_0,\ldots,v_p$, respectively for a given $p\in\mathbb{N}$. Given $M_i \in \mathcal{S}(T_i)$ and $\delta \neq 0$, define $M\in \mathcal{S}(T)$, where  $T = T_0 \odot (T_1,\ldots,T_p)$, such that $M[T_i]=M_i$ and $m_{v_0v_i}=\delta$ for all $i \in \{1,\ldots,p\}$.

\begin{figure}
$$
\begin{pNiceMatrix}[first-row][first-col]
               & V(T_0)   & V(T_1) & V(T_2) & \cdots       & V(T_p)   \\[0.2cm]
        V(T_0) & M_0      & A_{01} & A_{02} & \cdots & A_{0p}   \\[0.3cm]
        V(T_1) & A^T_{01} & M_1    & \bf{0}      & \cdots & \bf{0}        \\[0.2cm]
        V(T_2) & A^T_{02} &   \bf{0}    & M_2    & \ddots & \vdots   \\[0.2cm]
        \vdots~~       & \vdots   & \vdots & \ddots & \ddots & \bf{0}        \\[0.2cm]
        V(T_p) & A^T_{0p} & \bf{0}      & \bf{0}      & \dots  & M_p      \\
    \end{pNiceMatrix} \text{, where }
A_{0i} = \begin{pNiceMatrix}[first-row][first-col]
            & v_i    &        &        &     \\
        v_0 & \delta & 0      & \cdots & 0   \\
            &   0    & 0      & \cdots & 0   \\
            & \vdots & \vdots & \ddots & 0   \\
            &   0    & 0      & 0      & 0   \\
    \end{pNiceMatrix}
$$
\caption{Matrix $M$ as in the statement of Lemma~\ref{lema_define_max}. The rows and columns of the matrix are ordered according to the tree $T_i$ they come from. \label{lemma_2.10fig}}
\end{figure}

Consider $\beta=\max\{\lambda\in\mbox{Spec}(M_{i}): 0\leq i \leq p\}$, so that $\beta=\lambda_{\max}(M_{\ell})$ for some $0\leq \ell \leq p$. 

We start with part (i). First, assume that $\ell> 0$. Since $\beta=\lambda_{\max}(M_\ell)$, Theorem~\ref{thm:simpleroots} tells us that an application of \texttt{Diagonalize}$(M[T_\ell],-\beta)$ with root $v_\ell$ assigns negative values to all vertices of $T_\ell$ except $v_\ell$, for which the value is 0. This coincides with the values assigned to these vertices in an application of \texttt{Diagonalize}$(M,-\beta)$ with root $v_0$ before processing $v_0$. Since $v_{\ell}$ is a child of $v_{0}$ with value $0$, when the algorithm processes $v_{0}$, it redefines $d_{v_j}=2>0$ and $d_{v_0}<0$ for some child $v_j$ of $v_0$ (possibly $j=\ell$). Therefore, according to Theorem~\ref{inertia}(a), $\beta<\lambda_{\max}(M)$.

Next assume that $\beta=\lambda_{\max}(M_0)>\lambda_{\max}(M_i)$ for all $i>0$. As in the previous case, an application of \texttt{Diagonalize}$(M[T_0],-\beta)$ with root $v_0$ assigns negative values to all vertices of $T_0$ except $v_0$. Moreover, by Theorem~\ref{inertia}(b), applying \texttt{Diagonalize}$(M[T_i],-\beta)$ to each $T_i$ with root $v_i$ assigns negative values to all vertices of $T_i$. As before, all these values coincide with the values assigned by an application of \texttt{Diagonalize}$(M,-\beta)$ with root $v_0$.

When \texttt{Diagonalize}$(M,-\beta)$ processes $v_0$, it assigns the value
\begin{equation}\label{eq1}
    d_{v_{0}}=(m_{v_0v_0}-\beta)-\sum_{w\in C_{T_0}(v_0)}\frac{m_{v_{0}w}^{2}}{d_w}-\sum_{i=1}^{p}\frac{m_{v_{0}v_{i}}^{2}}{d_{v_{i}}},
\end{equation}
where $C_{T_0}(v_0)$ denotes the neighborhood of $v_0$ in $T_0$. Also note that, when we run \texttt{Diagonalize}$(M[T_0],-\beta)$ with root $v_0$, we obtain the final permanent value $$0=(m_{v_0v_0}-\beta)-\sum_{w\in C_{T_0}(v_0)}\frac{m_{v_{0}w}^{2}}{d_w}.$$

Thus, as $d_{v_{i}}<0$ for $1\leq i \leq p$, equation \eqref{eq1} becomes  $$d_{v_{0}}=-\sum_{i=1}^{p}\frac{m_{v_{0}v_{i}}^{2}}{d_{v_{i}}}>0,$$
so that $\beta<\lambda_{\max}(M)$ by Theorem~\ref{inertia}(a). 

To prove that $\lambda_{\min}(M)<\lambda$, for all $\lambda\in \bigcup_{\ell=0}^p \Spec(M_\ell)$, it suffices to apply this result to $-M$, as $\lambda_{\min}(M)=-\lambda_{\max}(-M)$.

To prove part (ii), fix $y>\beta$. We run \texttt{Diagonalize}$(M,-y)$ with root $v_0$. Just before we process $v_{0}$, all its children have been assigned negative values. Then we have
\begin{equation}\label{eq2}
d_{v_{0}}=(m_{v_0v_0}-y)-\sum_{w\in C_{T_0}(v_0)}\frac{m_{v_{0}w}^{2}}{d_w}-\sum_{i=1}^{p}\frac{\delta^{2}}{d_{v_{i}}}.
\end{equation}
Moreover, when we run \texttt{Diagonalize}$(M[T_0],-y)$, it assigns final permanent value $$0>d^{(T_0)}_{v_0}=(m_{v_0v_0}-y)-\sum_{w\in C_{T_0}(v_0)}\frac{m_{v_{0}w}^{2}}{d_w},$$
since $y>\beta\geq\lambda_{\max}(M_{0})$.

To obtain $d_{v_0}=0$ in~\eqref{eq2} we can set 
$$\delta(y)=\sqrt{\frac{\left((m_{v_0v_0}-y)-\sum_{w\in C_{T_0}(v_0)}\frac{m_{v_{0}w}^{2}}{d_w}\right)}{\sum_{i=1}^{p}\frac{1}{d_{v_{i}}}}}.$$
The expression within the square root is positive because $d_{v_{i}}<0$ for $1\leq i \leq p$.

Item (iii) may be derived from item (ii) by considering the matrix $-M$.
\end{proof}

\section{Strongly realizable sets}\label{sec:strongly_realizable}

In this section, we shall state the technical result that implies the validity of Theorem~\ref{thm:main}, namely Theorem~\ref{main_th} below. This technical result allows us to inductively define a set of real numbers (of size $d+1$) that is equal to the distinct eigenvalues in the spectrum of a symmetric matrix $M(T)$ whose underlying graph is a tree $T \in \mathcal{T}^\ast$ with diameter $d$.

\begin{defn}
Let $T=(V,E)$ be a tree with main root $v$. Let $M\in \mathcal{S}(T)$. For each $\lambda\in\DSpec(M)$ we define $$L(M,\lambda) = \min_{u\in V(T)}\{d(v,u):\tilde{d}_u = 0 \},$$ where $\tilde{d}_u$ denotes the final value assigned to $u$ by \texttt{Diagonalize}$(M,-\lambda)$ with root $v$.
\end{defn}

In the definition below, and in the remainder of the paper, we shall use the notation $\{\lambda_1<\cdots<\lambda_{\ell}\}$ to refer to a set  $\{\lambda_1,\ldots,\lambda_{\ell}\}$ of real numbers such that $\lambda_1<\cdots<\lambda_{\ell}$.

\begin{defn}
Given $k\in\mathbb{N}$, a set of real numbers $A = \{\lambda_0<\cdots<\lambda_{2k}\}$  is said to be \emph{strongly realizable} in a family of rooted trees $\mathcal{C}=\{T_i\}_{i\in\mathcal{I}}$, where $\mathcal{I}$ is a set of indices, if the following holds for any $T\in \mathcal{C}$ of height $k$ and root $v$. There exists $M\in \mathcal{S}(T)$ satisfying:
\begin{enumerate}
    \item $\DSpec(M) \subseteq A$;
    \item $L(M,\lambda_{2i})=0$, $0\leq i\leq k$;
    \item $m_{M[T-v]}(\lambda_{2i-1})=m_{M}(\lambda_{2i-1})+1$, $1\leq i\leq k$.
\end{enumerate}
A matrix $M$ with the above properties is said to be a \emph{strong realization} of $A$ in $\mathcal{C}$.
\end{defn}
Note that, by this definition, the values $\lambda_0,\lambda_2,\ldots,\lambda_{2k}$ must be in the spectrum of $M$. 

\begin{example}
We show that the set $\{\lambda_0,\ldots,\lambda_4\}=\{-2,-1,0,1,3\}$ is strongly realizable in $\mathcal{T}^\ast$. To this end, we need to show that the following holds for any $T\in \mathcal{T}^\ast$ with diameter $d\in \{3,4\}$. If $d=3$, there must be a matrix $M$ with underlying tree $T$ whose spectrum contains $-2,0,3$ and at least one of the elements $-1$ and $1$. For $d=4$, the set of distinct eigenvalues must be equal to $\{-2,-1,0,1,3\}$. Moreover, conditions (2) and (3) must hold in both cases. Weights that satisfy these properties are given in Figure~\ref{ex:realizable}. Note that the diameter is equal to $3$ if $p=1$ and equal to $4$ if $p\geq 2$.
The properties (1)-(3) may be easily checked by applying \texttt{Diagonalize}$(M,-\lambda)$ for values of $\lambda$ in this set. We may further verify that $m_{M}(-2)=1$, $m_{M}(-1)=p-1$, $m_{M}(0)=1-p+\sum_{i=1}^p t_i$, $m_{M}(1)=t_0+p-1$ and $m_{M}(3)=1$, so that the multiplicities add up to $|V(T)|$. Observe that $\lambda=-1$ is an eigenvalue of $M$ if and only if the diameter is 4.
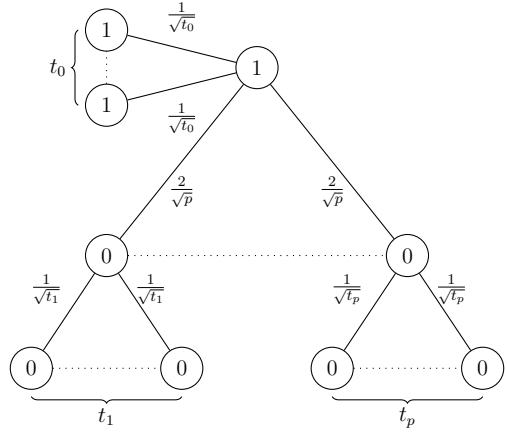
\begin{figure}
\centering
\begin{tikzpicture}
[scale=1,auto=left,every node/.style={circle,scale=0.7}]

\path( 0,0)  node[shape=circle,draw,minimum size=0.8cm,label=above:,inner sep=0] (v1)  {$0$}
     ( 2,0)  node[shape=circle,draw,minimum size=0.8cm,label=above:,inner sep=0] (v2)  {$0$}
     ( 1,1.5)node[shape=circle,draw,minimum size=0.8cm,label=above:,inner sep=0] (v12) {$0$}

     ( 4,0)  node[shape=circle,draw,minimum size=0.8cm,label=above:,inner sep=0] (v3)  {$0$}
     ( 6,0)  node[shape=circle,draw,minimum size=0.8cm,label=above:,inner sep=0] (v4)  {$0$}
     ( 5,1.5)node[shape=circle,draw,minimum size=0.8cm,label=above:,inner sep=0] (v11) {$0$}

     ( 3,4)  node[shape=circle,draw,minimum size=0.8cm,label=above:,inner sep=0] (v13) {$1$}

     (1,3.5) node[shape=circle,draw,minimum size=0.8cm,label=above:,inner sep=0] (v17) {$1$}
     (1,4.5) node[shape=circle,draw,minimum size=0.8cm,label=above:,inner sep=0] (v18) {$1$};

\draw[dotted](v17) -- (v18) ; 
\draw[-,below] (v13) edge node{$\frac{1}{\sqrt{t_0}}$} (v17) ; 
\draw[-,above] (v13) edge node{$\frac{1}{\sqrt{t_0}}$} (v18) ; 

\draw[decoration={brace,raise=5pt},decorate]
  (0.8,3.5) -- node[right=0pt,left,xshift=-0.5em] {$t_0$} (0.8,4.5);
  
\draw[dotted](v1) -- (v2) ; 
\draw[-](v1) edge node{$\frac{1}{\sqrt{t_1}}$} (v12) ; 
\draw[-,pos=0.375,above](v2) edge node{$\frac{1}{\sqrt{t_1}}$} (v12) ;

\draw[decoration={brace,mirror,raise=5pt},decorate]
  (0,-0.2) -- node[right=0pt,below,yshift=-0.5em] {$t_1$} (2,-0.2);
  
\draw[dotted](v3) -- (v4) ; 
\draw[-](v3) edge node{$\frac{1}{\sqrt{t_p}}$}  (v11) ; 
\draw[-,pos=0.375,above](v4) edge node{$\frac{1}{\sqrt{t_p}}$}  (v11) ; 

\draw[decoration={brace,mirror,raise=5pt},decorate]
  (4,-0.2) -- node[right=0pt,below,yshift=-0.5em] {$t_p$} (6,-0.2);

\draw[dotted](v12) -- (v11) ; 
\draw[-,below](v13) edge node{$\frac{2}{\sqrt{p}}$}  (v12) ; 
\draw[-,below](v13) edge node{$\frac{2}{\sqrt{p}}$}  (v11) ;

\end{tikzpicture}
\caption{A weighted tree of diameter $4$ with spectrum $\{-2,-1,0,1,3\}$ that satisfies (2) and (3). Note that choosing $p\geq 2$ and $t_i \geq 1$ for all $i\in \{0,\ldots,p\}$ produces all possible trees in $\mathcal{T}^\ast$.  \label{ex:realizable}
}
\end{figure}

\end{example}

The main technical result in this section is the following. It states that, for every $k \in \mathbb{N}$, there exists a set of real numbers $C_k=\{\lambda_0<\lambda_1<\cdots<\lambda_{2k+1}\}$ such that the subsets $C_k^{(0)}=\{\lambda_0<\lambda_1<\cdots<\lambda_{2k}\}$ and $C_k^{(1)}=\{\lambda_1<\lambda_2<\cdots<\lambda_{2k+1}\}$ are both strongly realizable in $\mathcal{T}^\ast$. Moreover, as long as $\theta,\delta>0$ are sufficiently small, the sets $\{\lambda_0+\theta<\lambda_1<\cdots<\lambda_{2k}\}$ and $\{\lambda_1<\cdots<\lambda_{2k}<\lambda_{2k+1}+\delta\}$ must also be strongly realizable in $\mathcal{T}^\ast$.
\begin{theorem}\label{main_th}
Let $\alpha<\beta$ be real numbers. For every $k\in\mathbb{N}$, there exists a set of real numbers $C_k=\{\lambda_0<\lambda_1<\cdots<\lambda_{2k+1}\}$, where $\lambda_k=\alpha$ and $\lambda_{k+1}=\beta$, such that the following holds for every $T \in \mathcal{T}^\ast$ with height $k$, diameter $d$ and main root $v$. There exist matrices $M_1^{(k)},M_2^{(k)}\in \mathcal{S}(T)$ satisfying the following:
\begin{itemize}
    \item[(i)] $\DSpec(M_1^{(k)}) = \{\lambda_0,\ldots,\lambda_{2k}\}$, if $d=2k$; $\DSpec(M_1^{(k)}) = \{\lambda_0,\ldots,\lambda_{2k}\}\setminus\{\lambda_1\}$, if $d=2k-1$;
    \item[(ii)]  $\DSpec(M_2^{(k)})=\{\lambda_1,\ldots,\lambda_{2k+1}\}$, if $d=2k$; $\DSpec(M_2^{(k)})=\{\lambda_1,\ldots,\lambda_{2k+1}\}\setminus\{\lambda_{2k}\}$, if $d=2k-1$;
    \item[(iii)] $L(M_1^{(k)},\lambda_{2i})=0=L(M_2^{(k)},\lambda_{2i+1})$, for $i\in\{0,\ldots,k\}$;
    \item[(iv)] $m_{M_1^{(k)}[T-v]}(\lambda_{2i-1})=m_{M_1^{(k)}}(\lambda_{2i-1})+1$ and $m_{M_2^{(k)}[T-v]}(\lambda_{2i})=m_{M_2^{(k)}}(\lambda_{2i})+1$, for $i\in\{1,\ldots,k\}$.
\end{itemize}
Moreover, the following are satisfied.
\begin{itemize}
    \item[(v)] Let $y_k=\frac{\beta-\alpha}{2^{k-1}}$. For all $\theta\in(0,y_k)$, there exists $M_{1,\theta}^{(k)}$ such that  $$\DSpec(M_{1,\theta}^{(k)})\subseteq\{\lambda_0+\theta,\lambda_1,\ldots,\lambda_{2k}\},$$ $L(M_{1,\theta}^{(k)},\lambda_{0}+\theta)=0$, and, for all $i\in\{1,\ldots,k\}$, we have $L(M_{1,\theta}^{(k)},\lambda_{2i})=0$ and $m_{M_{1,\theta}^{(k)}[T-v]}(\lambda_{2i-1})=m_{M_{1,\theta}^{(k)}}(\lambda_{2i-1})+1$.
    \item[(vi)] For all $\delta\in(0,y_k)$, there exists $M_{2,\delta}^{(k)}$ such that $$\DSpec(M_{2,\delta}^{(k)})\subseteq\{\lambda_1,\ldots,\lambda_{2k},\lambda_{2k+1}+\delta\},$$ $L(M_{2,\delta}^{(k)},\lambda_{2k+1}+\delta)=0$, and, for all $i\in\{1,\ldots,k\}$, we have $L(M_{2,\delta}^{(k)},\lambda_{2i-1})=0$ and $m_{M_{2,\delta}^{(k)}[T-v]}(\lambda_{2i})=m_{M_{2,\delta}^{(k)}}(\lambda_{2i})+1$.
\end{itemize}
\end{theorem}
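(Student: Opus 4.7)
The natural approach is strong induction on $k$. Given $C_{k-1}=\{\mu_0<\cdots<\mu_{2k-1}\}$ with $\mu_{k-1}=\alpha$ and $\mu_k=\beta$, I would define $C_k=\{\lambda_0<\cdots<\lambda_{2k+1}\}$ by setting $\lambda_i=\mu_{i-1}$ for $i\in\{1,\ldots,2k\}$ and choosing $\lambda_0<\mu_0$ together with $\lambda_{2k+1}>\mu_{2k-1}$ close enough that the new band $y_k=y_{k-1}/2$ fits inside the perturbation intervals supplied by items (v)--(vi) at level $k-1$. The base case $k=1$ corresponds to stars $K_{1,p}$ and is handled by solving a small system for the diagonal entries and edge weights, verifying properties (i)--(vi) by direct application of \texttt{Diagonalize}.

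For the inductive step, write $T\in\mathcal{T}^\ast$ of height $k$ as $T=T_0\odot(T_1,\ldots,T_p)$ with each $T_i$ of height $k-1$. To build $M_1^{(k)}(T)$, I would place $M_2^{(k-1)}(T_0)$ on $T_0$ and $M_1^{(k-1)}(T_j)$ on each $T_j$ for $j\geq 1$, with the option of replacing some of these by their perturbed variants $M_{1,\theta}^{(k-1)}$ or $M_{2,\delta}^{(k-1)}$ when additional flexibility is needed, and then specify the diagonal entry at $v_0$ together with the edge weights $\delta_j$ on the edges $v_0v_j$. The matrix $M_2^{(k)}(T)$ is constructed symmetrically by swapping the roles of $M_1^{(k-1)}$ and $M_2^{(k-1)}$.

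The verification of (i)--(iv) is then carried out by analyzing an application of \texttt{Diagonalize} with root $v_0$ and shift $-\lambda$ for each candidate $\lambda\in C_k$. For odd $\lambda_{2i-1}$ with $i\in\{1,\ldots,k\}$, the inductive identity $L(M_1^{(k-1)},\mu_{2i-2})=0$ forces each $v_j$ with $j\geq 1$ to carry value $0$ before $v_0$ is processed, triggering the branch rule of the algorithm and yielding the multiplicity condition (iv) via Lemma \ref{proposition}. For the extremal eigenvalues $\lambda_0$ and $\lambda_{2k}$, Lemma \ref{lema_define_max}(ii)--(iii) shows that the parameters at $v_0$ can be chosen so that these become the minimum and maximum of $\Spec(M_1^{(k)})$. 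For the intermediate even eigenvalues $\lambda_{2i}$ with $1\leq i\leq k-1$, the inductive identity $L(M_2^{(k-1)},\mu_{2i-1})=0$ makes the partial value of $v_0$ computed from the $T_0$-branch vanish, so that the final value at $v_0$ reduces to $-\sum_j\delta_j^2/d_{v_j}$; appropriately chosen $\delta_j$'s, combined with suitably perturbed subtree matrices $M_{1,\theta}^{(k-1)}(T_j)$ on a chosen subset of the $T_j$'s, allow simultaneous vanishing for all such $i$. Items (v) and (vi) follow by applying Lemma \ref{lema_define_max}(ii)--(iii) to $M_1^{(k)}$ (resp.\ $M_2^{(k)}$) to perturb the extremal eigenvalue by $\theta$ (resp.\ $\delta$) while preserving the rest of the spectrum.

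The main obstacle is precisely the simultaneous vanishing of $d_{v_0}$ at all intermediate even eigenvalues $\lambda_{2i}$. A uniform choice of subtree matrices across all $T_j$ yields values $d_{v_j}$ of identical sign at each such $\lambda_{2i}$, preventing cancellation in $\sum_j\delta_j^2/d_{v_j}$; the necessary flexibility comes from strategically replacing the matrix on a carefully chosen subset of subtrees by a perturbed variant guaranteed by (v)--(vi) at level $k-1$, producing sign-controlled shifts of the $d_{v_j}$'s, followed by a continuity argument in the perturbation parameters. Coordinating these perturbations while preserving the other $L$ and multiplicity conditions is the technical heart of the argument.
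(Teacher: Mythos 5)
Your overall framework (induction on $k$, decompose $T=T_0\odot(T_1,\ldots,T_p)$, place $M_2^{(k-1)}$ on $T_0$ and $M_1^{(k-1)}$ on the $T_i$, handle the new extremes via Lemma~\ref{lema_define_max}) matches the paper's strategy. But there are two genuine gaps, both at steps that the paper handles by a different mechanism than the one you propose.

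First, you identify the ``simultaneous vanishing of $d_{v_0}$ at all intermediate even eigenvalues'' as the technical heart, and propose to achieve it by engineering cancellation in $-\sum_j \delta_j^2/d_{v_j}$. That step does not actually occur in the paper's argument, because it is not needed: the strong-realizability condition (iv) at the lower level gives $m_{M_1[T_j-v_j]}(\lambda)=m_{M_1(T_j)}(\lambda)+1$ for exactly those $\lambda$'s, which by Lemma~\ref{proposition} forces a child of each $v_j$ to carry value $0$ before $v_j$ is processed. The algorithm then sends $v_j$ to a fixed negative value, sets that child to $2$, and \emph{deletes the edge $\{v_j,v_0\}$}. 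So when $v_0$ is finally processed, the sum $\sum_j\delta_j^2/d_{v_j}$ simply does not appear --- processing $v_0$ is identical to processing $v_0$ in \texttt{Diagonalize}$(M_2(T_0),-\lambda)$, and $d_{v_0}=0$ because $L(M_2^{(k-1)}(T_0),\lambda)=0$. This is exactly the content of the $\lambda\in\Lambda_2$ case of Lemma~\ref{multiplicidades}; no cancellation argument, continuity argument, or asymmetric choice of subtree matrices is involved. Your proposed balancing would in any case have to hold simultaneously at $k-1$ distinct eigenvalues with only a bounded number of edge-weight parameters, so it is not a route one would want to take.

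Second, the claim that items (v)--(vi) ``follow by applying Lemma~\ref{lema_define_max}(ii)--(iii) to $M_1^{(k)}$ (resp.\ $M_2^{(k)}$) to perturb the extremal eigenvalue by $\theta$ \ldots while preserving the rest of the spectrum'' does not work. Lemma~\ref{lema_define_max} only adjusts the edge weights at $v_0$, which leaves the trace unchanged; by Lemma~\ref{multiplicidades}(f), $\lambda_{\min}+\lambda_{\max}=a+b$ is fixed once the subtree matrices are fixed, so pushing $\lambda_{\max}$ up by $\theta$ necessarily pushes $\lambda_{\min}$ \emph{down} by $\theta$, whereas (v) requires $\lambda_{\min}$ to move \emph{up}. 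The paper overcomes this by using the \emph{perturbed} lower-level matrices on the subtrees: for $M_{1,\theta}^{(k+1)}$ one places $M_{2,\theta}^{(k)}$ (from item (vi) at level $k$) on $T_0$, which raises $b$ by $\theta$ and therefore raises $\lambda_{\min}$ by $\theta$ once $\lambda_{\max}$ is pinned to $\lambda_{2k+2}^{(k+1)}$; similarly $M_{2,\delta}^{(k+1)}$ uses $M_{1,\theta_k+\delta}^{(k)}$ on $T_0$ and $M_{2,\delta_k}^{(k)}$ on the $T_i$. This is also why the paper's set $C_{k+1}$ is \emph{not} $C_k$ with two points appended: the second-largest point is shifted from $\lambda_{2k+1}^{(k)}$ to $\lambda_{2k+1}^{(k)}+\delta_k$ precisely to make these trace identities close up. Your proposed recursion, which keeps $\mu_{2k-1}$ unchanged as $\lambda_{2k}$ and merely appends two new extremes, will not be compatible with the trace constraint $\lambda_{\min}+\lambda_{\max}=a+b$ when the inductive step is carried out.

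In short, the step of induction cannot avoid using the perturbation machinery of (v)--(vi) already in defining the matrices $M_2^{(k+1)}$ (and $M_{1,\theta}^{(k+1)}$, $M_{2,\delta}^{(k+1)}$), and the intermediate even eigenvalues are handled automatically by branch deletion, not by tuning the $\delta_j$. I would suggest re-reading the $\lambda\in\Lambda_2$ paragraph of the proof of Lemma~\ref{multiplicidades} and tracking how $\delta_k$, $\theta_k$ feed into the trace identity~\eqref{eq:part_f}.
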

We emphasize that, in our proof of Theorem~\ref{main_th}, the set $C_k$ does depend on $k$, in the sense that $C_{k+1}$ is not obtained from $C_k$ by the inclusion of two new elements. The proof of Theorem~\ref{main_th} will be the subject of the next section. We now observe that it immediately implies that Theorem~\ref{thm:main} holds for trees in $\mathcal{T}^\ast$. 

\begin{proof}[Proof of Theorem~\ref{thm:main} for trees in $\mathcal{T}^\ast$]
Let $T\in \mathcal{T}^\ast$ with diameter $d$.
Theorem~\ref{main_th}(i) tells us that it admits a matrix $M(T)$ with $d+1$ distinct eigenvalues that is a realization of a set of $d+1$ real numbers ($C_k\setminus\{\lambda_{2k+1}\}$, if $d=2k$; $C_k\setminus \{\lambda_1,\lambda_{2k+1}\}$, if $d=2k-1$). By Theorem~\ref{thm:LB}, we deduce that $q(T)=d+1$.
\end{proof}

\section{Proof of Theorem~\ref{main_th}}\label{sec:proof_technical}
Theorem~\ref{main_th} will be proved by induction. One of the main ingredients for the step of induction is the following result, which gives a construction that allows us to extend the spectra of a set of matrices to a larger matrix in terms of the operation $\odot$. 

\begin{lemat}\label{multiplicidades}
Let $\mathcal{C}_1$ and $\mathcal{C}_2$ be two families of rooted trees. Let $k_1$ and $k_2$ be nonnegative integers and assume that $A_1=\{\lambda_0<\cdots<\lambda_{2k_1}\}$ and  $A_2=\{\mu_0<\cdots<\mu_{2k_2}\}$ are two strongly realizable sets in $\mathcal{C}_1$ and $\mathcal{C}_2$, respectively, such that $(A_1\cup A_2)\setminus(A_1\cap A_2)=\{a, b\}$, with $a =\min\{\lambda_0,\mu_0\}$ and  $b=\max\{\lambda_{2k_1},\mu_{2k_2}\}$. 
Suppose that there is a partition $A_1 \cap A_2 = \Lambda_1 \cup \Lambda_2$ with the following property. For any trees $T_1\in\mathcal{C}_1$ and $T_2\in\mathcal{C}_2$ with height $k_1$ and $k_2$ and root $v_1$ and $v_2$, respectively, assume that there exist a strong realization $M_1(T_1)$ of $A_1$ and a strong realization $M_2(T_2)$ of $A_2$ such that
\begin{enumerate}
    \item[(i)] For all $\lambda\in \Lambda_1$, we have $L(M_1(T_1),\lambda)=0$ and $m_{M_2[T_2-v_2]}(\lambda)=m_{M_2(T_2)}(\lambda)+1$.
    \item[(ii)] For all $\lambda\in \Lambda_2$, we have $L(M_2(T_2),\lambda)=0$ and $m_{M_1[T_1-v_1]}(\lambda)=m_{M_1(T_1)}(\lambda)+1$.
\end{enumerate}
Then the following holds for a tree $T= T_0 \odot (T_1,\ldots,T_p)$ with main root $v_0$, where $T_1,\ldots,T_p\in\mathcal{C}_1$, $p\geq1$, have height $k_1$, and $T_0\in\mathcal{C}_2$ has height $k_2$. Consider a matrix $M\in \mathcal{S}(T)$ for which $M[T_0]=M_2(T_0)$ and $M[T_i]=M_1(T_i),1\leq i\leq p$ (see Figure~\ref{lemma_3.4fig}). Then there exist $\lambda_{\min},\lambda_{\max}\in \mathbb{R}$ such that the following hold: 
\begin{itemize}
\item[(a)] $\displaystyle{\DSpec(M) =
\begin{cases}
(A_1\cap A_2) \cup \{\lambda_{\min},a,b,\lambda_{\max}\}, & \textrm{ if }p>1 \textrm{ and }a,b\in A_1 ,\\
(A_1\cap A_2) \cup \{\lambda_{\min},a,\lambda_{\max}\}, & \textrm{ if }p>1 \textrm{ and }\{a\}= A_1\cap\{a,b\} ,\\
(A_1\cap A_2) \cup \{\lambda_{\min},b,\lambda_{\max}\}, & \textrm{ if }p>1 \textrm{ and }\{b\}= A_1\cap\{a,b\} ,\\
(A_1\cap A_2) \cup \{\lambda_{\min},\lambda_{\max}\}, & \textrm{ if }p=1 \textrm{ or } (p\geq1 \textrm{ and }a,b\in A_2);
\end{cases}}$
\item[(b)] For $\lambda\in A_1\cap A_2$, $$m_M(\lambda)=m_{M_2(T_0)}(\lambda)+\sum_{i=1}^pm_{M_1(T_i)}(\lambda);$$ 
\item[(c)] $L(M,\lambda)=0$ for all $\lambda\in \Lambda_2$;  
\item[(d)] $m_{M[T-v_0]}(\lambda)=m_{M}(\lambda)+1$, for all $\lambda\in \Lambda_1$;
\item[(e)] For $x\in \{a,b\}$, $$m_M(x)=
\begin{cases}
p-1 \textrm{ and } m_{M[T-v_0]}(x)=m_{M}(x)+1, \textrm{ if } x\in A_1\\
0, \textrm{ if } x\in A_2;
\end{cases}$$
\item[(f)] $\lambda_{\max}+\lambda_{\min}=a+b$.
\end{itemize}
\end{lemat}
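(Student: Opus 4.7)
The strategy is to trace Algorithm Diagonalize applied to $M$ at each candidate eigenvalue $\lambda$, exploiting the fact that the algorithm processes any vertex $u\in T_i$ with $u\neq v_i$ exactly as Diagonalize$(M_1(T_i),-\lambda)$ does with root $v_i$, and likewise for $u\in T_0-v_0$ relative to $M_2(T_0)$ with root $v_0$. The sole discrepancy occurs at each $v_i$: inside $M$, if some child of $v_i$ has value $0$ at the moment $v_i$ is processed, the algorithm modifies that child to $2$, assigns $v_i$ a negative value, and deletes the edge $v_iv_0$; no such edge exists in $M_1(T_i)$ where $v_i$ is itself the root. First I would fix all edge weights $m_{v_0v_i}$ equal to a common $\delta>0$ and establish items (b)--(e) by a case analysis in $\lambda$; these items depend only on qualitative features of the execution and not on the specific value of $\delta$.

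For $\lambda\in\Lambda_2$, hypothesis (i) together with Lemma~\ref{proposition} forces deletion of each edge $v_iv_0$ during processing of $v_i$. Thus when $v_0$ is finally processed its effective children are those in $T_0$ only, and the expression for $d_{v_0}$ reduces to the one yielding $d_{v_0}=0$ in Diagonalize$(M_2(T_0),-\lambda)$, giving (c) and the multiplicity identity in (b). For $\lambda\in\Lambda_1$, hypothesis (ii) gives each $v_i$ final value $0$ without edge deletion, while Lemma~\ref{proposition} applied to $M_2(T_0)$ yields a zero-child of $v_0$ inside $T_0$. Thus $v_0$ has at least $p+1$ zero-children; exactly one becomes $2$ and $v_0$ becomes negative, producing (b) and, via the block decomposition of $M[T-v_0]$, (d).

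For $x\in\{a,b\}$ I would split according to whether $x\in A_1$ or $x\in A_2$. If $x\in A_1$, Theorem~\ref{thm:simpleroots} and its proof guarantee that $x$ is a simple extremum of each $M_1(T_i)$ whose unique zero lies at $v_i$; also $x\notin\Spec(M_2(T_0))$ and the extremality of $x$ force Diagonalize$(M_2(T_0),-x)$ to produce no zeros, by Sylvester's inertia. Hence $v_0$ sees exactly $p$ zero-children (the $v_i$), one becomes $2$, and the remaining $p-1$ yield $m_M(x)=p-1$ with $m_{M[T-v_0]}(x)=p$ by block decomposition. If $x\in A_2$, no zeros appear in the $T_i$ branches and the extra contribution $-\sum_i\delta^2/d_{v_i}$ in $v_0$'s formula forces $d_{v_0}\neq 0$, whence $m_M(x)=0$. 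Combining these multiplicity tallies with Lemma~\ref{lema_define_max}(i) and Theorem~\ref{thm:simpleroots} proves (a): any eigenvalue of $M$ unaccounted for in $(A_1\cap A_2)\cup\{a,b\}$ must lie outside $[a,b]$, and only two such eigenvalues remain, necessarily a simple $\lambda_{\min}<a$ and a simple $\lambda_{\max}>b$.

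Finally, (f) follows from the trace identity $\tr(M)=\tr(M_2(T_0))+\sum_{i=1}^p\tr(M_1(T_i))$: expanding both sides using the multiplicities delivered by (b) and (e), together with the simplicity of the extremes of $\Spec(M_1(T_i))$ and $\Spec(M_2(T_0))$, the contributions from $A_1\cap A_2$ cancel and one obtains $\lambda_{\min}+\lambda_{\max}=a+b$ in each of the four configurations of $a,b$ among $A_1$ and $A_2$. The main obstacle is the careful bookkeeping in (b), uniformly across $\Lambda_1$ and $\Lambda_2$: one must precisely track which zero-child is rewritten at $v_0$ (a $v_i$ or a child inside $T_0$) and how the ensuing edge-deletions propagate, to verify that the predicted multiplicity is attained; once this is in place, (d), (e), and (f) follow by direct counting.
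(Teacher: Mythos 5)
Your proposal is correct and follows essentially the same path as the paper: trace Algorithm Diagonalize on $M$ at each candidate $\lambda$, using Lemma~\ref{proposition} to force edge deletion at each $v_i$ when $\lambda\in\Lambda_2$ and to locate the zero among $v_0$'s children when $\lambda\in\Lambda_1$, handle the extremes $a,b$ via Theorem~\ref{thm:simpleroots} and the root formula at $v_0$, deduce (a) by a multiplicity tally showing exactly two eigenvalues remain unaccounted for, and obtain (f) from $\tr(M)=\tr(M_2(T_0))+\sum_i\tr(M_1(T_i))$. Two small cosmetic notes: you have swapped the references to hypotheses (i) and (ii) (the condition $m_{M_1[T_1-v_1]}(\lambda)=m_{M_1(T_1)}(\lambda)+1$ that forces deletion at $v_i$ for $\lambda\in\Lambda_2$ is in hypothesis (ii), not (i), and symmetrically for $\Lambda_1$), and fixing a common $\delta>0$ for the edge weights $m_{v_0v_i}$ is harmless but unnecessary, since the lemma is stated for any nonzero off-diagonal entries at $v_0$ and your argument, as you yourself observe, never uses that they are equal.
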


\begin{proof}

Let $\mathcal{C}_1$ and $\mathcal{C}_2$ be two families of rooted trees. Fix $k_1, k_2, A_1, A_2$, and $\Lambda_1,\Lambda_2$ satisfying the conditions of the lemma.

Let $T = T_0 \odot (T_1,\ldots,T_p)$, where $T_1,\ldots,T_p\in\mathcal{C}_1$, $p\geq1$, have height $k_1$, and $T_0\in\mathcal{C}_2$ has height $k_2$. Let $v_0,\ldots,v_p$ be the root of $T_0,\ldots,T_p$, respectively, and let $w_1,\ldots,w_q$ be the children of $v_0$ in $T_0$. 

Let $M$ be a matrix as defined in the statement of the lemma, depicted in Figure~\ref{lemma_3.4fig}. Note that the entries associated with edges of the form $v_0v_i$, where $i>0$, have not been assigned any particular values.
\begin{figure}
$$
\begin{pNiceMatrix}[first-row][first-col]
               & V(T_0)   & V(T_1) & V(T_2) &        & V(T_p)   \\[0.2cm]
        V(T_0) & M_2(T_0) & A_{01} & A_{02} & \cdots & A_{0p}   \\[0.3cm]
        V(T_1) & A^T_{01} & M_1(T_1)    & 0      & \cdots & 0        \\[0.2cm]
        V(T_2) & A^T_{02} &   0    & M_1(T_2)    & \ddots & \vdots   \\[0.2cm]
               & \vdots   & \vdots & \ddots & \ddots & 0        \\[0.2cm]
        V(T_p) & A^T_{0p} & 0      & 0      & \dots  & M_1(T_p)      \\
    \end{pNiceMatrix} \text{, where }
A_{0i} = \begin{pNiceMatrix}[first-row][first-col]
            & v_i    &        &        &     \\
        v_0 & \ast & 0      & \cdots & 0   \\
            &   0    & 0      & \cdots & 0   \\
            & \vdots & \vdots & \ddots & 0   \\
            &   0    & 0      & 0      & 0   \\
    \end{pNiceMatrix}
$$
\caption{The matrix $M$ given in the statement of Lemma~\ref{multiplicidades}. The rows and columns of the matrix are ordered according to the tree $T_i$ they come from.}
\label{lemma_3.4fig}
\end{figure}

Let $n=|V(T)|$. Clearly,
\begin{eqnarray}
n&=& \sum_{i=0}^p |V(T_i)|\nonumber \\ &=&\sum_{j=0}^{2k_2}m_{M_2(T_0)}(\mu_j) + \sum_{i=1}^p \sum_{j=0}^{2k_1}m_{M_1(T_i)}(\lambda_j) \nonumber\\
&\stackrel{(*)}{=}&p^{\delta_{a1}}+p^{\delta_{b1}} +\sum_{\lambda\in A_1\cap A_2} \left(m_{M_2(T_0)}(\lambda)+ \sum_{i=1}^p m_{M_1(T_i)}(\lambda)\right).  
\label{eq:multiplicities1}
\end{eqnarray}
In (*), $\delta_{a1}= 1$ if $a\in A_1$ and $\delta_{a1}=0$ otherwise, while $\delta_{b1}= 1$ if $b\in A_1$, $\delta_{b1}=0$ otherwise. The term $p^{\delta_{a1}}+p^{\delta_{b1}}$ comes from the multiplicity of $a$ and $b$ as eigenvalues of $M$, which is equal to one for the corresponding trees because the least and the greatest eigenvalues have multiplicity 1 by Theorem~\ref{thm:simpleroots}.

We use the algorithm of Section~\ref{sec:eigenvalue_location} to compute the spectrum of $M$. First, we prove parts (b), (c) and (d) for elements $\lambda \in A_1 \cap A_2 = \Lambda_1\cup \Lambda_2$. Consider an application of \texttt{Diagonalize}$(M,-\lambda)$ with root $v_0$. Before $v_0$ is processed, everything happens as if we had processed \texttt{Diagonalize}$(M_2(T_0),-\lambda)$ and \texttt{Diagonalize}$(M_1(T_i),-\lambda)$, for $i \in \{1,\ldots,p\}$. When we process the main root $v_0$ we have two cases according to whether $\lambda \in \Lambda_1$ or $\lambda \in \Lambda_2$.

If $\lambda\in\Lambda_2$, we have $m_{M_1[T_i-v_i]}(\lambda)=m_{M_1(T_i)}(\lambda)+1$, and $L(M_2(T_0),\lambda)=0$. By Lemma~\ref{proposition}, each $v_i, i\in\{1,\ldots,p\}$ has a child $u_i$ for which \texttt{Diagonalize}$(M,-\lambda)$ assigns $d_{u_i}=0$ (before processing $v_i$). Then, when $v_i$ is processed, it is assigned a negative value, one of its children with value $0$ (possibly $u_i$) is assigned value $2$, and the edge connecting $v_i$ to $v_0$ is deleted. So, processing $v_0$ in \texttt{Diagonalize}$(M,-\lambda)$ is the same as processing $v_0$ in \texttt{Diagonalize}$(M_2,-\lambda)$. In particular, $d_{v_0}=0$, since  $L(M_2(T_0),\lambda)=0$ by hypothesis. Combining these arguments, we see that the multiplicity of $\lambda$ as an eigenvalue of $M$ satisfies $$m_M(\lambda)=m_{M_2(T_0)}(\lambda)+\sum_{i=1}^pm_{M_1(T_i)}(\lambda).$$
We have seen that $L(M,\lambda)=0$. 
    
Next suppose $\lambda\in\Lambda_1$, so that  $L(M_1(T_i),\lambda)=0$, for all $i\in\{1,\ldots,p\}$, and $m_{M_2[T_0-v_0]}(\lambda)=m_{M_2}(\lambda)+1$. In this case, if we consider \texttt{Diagonalize}$(M,-\lambda)$ just before it processes the root $v_0$, we have $d_{v_i}=0$ for all $i\in\{1,\ldots,p\}$, and by Lemma~\ref{proposition} there is $s\in\{1,\ldots,p\}$, such that the algorithm assigns $d_{w_{s}}=0$ before processing $v_0$. Then, when we process $v_0$, we may suppose that the algorithm assigns $d_{w_s}=2$ and $d_{v_0}<0$, and that all of the remaining children with value $0$ are not modified. This also implies that $$m_M(\lambda)=m_{M_2(T_0)}(\lambda)+\sum_{i=1}^pm_{M_1(T_i)}(\lambda).$$ 
Moreover, it is clear that $L(M,\lambda)=1$ and that $m_{M[T-v_0]}(\lambda)=m_{M}(\lambda)+1$.

Next we prove (e). First assume that $x = a\in A_1$. Since it is the least eigenvalue of $M_1(T_i)$ for all $i$, when applying 
\texttt{Diagonalize}$(M_1(T_i),-x)$, we get $d_{v_1}=\cdots=d_{v_p}=0$, while all other vertices in these trees are assigned a positive value (see Theorem~\ref{thm:simpleroots}). Also, since $x<\lambda_{\min}(M_2(T_0))$, \texttt{Diagonalize}$(M_2(T_0),-x)$ assigns positive values to all entries. After processing $v_0$, one of the value $d_{v_i}$ above becomes 2, while $v_0$ is assigned a negative value. By Theorem~\ref{inertia}, this means that $m_M(x)=p-1$ and that there is a single eigenvalue less than it. In particular, if $p=1$, $x$ is not an eigenvalue of $M$, but satisfies $m_{M[T-v_0]}(x)=m_{M}(x)+1$. For $p\geq 2$, we get $L(M,x)=1$. The case $b\in A_1$ is analogous, with the least eigenvalue being replaced by the greatest eigenvalue.

If $x=a\in A_2$, then when we apply
\texttt{Diagonalize}$(M(T),-x)$, all vertices $v$ except $v_0$ are assigned a positive value. When processing $v_0$, the algorithm produces
\begin{equation}\label{eq:3}
    d_{v_{0}}=d^{(T_0)}_{v_0} - \sum_{i=1}^{p}\frac{m_{v_{0}v_{i}}^{2}}{d_{v_{i}}}.
\end{equation}
Since $L(M[T_0],x)=0$ by hypothesis, we have $d^{(T_0)}_{v_0}=0$. So the expression in~\eqref{eq:3} is negative. Theorem~\ref{inertia} implies that $|V(T)|-1$ eigenvalues of $M$ are greater than $x$ and one eigenvalue is less than $x$.

To prove part (a), summing the multiplicities, we obtain
\begin{eqnarray*}
m_M(a)&+& m_M(b) + \sum_{\lambda\in A_1\cap A_2} m_M(\lambda) \\&=& (p-1)^{\delta_{a1}} +(p-1)^{\delta_{b1}} + \sum_{\lambda\in A_1\cap A_2} \left( m_{M_2(T_0)}(\lambda)+\sum_{i=1}^pm_{M_1(T_i)}(\lambda)\right)\\ 
&\stackrel{\eqref{eq:multiplicities1}}{=}& n-2.
\end{eqnarray*}
This means that there are only two eigenvalues in $\Spec(M)$, namely $\lambda_{\max}(M)$ and  $\lambda_{\min}(M)$, establishing (a). 

Finally, we prove (f) using an argument based on the trace of a matrix. (Recall that the trace $\tr(M)$ of a square matrix $M$ is the sum of its diagonal elements; equivalently, it is the sum of its eigenvalues.) 
By our conclusions in (b) and (e) above, $\tr(M) = \tr(M_2(T_0)) +\sum_{i=1}^p \tr(M_1(T_i))$, we have
\begin{equation}\label{eq:part_f}
\lambda_{\max}+\lambda_{\min}= a+b,
\end{equation}
as required.
\end{proof}

We are now ready to prove Theorem~\ref{main_th}.
\begin{proof}[Proof of Theorem~\ref{main_th}]
We proceed by induction on $k$.
For $k=1$, let $\alpha<\beta\in\mathbb{R}$ and consider the set of real numbers $C_1=\{2\alpha-\beta,\alpha,\beta,2\beta-\alpha\}=\{\lambda_0^{(1)}<\lambda_1^{(1)}<\lambda_2^{(1)}<\lambda_3^{(1)}\}$. Let $T \in \mathcal{T}^\ast$ be a tree of diameter $d\in \{1,2\}=\{2k-1,2k\}$ with main root $v_0$. 

We define $M_1^{(1)}(T):=M_1^{(1)}$ as follows: set all diagonal values of $M_1^{(1)}$ as $\alpha$. By Lemma~\ref{lema_define_max}(ii), we may assign weights to the edges between $v_0$ and its children such that $\beta$ is the maximum eigenvalue of $M_1^{(1)}$.

Applying \texttt{Diagonalize}$(M^{(1)}_1,-\alpha)$, it is easy to see that $m_{M^{(1)}_1}(\alpha)=|V(T)|-2$ (in particular, this number is $0$ if $T$ has diameter $1$) and that $m_{M_1^{(1)}[T-v_0]}(\alpha)=m_{M_1^{(1)}}(\alpha)+1$. As a consequence, $L(M_1^{(1)},\alpha)=1$ if $\alpha$ is an eigenvalue of $M$. Moreover, by Theorem~\ref{inertia}, the two remaining eigenvalues must be $\lambda_{\min}(M_1^{(1)})$ and $\lambda_{\max}(M_1^{(1)})=\beta$. Considering the trace of $M_1^{(1)}$, we obtain $$\lambda_{\min}+(|V(T)|-2)\alpha + \beta= |V(T)|\cdot \alpha.$$
This shows that $\lambda_{\min}=2\alpha-\beta$, so that $\DSpec(M^{(1)}_1)\subseteq\{\lambda_0^{(1)},\lambda_1^{(1)},\lambda_2^{(1)}\}$. By our proof of Theorem~\ref{thm:simpleroots}, we know that $L(M_1^{(1)},\lambda_0)=L(M_1^{(1)},\lambda_2)=0$. This shows that $\{\lambda_0^{(1)},\lambda_1^{(1)},\lambda_2^{(1)}\}$ is strongly realizable for trees of height 1 in $\mathcal{T}^\ast$.

Next define $M_2^{(1)}(T):=M_2^{(1)}$ as follows: set all diagonal values of $M_2^{(1)}$ as $\beta$ and, by Lemma~\ref{lema_define_max}(iii), define the weights of the edges between $v_0$ and its children such that $\alpha$ is the minimum eigenvalue of $M_2^{(1)}$.


Applying \texttt{Diagonalize}$(M^{(1)}_2,-\beta)$, we again see that $m_{M^{(1)}_2}(\beta)=|V(T)|-2$, that $m_{M_2^{(1)}[T-v_0]}(\beta)=m_{M_2^{(1)}}(\beta)+1$ and that the remaining two eigenvalues are $\lambda_{\min}(M^{(1)}_2)=\alpha$ and $\lambda_{\max}(M^{(1)}_2)$. Considering the trace of $M^{(1)}_2$, we obtain $\lambda_{\max}(M^{(1)}_2)=2\beta-\alpha$, so that $\DSpec(M^{(1)}_2)\subseteq\{\lambda_1^{(1)},\lambda_2^{(1)},\lambda_3^{(1)}\}$. Here $L(M_2^{(1)},\lambda_1^{(1)})=L(M_2^{(1)},\lambda_3^{(1)})=0$. As a consequence, $\{\lambda_1^{(1)},\lambda_2^{(1)},\lambda_3^{(1)}\}$ is strongly realizable for trees of height 1 in $\mathcal{T}^\ast$.

So far, we have shown that items (i)-(iv) hold for the base of induction.

To prove (v), let $y_1=\beta-\alpha=\frac{\beta-\alpha}{2^{1-1}}$. Fix $\theta$ such that $0<\theta<y_1$. Observe that this interval is not empty, since $\beta>\alpha$. We define $M_{1,\theta}^{(1)}\in \mathcal{S}(T)$ as follows: the diagonal entries of $M_{1,\theta}^{(1)}$ are the same as $M_1^{(1)}$, except for the entry corresponding to $v_0$, which is $\alpha+\theta$. To ensure that the greatest eigenvalue of $M_{1,\theta}^{(1)}$ is equal to $\beta$, the weight $\omega$ assigned to the edges between $v_0$ and its children is defined by the solution of the following equation obtained by applying \texttt{Diagonalize}$(M_{1,\theta}^{(1)},-\beta)$ with root $v_0$:
\begin{equation}
\label{eq:6}
0 = (\alpha+\theta-\beta) - \sum_{w\neq v_0}\frac{\omega^2}{\alpha-\beta}.
\end{equation}
Note that $- \sum_{w\neq v_0}\frac{\omega^2}{\alpha-\beta}$ is positive, so (\ref{eq:6}) has a real solution $\omega$ if, and only if, $\alpha+\theta-\beta<0$, which is true since $\theta<\beta-\alpha$. As in the previous case, $\alpha$ has multiplicity $|V(T)|-2$ and $m_{M_{1,\theta}^{(1)}[T-v_0]}(\alpha)=m_{M_{1,\theta}^{(1)}}(\alpha)+1$. So far, we have $\DSpec(M_{1,\theta}^{(1)})\subseteq\{\lambda_{\min}(M_{1,\theta}^{(1)}),\lambda_1^{(1)},\lambda_2^{(1)}\}$. Finally, note that
$$\lambda_0^{(1)}+(|V|-2)\alpha+\beta = \tr(M_1^{(1)}) = \tr(M_{1,\theta}^{(1)})-\theta = \lambda_{\min}(M_{1,\theta}^{(1)})+(|V|-2)\alpha+\beta-\theta,$$
from which we obtain $\lambda_{\min}(M_{1,\theta}^{(1)})= \lambda_0^{(1)}+\theta$. As in the previous case, $L(M_{1,\theta}^{(1)},\lambda_2^{(1)})=0$ and $L(M_{1,\theta}^{(1)},\lambda_{0}^{(1)}+\theta)=0$.


To prove (vi), fix $\delta$ such that $\beta-\alpha>\delta>0>\alpha-\beta$. We define $M_{2,\delta}^{(1)}\in \mathcal{S}(T)$ as follows: the diagonal entries of $M_{2,\delta}^{(1)}$ are the same of $M_2^{(1)}$, except for the entry corresponding to $v_0$ which is $\beta+\delta$. To ensure that $\alpha$ is the least eigenvalue of $M_{2,\delta}^{(1)}$, the weight $\omega$ assigned to the edges between $v_0$ and its children is defined by the solution of the following equation obtained by applying \texttt{Diagonalize}$(M_{2,\delta}^{(1)},-\alpha)$ with root $v_0$:
\begin{equation}
\label{eq:7}
0 = (\beta+\delta-\alpha) - \sum_{w\neq v_0}\frac{\omega^2}{\beta-\alpha}.
\end{equation}
Note that $- \sum_{w\neq v_0}\frac{\omega^2}{\beta-\alpha}$ is negative, so (\ref{eq:7}) has a real solution $\omega$ if, and only if, $\beta+\delta-\alpha>0$, which is true since $\delta>\alpha-\beta$. As in the case of $M^{(1)}_2$, $\lambda_2^{(1)}=\beta$ has multiplicity $|V(T)|-2$ and $m_{M_{2,\delta}^{(1)}[T-v_0]}(\beta)=m_{M_{2,\delta}^{(1)}}(\beta)+1$. So far, we have            $\DSpec(M_{2,\delta}^{(1)})\subseteq \{\lambda_1^{(1)},\lambda_2^{(1)},\lambda_{\max}(M_{2,\delta}^{(1)})\}$. Finally, note that $$\alpha+(|V|-2)\beta+\lambda_{3}^{(1)}= \tr(M_2^{(1)}) = \tr(M_{2,\delta}^{(1)})-\delta = \alpha+(|V|-2)\beta+\lambda_{\max}(M_{2,\delta}^{(1)})-\delta,$$
from which we obtain $\lambda_{\max}(M_{2,\delta}^{(1)}) = \lambda_3^{(1)}+\delta$. As in the previous case, $L(M_{2,\delta}^{(1)},\lambda_1^{(1)})=0$ and $L(M_{2,\delta}^{(1)},\lambda_{3}^{(1)}+\delta)=0$.

Now, suppose by induction that for some $k\in\mathbb{N}$ we have a set $C_k=\{\lambda_0^{(k)}<\lambda_1^{(k)}<\cdots<\lambda_{2k+1}^{(k)}\}$ such that, for every $T'\in \mathcal{T}^\ast$ with height $k$ and diameter $d\in \{2k-1,2k\}$, there exist $M_1^{(k)}=M_1^{(k)}(T'),M_2^{(k)}=M_2^{(k)}(T')\in \mathcal{S}(T')$ satisfying the following properties:
\begin{enumerate}
    \item[(i)] $\DSpec(M_1^{(k)}) = \{\lambda_0^{(k)},\ldots,\lambda_{2k}^{(k)}\}$, if $d=2k$; $\DSpec(M_1^{(k)}) = \{\lambda_0^{(k)},\ldots,\lambda_{2k}^{(k)}\}\setminus\{\lambda_1^{(k)}\}$, if $d=2k-1$;
    \item[(ii)]  $\DSpec(M_2^{(k)})=\{\lambda_1^{(k)},\ldots,\lambda_{2k+1}^{(k)}\}$, if $d=2k$; $\DSpec(M_2^{(k)})=\{\lambda_1^{(k)},\ldots,\lambda_{2k+1}^{(k)}\}\setminus\{\lambda_{2k}^{(k)}\}$, if $d=2k-1$;
    \item[(iii)] $L(M_1^{(k)},\lambda_{2i}^{(k)})=0=L(M_2^{(k)},\lambda_{2i+1}^{(k)})$, for $i\in\{0,\ldots,k\}$;
    \item[(iv)] $m_{M_1^{(k)}[T'-v]}(\lambda_{2i-1}^{(k)})=m_{M_1^{(k)}}(\lambda_{2i-1}^{(k)})+1$ and $m_{M_2^{(k)}[T'-v]}(\lambda_{2i}^{(k)})=m_{M_2^{(k)}}(\lambda_{2i}^{(k)})+1$, for $i\in\{1,\ldots,k\}$;
    \end{enumerate}
    Moreover, the following hold:
    \begin{enumerate}
    \item[(v)] Let $y_k=\frac{\beta-\alpha}{2^{k-1}}$. For all $\theta\in(0,y_k)$, there exists $M_{1,\theta}^{(k)}$ such that  $$\DSpec(M_{1,\theta}^{(k)})\subseteq\{\lambda_0^{(k)}+\theta,\lambda_1^{(k)},\ldots,\lambda_{2k}^{(k)}\},$$ $L(M_{1,\theta}^{(k)},\lambda_{0}^{(k)}+\theta)=0$, and, for all $i\in\{1,\ldots,k\}$, we have $L(M_{1,\theta}^{(k)},\lambda_{2i}^{(k)})=0$ and $m_{M_{1,\theta}^{(k)}[T'-v]}(\lambda_{2i-1}^{(k)})=m_{M_{1,\theta}^{(k)}}(\lambda_{2i-1}^{(k)})+1$.
    \item[(vi)] For all $\delta\in(0,y_k)$, there exists $M_{2,\delta}^{(k)}$ such that $$\DSpec(M_{2,\delta}^{(k)})\subseteq\{\lambda_1^{(k)},\ldots,\lambda_{2k}^{(k)},\lambda_{2k+1}^{(k)}+\delta\},$$ $L(M_{2,\delta}^{(k)},\lambda_{2k+1}^{(k)}+\delta)=0$, and, for all $i\in\{1,\ldots,k\}$, we have $L(M_{2,\delta}^{(k)},\lambda_{2i-1}^{(k)})=0$ and $m_{M_{2,\delta}^{(k)}[T'-v]}(\lambda_{2i}^{(k)})=m_{M_{2,\delta}^{(k)}}(\lambda_{2i}^{(k)})+1$.
\end{enumerate}

Fix $\delta_k=\frac{\beta-\alpha}{2^k}\in(0,y_k)$ and $\theta_k=\frac{\beta-\alpha}{2^k}\in(0,y_k)$. Consider the set 
\begin{eqnarray}\label{def_set}
C_{k+1}&=&\{\lambda_0^{(k+1)}<\lambda_1^{(k+1)}<\lambda_2^{(k+1)}<\cdots<\lambda_{2k+1}^{(k+1)}<\lambda_{2k+2}^{(k+1)}<\lambda^{(k+1)}_{2k+3}\}\nonumber\\
&=&\{\lambda_0^{(k)}-\delta_k<\lambda_0^{(k)}<\lambda_1^{(k)}<\cdots<\lambda_{2k}^{(k)}<\lambda_{2k+1}^{(k)}+\delta_k<\lambda_{2k+1}^{(k)}+\delta_k+\theta_k\}
\end{eqnarray}
of cardinality
$2k+4$. We show that $C_{k+1}$ satisfies the required properties.

Let $T \in \mathcal{T}^\ast$ (rooted at a main root) with height $k+1$ and diameter $d\in \{2k+1,2k+2\}$. This means that $T = T_0 \odot (T_1,\ldots,T_p)$, where $p\geq1$, and that each $T_i \in \mathcal{T}^\ast$ has height $k$ and main root $v_i$, for all $i\in\{0,\ldots,p\}$. Recall that $T$ has diameter $2k+1$ if and only if $p=1$.

First, we define a matrix $M_1^{(k+1)}=M_1^{(k+1)}(T)$ with the structure of Figure~\ref{lemma_3.4fig}, where $M_1^{(k+1)}[T_0]=M_2^{(k)}(T_0)$ and $M_1^{(k+1)}[T_i]=M_1^{(k)}(T_i)$ for all $i\in\{1,\ldots,p\}$ are defined using the induction hypothesis. By parts (i) and (ii) of the induction hypothesis and Lemma~\ref{lema_define_max}, we can define the weights on the edges $v_0v_i$ so that $\lambda_{2k+2}^{(k+1)}=\lambda_{2k+1}^{(k)}+\delta_k$ is the maximum eigenvalue of $M_1^{(k+1)}$.

We wish to apply Lemma~\ref{multiplicidades}. By parts (i) to (iv) of the induction hypothesis, the hypotheses of the lemma are satisfied for $A_1=\{\lambda_0^{(k)},\ldots,\lambda_{2k}^{(k)}\}$, $A_2=\{\lambda_1^{(k)},\ldots,\lambda_{2k+1}^{(k)}\}$, $\mathcal{C}_1=\mathcal{C}_2=\mathcal{T}^\ast$, $\Lambda_1=\{\lambda_2^{(k)},\lambda_4^{(k)},\ldots,\lambda_{2k}^{(k)}\}$ and $\Lambda_2=\{\lambda_1^{(k)},\lambda_3^{(k)},\ldots,\lambda_{2k-1}^{(k)}\}$. Observe that $a=\lambda_0^{(k)}\in A_1$ and $b=\lambda_{2k+1}^{(k)} \in A_2$.

Given our choice of maximum eigenvalue, Lemma \ref{multiplicidades}(a)immediately implies that 
\begin{eqnarray}\label{specM1}
\DSpec(M_1^{(k+1)})&=&
\begin{cases}
\{\lambda_{\min},\lambda_0^{(k)},\lambda_1^{(k)},\ldots,\lambda_{2k}^{(k)},\lambda_{2k+1}^{(k)}+\delta_{k}\}, &\text{ if } d=2k+2,\\  \{\lambda_{\min},\lambda_1^{(k)},\ldots,\lambda_{2k}^{(k)},\lambda_{2k+1}^{(k)}+\delta_{k}\} ,&\text{ if } d=2k+1,
\end{cases}\\
&\subseteq& \{\lambda_{\min},\lambda_1^{(k+1)},\ldots,\lambda_{2k+2}^{(k+1)}\}.  \nonumber
\end{eqnarray}
Moreover, $\lambda_{\min} = \lambda_0^{(k)}-\delta_{k}=\lambda_0^{(k+1)}$ by Lemma~\ref{multiplicidades}(f). Therefore (i) is satisfied for $M_1^{(k+1)}$. We also obtain (iii) and (iv) by Lemma \ref{multiplicidades}.

Next, define $M_2^{(k+1)}=M_2^{(k+1)}(T)$ with the structure of Figure~\ref{lemma_3.4fig}, where $M_2^{(k+1)}[T_0]=M_{1,\theta_k}^{(k)}(T_0)$ and 
$M_2^{(k+1)}[T_i]=M_{2,\delta_k}^{(k)}(T_i)$ are defined based on the induction hypothesis.
By Lemma~\ref{lema_define_max}, we can define the weights on the edges $v_0v_i$ so that $\lambda^{(k+1)}_1=\lambda^{(k)}_{0}$ is the minimum eigenvalue of $M_2^{(k+1)}$. The induction hypothesis ensures that the hypotheses of Lemma~\ref{multiplicidades} are satisfied for $A_1=\{\lambda_1^{(k)},\ldots,\lambda_{2k}^{(k)},\lambda_{2k+1}^{(k)}+\delta_k\}$, $A_2=\{\lambda_0^{(k)}+\theta_k,\lambda_1^{(k)},\ldots,\lambda_{2k}^{(k)}\}$, $\mathcal{C}_1=\mathcal{C}_2=\mathcal{T}^\ast$, $\Lambda_1=\{\lambda_1^{(k)},\lambda_3^{(k)},\ldots,\lambda_{2k-1}^{(k)}\}$ and $\Lambda_2=\{\lambda_2^{(k)},\lambda_4^{(k)},\ldots,\lambda_{2k}^{(k)}\}$. Observe that $a=\lambda_0^{(k)}+\theta_k\in A_2$ and $b=\lambda_{2k+1}^{(k)}+\delta_k \in A_1$.

Furthermore, Lemma~\ref{multiplicidades}(a) ensures that
\begin{eqnarray}\label{specM2}
\DSpec(M_2^{(k+1)}) &=&
\begin{cases}
\{\lambda_0^{(k)},\lambda_1^{(k)},\ldots,\lambda_{2k}^{(k)},\lambda_{2k+1}^{(k)}+\delta_{k},\lambda_{\max}\},&\text{ if } d=2k+2,\\  \{\lambda_0^{(k)},\lambda_1^{(k)},\ldots,\lambda_{2k}^{(k)},\lambda_{\max}\},&\text{ if } d=2k+1.
\end{cases}\\
&\subseteq& 
\{\lambda_1^{(k+1)},\lambda_2^{(k+1)},\ldots,\lambda_{2k+2}^{(k+1)},\lambda_{\max}\}\nonumber
\end{eqnarray}
We have $\lambda_{\max} = \lambda_{2k+1}+\delta_{k}+\theta_k=\lambda_{2k+3}^{(k+1)}$ by Lemma \ref{multiplicidades}(f), proving (ii) for $M_{2}^{(k+1)}$. Items (iii) and (iv) also hold by Lemma~\ref{multiplicidades}.

It remains to prove (v) and (vi). We start with (v). Let $y_{k+1}=\delta_k=\frac{\beta-\alpha}{2^k}$ and let $\theta\in(0,y_{k+1})$.
Notice that, since $0<\theta<\delta_k<y_k$, item (vi) of the induction hypothesis applies to $M_{2,\theta}^{(k)}(T_0)$.

We define a matrix $M_{1,\theta}^{(k+1)}=M_{1,\theta}^{(k+1)}(T)$ with the structure of Figure~\ref{lemma_3.4fig}, where the induction hypothesis gives us $M_{1,\theta}^{(k+1)}[T_0]=M_{2,\theta}^{(k)}(T_0)$, $M_{1,\theta}^{(k+1)}[T_i]=M_1^{(k)}(T_i)$ for all $i\in\{1,\ldots,p\}$. By Lemma~\ref{lema_define_max} we can define the weights on the edges $v_0v_i$ such that $\lambda_{2k+2}^{(k+1)}=\lambda_{2k+1}^{(k)}+\delta_k$ is the maximum eigenvalue of $M_{1,\theta}^{(k+1)}$, since $\lambda_{2k+1}^{(k)}+\delta_k>\lambda_{2k+1}^{(k)}+\theta$. We again apply Lemma \ref{multiplicidades}, this time for $A_1=\{\lambda_0^{(k)},\ldots,\lambda_{2k}^{(k)}\}$, $A_2=\{\lambda_1^{(k)},\ldots,\lambda_{2k}^{(k)},\lambda_{2k+1}^{(k)}+\theta\}$, $\mathcal{C}_1=\mathcal{C}_2=\mathcal{T}^\ast$, $\Lambda_1=\{\lambda_2^{(k)},\lambda_4^{(k)},\ldots,\lambda_{2k}^{(k)}\}$ and $\Lambda_2=\{\lambda_1^{(k)},\lambda_3^{(k)},\ldots,\lambda_{2k-1}^{(k)}\}$. Observe that $a=\lambda_0^{(k)}\in A_1$ and $b=\lambda_{2k+1}^{(k)}+\theta \in A_2$.

Part (f) of Lemma~\ref{multiplicidades} implies that $\lambda_{\min}=\lambda_0^{(k)}-\delta_{k}+\theta$. 
Part (a) gives
\begin{eqnarray}\label{specMtheta}
\DSpec(M_{1,\theta}^{(k+1)}) &=&
\begin{cases}\{\lambda_0^{(k)}-\delta_{k}+\theta,\lambda_0^{(k)},\ldots,\lambda_{2k}^{(k)},\lambda_{2k+1}^{(k)}+\delta_{k}\}, &\text{ if } d=2k+2, \\
 \{\lambda_0^{(k)}-\delta_{k}+\theta,\lambda_1^{(k)},\ldots,\lambda_{2k}^{(k)},\lambda_{2k+1}+\delta_{k}^{(k)}\}, &\text{ if } d=2k+1. 
\end{cases} \nonumber\\
&\subseteq& \{\lambda_0^{(k+1)}+\theta,\lambda_1^{(k+1)},\ldots,\lambda_{2k+2}^{(k+1)}\}. 
\end{eqnarray}
The other properties of part (v) also follow from Lemma \ref{multiplicidades}.

For (vi), let $z_{k+1}=y_k-\theta_k=\frac{\beta-\alpha}{2^{k-1}}-\frac{\beta-\alpha}{2^k}=\frac{\beta-\alpha}{2^k}$ and fix $\delta\in(0,z_{k+1})$.
This gives $0<\delta<y_{k+1}\leq y_k-\theta_k$, so that $\delta+\theta_k<y_{k}$ and items (v) and (vi) of the induction hypothesis apply to $M_{1,\theta_k+\delta}^{(k)}(T_0)$ and $M_{2,\delta_k}^{(k)}(T_i)$.

Let $M_{2,\delta}^{(k+1)}=M_{2,\delta}^{(k+1)}(T)$ with the structure of Figure~\ref{lemma_3.4fig}, where we use the induction hypothesis to define $M_{2,\delta}^{(k+1)}[T_0]=M_{1,\theta_k+\delta}^{(k)}(T_0)$ and
$M_{2,\delta}^{(k+1)}[T_i]=M_{2,\delta_k}^{(k)}(T_i)$.
By Lemma~\ref{lema_define_max} we can define the weights on the edges $v_0v_i$ such that $\lambda_1^{(k+1)}=\lambda_{0}^{(k)}$ is the minimum eigenvalue of $M_{2,\delta}^{(k+1)}$.
We apply Lemma \ref{multiplicidades} once more, for $A_1=\{\lambda_1^{(k)},\ldots,\lambda_{2k}^{(k)},\lambda_{2k+1}^{(k)}+\delta_k\}$, $A_2=\{\lambda_0^{(k)}+\theta_k+\delta,\lambda_1^{(k)},\ldots,\lambda_{2k}^{(k)}\}$, $\mathcal{C}_1=\mathcal{C}_2=\mathcal{T}^\ast$, $\Lambda_1=\{\lambda_1^{(k)},\lambda_3^{(k)},\ldots,\lambda_{2k-1}^{(k)}\}$ and $\Lambda_2=\{\lambda_2^{(k)},\lambda_4^{(k)},\ldots,\lambda_{2k}^{(k)}\}$. Observe that $a=\lambda_0^{(k)}+\theta_k+\delta\in A_2$ and $b=\lambda_{2k+1}^{(k)}+\delta_k \in A_1$.

Given our choice of $\lambda_{\min}$, we have $\lambda_{\max} = \lambda_{2k+1}^{(k)}+\delta_{k}+\theta_k+\delta=\lambda_{2k+3}^{(k+1)}+\delta$ by Lemma \ref{multiplicidades}(f). Lemma \ref{multiplicidades}(a) gives
\begin{eqnarray}\label{specMdelta}
\DSpec(M_{2,\delta}^{(k+1)}) &=&
\begin{cases}\{\lambda_0^{(k)},\lambda_1^{(k)},\ldots,\lambda_{2k}^{(k)},\lambda_{2k+1}^{(k)}+\delta_{k},\lambda_{2k+1}^{(k)}+\delta_{k}+\theta_k+\delta\},&\text{ if } d=2k+2,\\
\{\lambda_0^{(k)},\lambda_1^{(k)},\ldots,\lambda_{2k}^{(k)},\lambda_{2k+1}^{(k)}+\delta_{k}+\theta_k+\delta\},&\text{ if } d=2k+1. 
\end{cases}\nonumber\\
&\subseteq& \{\lambda_1^{(k+1)},\ldots,\lambda_{2k+2}^{(k+1)},\lambda_{2k+3}^{(k+1)}+\delta\}.
\end{eqnarray}
The other properties of part (vi) also follow from Lemma \ref{multiplicidades}.

This concludes the step of induction, establishing Theorem~\ref{main_th}.
\end{proof}

\begin{remark}\label{relation_ck}
Note that the proof of Theorem~\ref{main_th} shows how the sets $C_k$ and $C_{k+1}$ relate to each other. Indeed, if $C_{k}=\{\lambda_{0},\ldots,\lambda_{2k+1}\}$ then $C_{k+1}=\{\lambda_{0}-\delta_{k},\lambda_{0},\ldots,\lambda_{2k},\lambda_{2k+1}+\delta_{k},\lambda_{2k+1}+\delta_{k}+\theta_{k}\}$
\end{remark}

\section{Proof of Theorem~\ref{thm:main} for other seeds}\label{sec:other_seeds}

To conclude the proof of Theorem~\ref{thm:main}, we prove it for unfoldings of the seeds $S'_d$ and $S''_d$. 

\begin{proof}[Proof of Theorem~\ref{thm:main}]
Let $T$ be an unfolding of $S_d'$ or $S_d''$ for some $d\geq 4$. Assume that $d\in\{2k+2,2k+3\}$ for some $k\geq 1$. Given arbitrary $\alpha<\beta$, we apply Theorem~\ref{main_th} (see also Remark~\ref{relation_ck}) to obtain sets $C_{k-1}=\{\lambda_{0},\ldots,\lambda_{2k-1}\}$ and $C_{k}=\{\lambda_{0}-\delta_{k-1},\lambda_{0},\ldots,\lambda_{2k-2},\lambda_{2k-1}+\delta_{k-1},\lambda_{2k-1}+\delta_{k-1}+\theta_{k-1}\}$ that satisfy conditions (i)-(vi) for trees $T^\ast \in \mathcal{T}^\ast$ of height $k-1$ and $k$, respectively.

In our construction, we consider each of the three possibilities for seeds $S_d'$ and $S_d''$ in Definition~\ref{def_seeds}.

\vspace{5pt}

\noindent \textbf{Case 1:} If $d=2k+2$ for some $k\geq 1$ and $T$ is an unfolding of $S_d'$, by Proposition~\ref{equivalence_other_seeds}(i), there exist $T_0\in \mathcal{T}^\ast$ of height $k-1$ and $T_1,\ldots,T_p\in \mathcal{T}^\ast$ of height $k$, for some $p\geq 2$, such that $T=T_0\odot(T_1,\ldots,T_p)$. 

We define a matrix $M\in \mathcal{S}(T)$ as follows: $M[T_0]=M_1^{(k-1)}(T_0)$ and $M[T_i]=M_1^{(k)}(T_i)$, for $i\in\{1,\ldots,p\}$, where $M_1$ denotes a matrix that satisfies (i) in Theorem~\ref{main_th}. To compute the spectrum of $M$ we apply Lemma~\ref{multiplicidades}. To this end, let $\mathcal{C}_1=\mathcal{C}_2=\mathcal{T}^\ast$ (where each tree is rooted at a main root). Let $k_1=k-1$, $k_2=k$, and consider $A_1=C_{k}\setminus\{\lambda_{2k-1}+\delta_{k-1}+\theta_{k-1}\}$ and $A_2=C_{k-1}\setminus\{\lambda_{2k-1}\}$. Note that $A_1 \cap A_2=\{\lambda_0,\ldots,\lambda_{2k-2}\}$ and that $(A_1 \cup A_2) \setminus (A_1 \cap A_2)=\{\lambda_0-\delta_{k-1},\lambda_{2k-1}+\delta_{k-1}\}$, so $a=\lambda_0-\delta_{k-1}$, $  b=\lambda_{2k-1}+\delta_{k-1}$. Set $\Lambda_1=\{\lambda_1,\lambda_3,\ldots,\lambda_{2k-3}\}$ and $\Lambda_2=\{\lambda_0,\lambda_2,\ldots,\lambda_{2k-2}\}$. By Theorem~\ref{main_th}(i), $M_1^{(k-1)}(T_0)$ is a strong realization of $A_1$ and $M_1^{(k)}(T_i)$ is a strong realization of $A_2$ for each $i\geq 1$.
By Theorem~\ref{main_th}(iii) and (iv)\footnote{Note that the same elements of $A_1\cap A_2$ play different roles with respect to $M_1^{(k-1)}(T_0)$ and $M_1^{(k)}(T_i)$, as the eigenvalues with even index with respect to the first matrix have odd index with respect to the second, and vice-versa.}, the following hold:
\begin{itemize}
    \item[(I)] For all $\lambda\in \Lambda_1$, $L(M_1^{(k)}(T_i),\lambda)=0$ and $m_{M_1^{(k-1)}[T_0-v_0]}(\lambda)=m_{M_1^{(k-1)}}(\lambda)+1$.
    
    \item[(II)] For all $\lambda \in \Lambda_2$, $L(M_1^{(k-1)}(T_0),\lambda)=0$
    and $m_{M_1^{(k)}[T_i-v_i]}(\lambda)=m_{M_1^{(k)}(T_i)}(\lambda)+1$.
\end{itemize}
Having verified the hypotheses, we are now ready to apply Lemma~\ref{multiplicidades}. Since $p>1$ and $a,b \in A_1$, Lemma~\ref{multiplicidades}(a) tells us that there exist $\lambda_{\min},\lambda_{\max}\in\mathbb{R}$ such that 
\begin{equation}\label{eq:dspec}
\DSpec(M) = \{\lambda_{\min},\lambda_0-\delta_{k-1},\lambda_0,\ldots,\lambda_{2k-2},\lambda_{2k-1}+\delta_{k-1},\lambda_{\max}\}.
\end{equation}
In particular, $|\DSpec(M)|=2k+3=d+1$, so that $q(T)=d+1$ in this case.

\vspace{5pt}

\noindent \textbf{
Case 2:} If $d=2k+3$ and $T$ is an unfolding of $S'_d$, by Proposition~\ref{equivalence_other_seeds}(ii), there exist $T_1,\ldots,T_p,T'_1,\ldots,T'_q\in \mathcal{T}^\ast, p,q\geq 1$, of height $k$ and $T_0,T'_0\in \mathcal{T}^\ast$ of height $k-1$ such that $T= (T_0\odot(T_1,\ldots,T_p))\odot (T'_0\odot(T'_1,\ldots,T'_q))$.

We define the matrix $M\in \mathcal{S}(T)$ in two parts. For the part that is related to $\tilde{T}=T_0\odot(T_1,\ldots,T_p)$, set
$M[T_0]=M_1^{(k-1)}(T_0)$ and $M[T_i]=M_1^{(k)}(T_i)$ for $i\in\{1,\ldots,p\}$, where $M_1$ denotes a matrix that satisfies (i) in Theorem~\ref{main_th}. By Lemma~\ref{lema_define_max}, we define the weights on the edges $v_0v_i$ so that $\lambda_{2k-1}+\delta_{k-1}+\theta_{k-1}$ is the maximum eigenvalue of $M[\tilde{T}]$. Note that, for $M[\tilde{T}]$, the hypotheses of Lemma~\ref{multiplicidades} are satisfied for the same sets $A_1$, $A_2$, $\Lambda_1$, $\Lambda_2$ defined in case 1 (for the same reasons).
    Then, by Lemma~\ref{multiplicidades}, there exist $\tilde{\lambda}_{\min},\tilde{\lambda}_{\max}=\lambda_{2k-1}+\delta_{k-1}+\theta_{k-1}\in\mathbb{R}$ such that 
    $$\DSpec (M[\tilde{T}]) \subseteq \{\tilde{\lambda}_{\min},\lambda_0-\delta_{k-1},\lambda_0,\ldots,\lambda_{2k-2},\lambda_{2k-1}+\delta_{k-1},\lambda_{2k-1}+\delta_{k-1}+\theta_{k-1}\}.$$ 
Moreover, $\tilde{\lambda}_{\min},\lambda_0,\ldots,\lambda_{2k-2},\lambda_{2k-1}+\delta_{k-1}+\theta_{k-1}$ satisfy Lemma~\ref{multiplicidades}(c), while the values $ \lambda_0-\delta_{k-1},\lambda_1,\ldots,\lambda_{2k-3},\lambda_{2k-1}+\delta_{k-1}$ satisfy Lemma~\ref{multiplicidades}(d).

For the part that is related to $\tilde{T}'=T_0'\odot(T_1',\ldots,T_p')$, we set $M[T'_0]=M_{2,\delta_{k-1}}^{(k-1)}(T'_0)$ and $M[T'_i]=M_2^{(k)}(T'_i)$ for all $i\in\{1,\ldots,q\}$, where $M_2$ denotes the matrix that satisfies (ii) and $M_{2,\delta}$ denotes the matrix that satisfies (vi) in Theorem~\ref{main_th}. By Lemma~\ref{lema_define_max}, we may define the weights on the edges $\{v'_0,v'_i\}$ such that $\lambda_{0}-\delta_{k-1}$ is the minimum eigenvalue of $M[\tilde{T}']$.
To compute the spectrum of $M[\tilde{T}']$ we apply Lemma~\ref{multiplicidades}. To this end, let $\mathcal{C}_1=\mathcal{C}_2=\mathcal{T}^\ast$ (where each tree is rooted at a main root). Let $k_1=k-1$, $k_2=k$, and consider $A_1=C_{k}\setminus\{\lambda_{0}-\delta_{k-1}\}$ and $A_2=(C_{k-1}\cup\{\lambda_{2k-1}+\delta_{k-1}\})\setminus\{\lambda_{0}, \lambda_{2k-1}\}$.
Note that $A_1 \cap A_2=\{\lambda_{1},\ldots,\lambda_{2k-2},\lambda_{2k-1}+\delta_{k-1}\}$, that $(A_1 \cup A_2) \setminus (A_1 \cap A_2)=\{\lambda_0,\lambda_{2k-1}+\delta_{k-1}+\theta_{k-1}\}$, and hence $a=\lambda_0$, $b=\lambda_{2k-1}+\delta_{k-1}+\theta_{k-1}$. Set $\Lambda_1=\{\lambda_2,\lambda_4,\ldots,\lambda_{2k-2}\}$ and $\Lambda_2=\{\lambda_1,\lambda_3,\ldots,\lambda_{2k-1}+\delta_{k-1}\}$. By Theorem~\ref{main_th}(vi), $M_{2,\delta_{k-1}}^{(k-1)}(T'_0)$ is a strong realization of $A_2$ and $M_2^{(k)}(T'_i)$ is a strong realization of $A_1$ for any $i\geq 1$. By Theorem~\ref{main_th}(iii), (iv) and (vi), the following hold:
\begin{itemize}
        \item[(I)] For all $\lambda\in \Lambda_1$, $L(M_2^{(k)}(T'_i),\lambda)=0$ and $$m_{M_{2,\delta_{k-1}}^{(k-1)}[T'_0-v'_0]}(\lambda)=m_{M_{2,\delta_{k-1}}^{(k-1)}(T'_0)}(\lambda)+1.$$
        \item[(II)] For all $\lambda \in \Lambda_2$, $L(M_{2,\delta_{k-1}}^{(k-1)}(T'_0),\lambda)=0$
        and $m_{M_2^{(k)}[T'_i-v'_i]}(\lambda)=m_{M_2^{(k)}(T'_i)}(\lambda)+1$.
    \end{itemize}
    Having verified the hypotheses, we are now ready to apply Lemma~\ref{multiplicidades}.
    Lemma~\ref{multiplicidades}(a) tells us that there exist $\tilde{\lambda}'_{\min}=\lambda_{0}-\delta_{k-1},\tilde{\lambda}_{\max}'\in\mathbb{R}$ such that 
    \begin{equation}\label{eq:dspecb}
    \DSpec(M[\tilde{T}']) \subseteq \{\lambda_0-\delta_{k-1},\lambda_0,\ldots,\lambda_{2k-2},\lambda_{2k-1}+\delta_{k-1},\lambda_{2k-1}+\delta_{k-1}+\theta_{k-1},\tilde{\lambda}'_{\max}\}.
    \end{equation}
Moreover, $\lambda_0-\delta_{k-1},\lambda_1,\ldots,\lambda_{2k-1}+\delta_{k-1},\tilde{\lambda}_{\max}'$ satisfy Lemma~\ref{multiplicidades}(c), while the values $\lambda_0,\lambda_2,\ldots,\lambda_{2k-2},\lambda_{2k-1}+\delta_{k-1}+\theta_{k-1}$ satisfy Lemma~\ref{multiplicidades}(d).

To conclude the proof we apply Lemma~\ref{multiplicidades} to $\tilde{T}\odot \tilde{T}'$ using the matrices defined above. Here, $\mathcal{C}_1=\{\tilde{T}\}$,  $\mathcal{C}_2=\{\tilde{T}'\}$, $k_1=k_2=k+1$, $A_1=\{\tilde{\lambda}_{\min},\lambda_0-\delta_{k-1},\lambda_0,\ldots,\lambda_{2k-2},\lambda_{2k-1}+\delta_{k-1},\lambda_{2k-1}+\delta_{k-1}+\theta_{k-1}\}$, $A_2=\{\lambda_0-\delta_{k-1},\lambda_0,\ldots,\lambda_{2k-2},\lambda_{2k-1}+\delta_{k-1},\lambda_{2k-1}+\delta_{k-1}+\theta_{k-1},\tilde{\lambda}'_{\max}\}$, hence $a=\tilde{\lambda}_{\min}$, $b=\tilde{\lambda}'_{\max}$. Set $\Lambda_1=\{\lambda_0,\lambda_2,\ldots,\lambda_{2k-2},\lambda_{2k-1}+\delta_{k-1}+\theta_{k-1}\}$ and $\Lambda_2=\{\lambda_0-\delta_{k-1},\lambda_1,\lambda_3,\ldots,\lambda_{2k-3},\lambda_{2k-1}+\delta_{k-1}\}$. Since $a\in A_1$, $b\in A_2$ and $p=1$, it follows that there exist $\lambda_{\min}$ and $\lambda_{\max}$ such that
\begin{equation}
\DSpec(M) = \{\lambda_{\min},\lambda_0-\delta_{k-1},\lambda_0,\ldots,\lambda_{2k-2},\lambda_{2k-1}+\delta_{k-1},\lambda_{2k-1}+\delta_{k-1}+\theta_{k-1},\lambda_{\max}\}.
\end{equation}
In particular, $|\DSpec(M)|=2k+4=d+1$, so that $q(T)=d+1$ in this case.

\vspace{5pt}

\noindent \textbf{Case 3:} If $d=2k+3$ for some $k\geq 1$ and $T$ is an unfolding of $S''_d$, by Proposition~\ref{equivalence_other_seeds}(iii), there exist $T_0\in \mathcal{T}^\ast$ of height $k-1$ and $T_1,\ldots,T_p,T'_0,T'_1,\ldots,T'_q\in \mathcal{T}^\ast$ of height $k$, where $p,q\geq 1$, such that $T= (T_0\odot(T_1,\ldots,T_p))\odot (T'_0\odot(T'_1,\ldots,T'_q))$. 

We define the matrix $M\in \mathcal{S}(T)$ in two parts. For the part that is related to $\tilde{T}=T_0\odot(T_1,\ldots,T_p)$, set
$M[T_0]=M_1^{(k-1)}(T_0)$ and $M[T_i]=M_1^{(k)}(T_i)$ for $i\in\{1,\ldots,p\}$, where $M_1$ denotes a matrix that satisfy (i) in Theorem~\ref{main_th}. By Lemma~\ref{lema_define_max}, we may define the weights on the edges $\{v_0,v_i\}$ such that $\lambda_{2k-1}+\delta_{k-1}+\theta_{k-1}$ is the maximum eigenvalue of $M[T_0\odot(T_1,\ldots,T_p)]$. Note that, for $M[T_0\odot(T_1,\ldots,T_p)]$, all hypotheses of Lemma~\ref{multiplicidades} are satisfied for the same reasons described in case 1 above. Then, by Lemma~\ref{multiplicidades}, there exist $\tilde{\lambda}_{\min},\tilde{\lambda}_{\max}=\lambda_{2k-1}+\delta_{k-1}+\theta_{k-1}\in\mathbb{R}$ such that 
$$\DSpec (M[\tilde{T}]) \subseteq \{\tilde{\lambda}_{\min},\lambda_0-\delta_{k-1},\lambda_0,\ldots,\lambda_{2k-2},\lambda_{2k-1}+\delta_{k-1},\lambda_{2k-1}+\delta_{k-1}+\theta_{k-1}\}.$$ 
Moreover, $\tilde{\lambda}_{\min},\lambda_0,\ldots,\lambda_{2k-2},\lambda_{2k-1}+\delta_{k-1}+\theta_{k-1}$ satisfy Lemma~\ref{multiplicidades}(c), while $ \lambda_0-\delta_{k-1},\lambda_1,\ldots,\lambda_{2k-3},\lambda_{2k-1}+\delta_{k-1}$ satisfy Lemma~\ref{multiplicidades}(d).

For the part that is related to $\tilde{T}'=T_0'\odot(T_1',\ldots,T_p')$, set $M[T'_0]=M_{1,\theta_{k}}^{(k)}(T'_0)$, $M[T'_i]=M_2^{(k)}(T'_i)$, for $i\in\{1,\ldots,q\}$, where $M_2$ denotes a matrix that satisfies (ii) and $M_{1,\theta}$ denotes a matrix that satisfies (v) in Theorem~\ref{main_th}. By Lemma~\ref{lema_define_max}, we may define the weights on the edges $\{v'_0,v'_i\}$ such that $\lambda_{0}-\delta_{k-1}$ is the minimum eigenvalue of $M[\tilde{T}']$.
To compute the spectrum of $M[\tilde{T}']$ we apply Lemma~\ref{multiplicidades}. To this end, let $\mathcal{C}_1=\mathcal{C}_2=\mathcal{T}^\ast$, let $k_1=k_2=k$, and consider $A_1=C_{k}\setminus\{\lambda_{0}-\delta_{k}\}$ and $A_2=(C_{k}\cup\{\lambda_{0}-\delta_{k-1}+\theta_{k}\})\setminus\{\lambda_{0}-\delta_{k-1},\lambda_{2k-1}+\delta_{k-1}+\theta_{k-1}\}$.
    Note that $A_1 \cap A_2=\{\lambda_{0},\ldots,\lambda_{2k-2},\lambda_{2k-1}+\delta_{k-1}\}$, that $(A_1 \cup A_2) \setminus (A_1 \cap A_2)=\{\lambda_{0}-\delta_{k-1}+\theta_{k},\lambda_{2k-1}+\delta_{k-1}+\theta_{k-1}\}$, and hence $a=\lambda_{0}-\delta_{k-1}+\theta_{k}$, $b=\lambda_{2k-1}+\delta_{k-1}+\theta_{k-1}$. Set $\Lambda_1=\{\lambda_2,\lambda_4,\ldots,\lambda_{2k-2}\}$ and $\Lambda_2=\{\lambda_1,\lambda_3,\ldots,\lambda_{2k-1}+\delta_{k-1}\}$. 
    By Theorem~\ref{main_th}(vi), $M_{1,\theta_{k}}^{(k)}(T'_0)$ is a strong realization of $A_2$ and $M_2^{(k)}(T'_i)$ is a strong realization of $A_1$ for any $i\geq 1$.
    By Theorem~\ref{main_th}(iii), (iv) and (v), the following hold:
    \begin{itemize}
        \item[(I)] For all $\lambda\in \Lambda_1$, $L(M_2^{(k)}(T'_i),\lambda)=0$ and $m_{M_{1,\theta_{k}}^{(k)}[T'_0-v'_0]}(\lambda)=m_{M_{1,\theta_{k}}^{(k)}(T'_0)}(\lambda)+1$.
        \item[(II)] For all $\lambda \in \Lambda_2$, $L(M_{1,\theta_{k}}^{(k)}(T'_0),\lambda)=0$
        and $m_{M_2^{(k)}[T'_i-v'_i]}(\lambda)=m_{M_2^{(k)}(T'_i)}(\lambda)+1$.
    \end{itemize}
    Having verified the hypotheses, we are now ready to apply Lemma~\ref{multiplicidades}.
    Lemma~\ref{multiplicidades}(a) tells us that there exist $\tilde{\lambda}'_{\min}=\lambda_{0}-\delta_{k-1},\tilde{\lambda}'_{\max}\in\mathbb{R}$ such that 
    \begin{equation}\label{eq:dspecc}
    \DSpec(M[\tilde{T}']) \subseteq \{\lambda_0-\delta_{k-1},\lambda_0,\ldots,\lambda_{2k-2},\lambda_{2k-1}+\delta_{k-1},\lambda_{2k-1}+\delta_{k-1}+\theta_{k-1},\tilde{\lambda}'_{\max}\}.
    \end{equation}
   Moreover,  $\lambda_0-\delta_{k-1},\lambda_1,\ldots,\lambda_{2k-1}+\delta_{k-1},\tilde{\lambda}'_{\max}$ satisfy Lemma~\ref{multiplicidades}(c), while the values $\lambda_0,\lambda_2,\ldots,\lambda_{2k-2},\lambda_{2k-1}+\delta_{k-1}+\theta_{k-1}$ satisfy Lemma~\ref{multiplicidades}(d).

As in case 2, we conclude the proof by applying Lemma~\ref{multiplicidades} to $\tilde{T}\odot \tilde{T}'$. This gives $\lambda_{\min},\lambda_{\max}$ such that
\begin{equation}
\DSpec(M) = \{\lambda_{\min},\lambda_0-\delta_{k-1},\lambda_0,\ldots,\lambda_{2k-2},\lambda_{2k-1}+\delta_{k-1},\lambda_{2k-1}+\delta_{k-1}+\theta_{k-1},\lambda_{\max}\}.
\end{equation}
In particular, $|\DSpec(M)|=2k+4=d+1$, so that $q(T)=d+1$ in this case.
\end{proof}

We observe that our proof of Theorem~\ref{thm:main} using Theorem~\ref{main_th} allows us to ask more about the spectrum of a realization of a diminimal tree. For instance, we may require it to be integral. 
\begin{corollary}
Let $d$ be a positive integer. Let $\mathcal{T}(S_d)$, $\mathcal{T}(S'_d)$ and $\mathcal{T}(S''_d)$ be the families of trees of diameter $d$ generated by the seeds $S_d$, $S'_d$ and $S''_d$, respectively, where $S'_d$ is defined for $d\geq 4$ and $S''_d$ for odd values of $d \geq 5$. For every $T\in \mathcal{T}(S_d) \cup \mathcal{T}(S'_d) \cup \mathcal{T}(S''_d)$, there is a real symmetric matrix $M(T)$ whose spectrum is integral with underlying tree $T$ and $|\DSpec(T)|=d+1$.
\end{corollary}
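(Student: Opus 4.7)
The plan is to exploit the fact that the construction underlying Theorem~\ref{main_th} and the proof of Theorem~\ref{thm:main} for the seeds $S_d'$ and $S_d''$ is entirely parametrized by two real numbers $\alpha<\beta$, and that every eigenvalue appearing in any intermediate matrix is a linear combination of $\alpha$ and $\beta$ whose coefficients are dyadic rationals with denominators dividing $2^K$ for some $K=K(d)$ determined by $d$. Choosing $\alpha,\beta\in\mathbb{Z}$ with $\beta-\alpha$ a sufficiently large power of $2$ should force every such combination into $\mathbb{Z}$ while preserving all the geometric constraints of the construction (strict positivity of shifts, the ordering in $C_k$, and the applicability of Lemma~\ref{lema_define_max}).

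First, I would take $K=\lceil d/2\rceil$ and fix integers $\alpha<\beta$ with $\beta-\alpha=2^K$, so that the auxiliary parameters $\delta_j=\theta_j=(\beta-\alpha)/2^j=2^{K-j}$ used throughout the induction are positive integers for every $j\le K$. Using the base case $C_1=\{2\alpha-\beta,\alpha,\beta,2\beta-\alpha\}\subseteq\mathbb{Z}$ together with the recursion $C_{k+1}=\{\lambda_0-\delta_k,\lambda_0,\ldots,\lambda_{2k},\lambda_{2k+1}+\delta_k,\lambda_{2k+1}+\delta_k+\theta_k\}$ from Remark~\ref{relation_ck}, a straightforward induction on $k\le K$ yields $C_k\subseteq\mathbb{Z}$. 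Next, I would revisit the recursive construction of $M_1^{(k)},M_2^{(k)},M_{1,\theta_k}^{(k)}$ and $M_{2,\delta_k}^{(k)}$: at each step, one extremal eigenvalue is prescribed via Lemma~\ref{lema_define_max} to be a specific element of $C_{k+1}\subseteq\mathbb{Z}$, and the complementary extremum, determined by the trace identity $\lambda_{\min}+\lambda_{\max}=a+b$ of Lemma~\ref{multiplicidades}(f) with $a,b\in C_k\subseteq\mathbb{Z}$, is automatically an integer. Because only the specific shifts $\theta_k,\delta_k\in\mathbb{Z}$ ever enter the recursion, no non-integer quantity appears in any intermediate spectrum.

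Finally, the same pattern would be applied in each of the three cases of the proof of Theorem~\ref{thm:main} for seeds $S_d'$ and $S_d''$: Lemma~\ref{multiplicidades} is invoked with $a,b\in C_K\subseteq\mathbb{Z}$, and Lemma~\ref{lema_define_max} lets us prescribe the new $\lambda_{\max}$ (or $\lambda_{\min}$) to be any integer strictly outside the integral spectra of the subtrees, so the complementary extremum is then integer as well. I expect the main obstacle to be a careful bookkeeping check: one must verify that the positivity and ordering constraints required by items (v)--(vi) of Theorem~\ref{main_th} (namely $0<\theta,\delta<y_k$) remain compatible with restricting $\theta,\delta$ to the specific integer values $2^{K-k}$, and that the single choice $K=\lceil d/2\rceil$ is large enough to accommodate all three seed families of diameter $d$ simultaneously. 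Once this is confirmed, the matrix $M(T)$ produced for any $T\in\mathcal{T}(S_d)\cup\mathcal{T}(S_d')\cup\mathcal{T}(S_d'')$ has integral spectrum and satisfies $|\DSpec(M(T))|=d+1$, as required.
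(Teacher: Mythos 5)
Your proposal is correct and matches the paper's proof in all essentials: choose $\alpha,\beta\in\mathbb{Z}$ with $\beta-\alpha$ a power of $2$ so that every $\delta_j=\theta_j=\tfrac{\beta-\alpha}{2^j}$ used in building $C_k$ is an integer and hence $C_k\subseteq\mathbb{Z}$ by the recursion of Remark~\ref{relation_ck}; then for the seeds $S_d'$ and $S_d''$ use Lemma~\ref{lema_define_max} to prescribe one of the two extremal eigenvalues as an integer and invoke the trace identity $\lambda_{\min}+\lambda_{\max}=a+b$ of Lemma~\ref{multiplicidades}(f) (with $a,b\in\mathbb{Z}$) to force the other. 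The paper uses the tighter choice $\beta-\alpha=2^{k-1}$ rather than your $\beta-\alpha=2^{\lceil d/2\rceil}$, and the positivity and ordering checks you flag as a potential obstacle hold automatically since $\delta_k=\theta_k=y_k/2<y_k$, but these are presentational rather than substantive differences.
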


\begin{proof}
The proof follows the same steps of the proof of Theorem~\ref{thm:main}. However, when $d \in \{2k,2k+1\}$ and we apply Theorem~\ref{main_th} to produce the set $C_k$, we start the proof by fixing an arbitrary integer $\alpha$ and by choosing $\beta=\alpha+2^{k-1}$, so that $\frac{\beta-\alpha}{2^{k-1}}$ is an integer. Then $C_1=\{2\alpha-\beta,\alpha,\beta,2\beta-\alpha\}$ is integral and the elements $\delta_j,\theta_j$ are integers for all $j\leq k-1$. Remark~\ref{relation_ck} ensures that the sets $C_j$ are integral for all $j\leq k$. This gives the desired conclusion for unfoldings of $S_d$.

For the other seeds, we need to go back to the proof of Theorem~\ref{thm:main}. For instance, assume that we are in the case $d=2k+2$ and we have an unfolding of $S'_{d}$. With the choices that we made for $S_{d}$, if we repeat the proof of Theorem~\ref{thm:main} until we get to~\eqref{eq:dspec}, we deduce that all elements of $\DSpec(M)$ are integers except possibly $\lambda_{\min}$ and $\lambda_{\max}$. However, when we applied Lemma~\ref{multiplicidades} to define $M(T)$, it was not necessary to assign weights to the edges joining the roots of the trees $T_0\odot (T_1,\ldots,T_p)$. By Lemma~\ref{lema_define_max}, we can assign these weights in a way that $\lambda_{\max}$ is equal to any value greater than $\lambda_{2k-1}+\delta_{k-1}$, and we may choose this value to be an integer. Moreover, Lemma~\ref{multiplicidades}(f) tells us that $\lambda_{\min}+\lambda_{\max}=a+b$, where $a$ and $b$ are both known to be integers. Therefore $\lambda_{\min}$ is also an integer and the result follows.  

Unfoldings of the other two seeds may be dealt with using similar arguments.
\end{proof}

\section{Example}\label{sec:example}
In this section we provide an example to illustrate that our proofs may be used to construct matrices associated with diminimal trees. In this example, we construct a symmetric matrix whose underlying graph is the seed $S_9$ (of diameter 9) with exactly 10 distinct eigenvalues. It is based on the matrix 
$M_{1}^{(5)}\in\mathcal{S}(S_{9})$ defined in Theorem~\ref{main_th}. We choose $\alpha=0$ and $\beta=32$, and after $k=5$ steps we obtain the matrix $M^{(5)}_1 (S_{9})\in\mathcal{S}(S_{9})$ with integral spectrum given by $$\Spec(M^{(5)}_1) = \{-62^{[1]}, -56^{[1]}, -48^{[2]}, -32^{[4]},   0^{[8]},  32^{[8]}\-,  80^{[4]}\-, 104^{[2]}, 116^{[1]}, 122^{[1]}\}.$$ It is depicted in Figure~\ref{S9}, where vertex weights denote the diagonal entries and edge weights denote the off-diagonal nonzero entries. 
In a git repository\footnote{\url{https://github.com/Lucassib/Diminimal-Graph-Algorithm} or \url{https://lucassib-diminimal-graph-algorithm-st-app-0t3qu7.streamlit.app/}}, readers can access an algorithm based on the proof of Theorem \ref{main_th} to compute a matrix $M\in\mathcal{S}(S_{d})$ where the input parameters are $\alpha$, $\beta$ and $d$, where $k=\ceil{\frac{d}{2}}$.

\begin{figure}[H]
\includegraphics[]{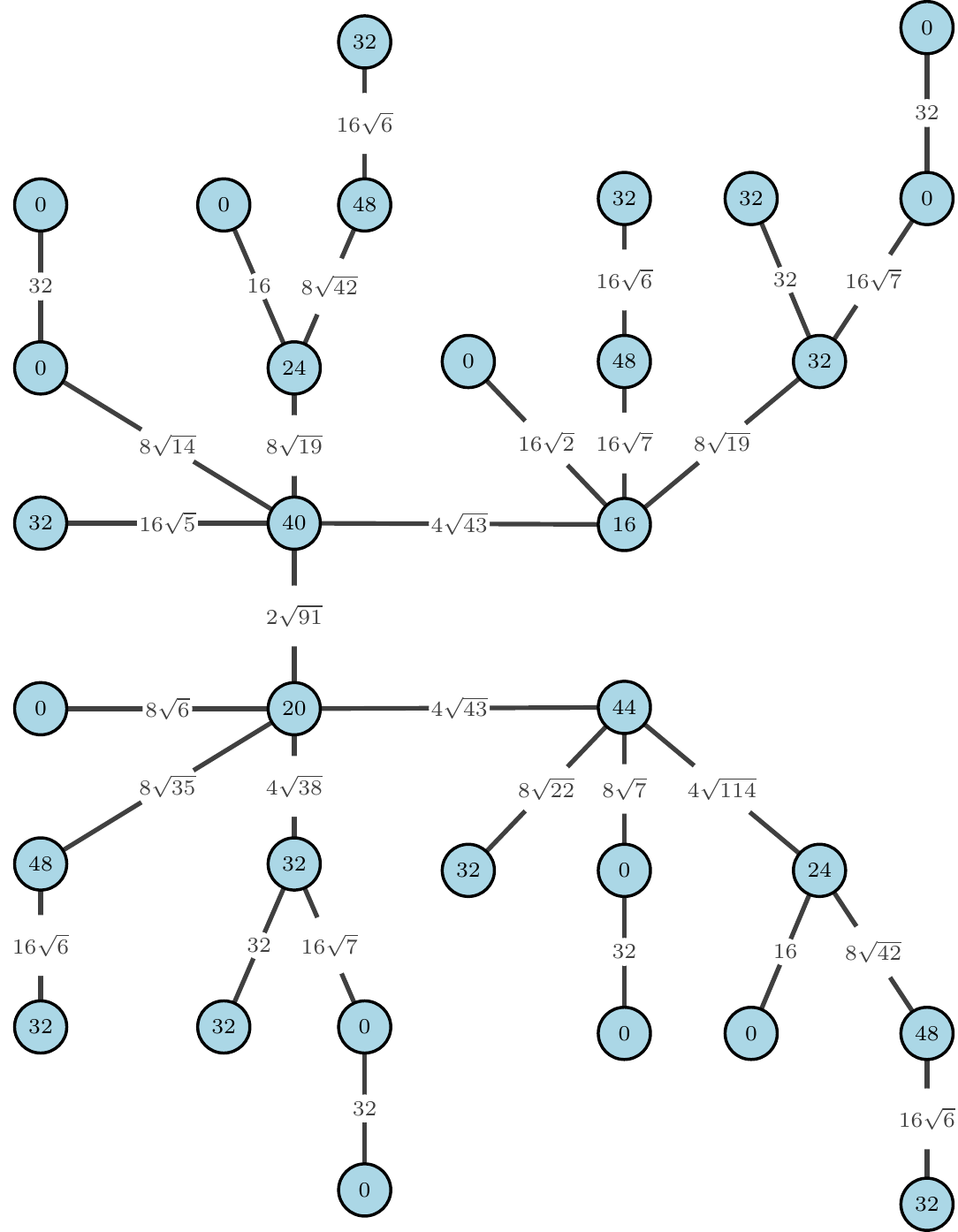}
\caption{\label{S9} Matrix $M^{(5)}_1 \in \mathcal{S}(S_{9})$.}
\end{figure}
\section*{Acknowledgments}
This work is partially supported by
MATH-AMSUD under project GSA, brazilian team financed by CAPES under project 88881.694479/2022-01. L. E. Allem acknowledges the support of FAPERGS 21/2551-
0002053-9. C.~Hoppen acknowledges the support of FAPERGS~19/2551-0001727-8  and CNPq (Proj.\ 315132/2021-3). V. Trevisan acknowledges partial support of CNPq grants 409746/2016-9 and 310827/2020-5, and FAPERGS grant PqG 17/2551-0001.
CNPq is the National Council for Scientific and Technological
Development of Brazil.

\bibliographystyle{amsplain}
\bibliography{mybibliography}

\providecommand{\bysame}{\leavevmode\hbox to3em{\hrulefill}\thinspace}
\providecommand{\MR}{\relax\ifhmode\unskip\space\fi MR }
\providecommand{\MRhref}[2]{%
  \href{http://www.ams.org/mathscinet-getitem?mr=#1}{#2}
}
\providecommand{\href}[2]{#2}
\begin{thebibliography}{10}

\bibitem{F2013}
Bahman Ahmadi, Fatemeh Alinaghipour, Michael Cavers, Shaun Fallat, Karen
  Meagher, and Shahla Nasserasr, \emph{Minimum number of distinct eigenvalues
  of graphs}, ELA. The Electronic Journal of Linear Algebra [electronic only]
  \textbf{26} (2013).

\bibitem{allred2022combinatorial}
Sarah Allred, Craig Erickson, Kevin Grace, H~Tracy Hall, and Alathea Jensen,
  \emph{A combinatorial bound on the number of distinct eigenvalues of a
  graph}, arXiv preprint arXiv:2209.11307 (2022).

\bibitem{barioli2004two}
Francesco Barioli and Shaun Fallat, \emph{On two conjectures regarding an
  inverse eigenvalue problem for acyclic symmetric matrices}, The Electronic
  Journal of Linear Algebra \textbf{11} (2004), 41--50.

\bibitem{BARRETT2020276}
Wayne Barrett, Steve Butler, Shaun~M. Fallat, H.~Tracy Hall, Leslie Hogben,
  Jephian C.-H. Lin, Bryan~L. Shader, and Michael Young, \emph{The inverse
  eigenvalue problem of a graph: Multiplicities and minors}, Journal of
  Combinatorial Theory, Series B \textbf{142} (2020), 276--306.

\bibitem{barrett2017generalizations}
Wayne Barrett, Shaun Fallat, H~Tracy Hall, Leslie Hogben, Jephian C-H Lin, and
  Bryan~L Shader, \emph{Generalizations of the strong arnold property and the
  minimum number of distinct eigenvalues of a graph}, The Electronic Journal of
  Combinatorics \textbf{24} (2017), no.~2, P2--40.

\bibitem{TEMA1041}
Rodrigo Braga and Virg\'{i}nia Rodrigues, \emph{Locating eigenvalues of
  perturbed {L}aplacian matrices of trees}, TEMA (S\~ao Carlos) Brazilian Soc.
  of Appl. Math. and Comp. \textbf{18} (2017), no.~3, 479--491.

\bibitem{Chu98}
Moody~T. Chu, \emph{Inverse eigenvalue problems}, SIAM Review \textbf{40}
  (1998), no.~1, 1--39.

\bibitem{fallat2022minimum}
Shaun Fallat and Seyed~Ahmad Mojallal, \emph{On the minimum number of distinct
  eigenvalues of a threshold graph}, Linear Algebra and its Applications
  \textbf{642} (2022), 1--29.

\bibitem{livro}
Carlos Hoppen, David~P Jacobs, and Vilmar Trevisan, \emph{Locating eigenvalues
  in graphs: Algorithms and applications}, Springer Nature, 2022.

\bibitem{JT2011}
David~P. Jacobs and Vilmar Trevisan, \emph{Locating the eigenvalues of trees},
  Linear Algebra Appl. \textbf{434} (2011), no.~1, 81--88. \MR{2737233
  (2012b:15017)}

\bibitem{ParterWiener03}
Charles~R. Johnson, Ant\'{o}nio~Leal Duarte, and Carlos~M. Saiago, \emph{The
  parter--wiener theorem: Refinement and generalization}, SIAM Journal on
  Matrix Analysis and Applications \textbf{25} (2003), no.~2, 352--361.

\bibitem{JOHNSON20027}
Charles~R. Johnson and Ant\'{o}nio {Leal Duarte}, \emph{On the possible
  multiplicities of the eigenvalues of a hermitian matrix whose graph is a
  tree}, Linear Algebra and its Applications \textbf{348} (2002), no.~1, 7--21.

\bibitem{JOHNSON2020}
Charles~R. Johnson, Jacob Lettie, Sander Mack-Crane, and Alicja
  Szabelska-Bersewicz, \emph{{Branch duplication in trees: uniqueness of sedes
  and enumeration of sedes}}, {Proyecciones (Antofagasta)} \textbf{39} (2020),
  451 -- 465 (en).

\bibitem{JohnsonSaiago2018}
Charles~R. Johnson and Carlos~M. Saiago, \emph{Eigenvalues, multiplicities and
  graphs}, Cambridge University Press, Cambridge, UK, 2018.

\bibitem{DUARTE1989173}
Ant\'{o}nio {Leal Duarte}, \emph{Construction of acyclic matrices from spectral
  data}, Linear Algebra and its Applications \textbf{113} (1989), 173--182.

\bibitem{leal2002minimum}
Ant{\'o}nio Leal-Duarte and Charles~R Johnson, \emph{On the minimum number of
  distinct eigenvalues for a symmetric matrix whose graph is a given tree},
  Mathematical Inequalities and Applications \textbf{5} (2002), 175--180.

\bibitem{Parter60}
S.~Parter, \emph{On the eigenvalues and eigenvectors of a class of matrices},
  Journal of the Society for Industrial and Applied Mathematics \textbf{8}
  (1960), no.~2, 376--388.

\bibitem{WIENER1984}
Gerry Wiener, \emph{Spectral multiplicity and splitting results for a class of
  qualitative matrices}, Linear Algebra and its Applications \textbf{61}
  (1984), 15--29.

\end{thebibliography}

\appendix

\section{Additional results}

We illustrate how Proposition~\ref{equivalence_other_seeds} can be proved by providing a detailed proof of item (ii). The proofs of (i) and (iii) are analogous.  Proposition~\ref{equivalence_other_seeds}(ii) states that $T\in\mathcal{T}(S'_{2k+3})$ if, and only if, there exist $T_1,\ldots,T_p,T'_1,\ldots,T'_q\in \mathcal{T}^\ast, p,q\geq 1$ of height $k$ and $T_0,T'_0\in \mathcal{T}^\ast$ of height $k-1$ such that $$T=(T_0\odot(T_1,\ldots,T_p))\odot (T'_0\odot(T'_1,\ldots,T'_q)).$$

Let $T$ be a tree and $k\geq1$. The case $k=1$ ($S_5'=P_6$) is simple, so we concentrate in the case $k\geq  2$. First assume that there exist $T_1,\ldots,T_p,T'_1,\ldots,T'_q\in \mathcal{T}^\ast$ of height $k$, where $p,q\geq 1$, and $T_0,T'_0\in \mathcal{T}^\ast$ of height $k-1$ such that $T=(T_0\odot(T_1,\ldots,T_p))\odot (T'_0\odot(T'_1,\ldots,T'_q))$ (see Figure~\ref{fig:left}). 
Note that all paths of length $2k+3$ in $T$ may be decomposed as $Pv_0v_0'Q$ where $P$ is a path of length $k$ joining a leaf of some $T_i$ to its root $v_i$ and $Q$ is a path of length $k$ joining the root $v_j'$ of some $T_j'$ to one of its leaves. In particular, no such path uses vertices in $V(T_0-v_0)\cup V(T_0'-v_0')$ nor vertices in two different components of some $T_i-v_i$ or $T_j-v_j'$. 

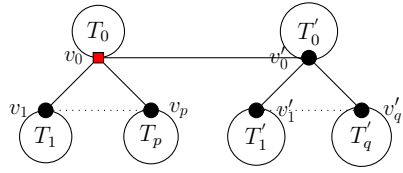
\begin{figure}
    \centering
    \begin{tikzpicture}[scale=.7,auto=left,every node/.style={circle,scale=0.5}]

\path(0,0)node[shape=circle,scale=1.5,draw] (1) {$T_{1}$}
     (0,.5)node[shape=circle,label=left:\Large{$v_{1}$},draw,fill=black] (11) {}
     (2,0)node[shape=circle,scale=1.5,draw] (2) {$T_{p}$}
     (2,.5)node[shape=circle,label=right:\Large{$v_{p}$},draw,fill=black] (22) {}
      (1,2)node[shape=circle,scale=1.5,draw] (0) {$T_{0}$}
     (1,1.5)node[shape=rectangle,label=left:\Large{$v_{0}$},draw,fill=red] (00) {}

     (4,0)node[shape=circle,scale=1.5,draw] (4) {$T_{1}^{'}$}
     (4,.5)node[shape=circle,label=right:\Large{$v_{1}'$},draw,fill=black] (44) {}
     (6,0)node[shape=circle,scale=1.5,draw] (5) {$T_{q}^{'}$}
     (6,.5)node[shape=circle,label=right:\Large{$v_{q}'$},draw,fill=black] (55) {}
      (5,2)node[shape=circle,scale=1.5,draw] (3) {$T_{0}^{'}$}
     (5,1.5)node[shape=circle,label=left:\Large{$v_{0}'$},draw,fill=black] (33) {};

     \draw[-](11)--(00);
     \draw[-](22)--(00);
     \draw[dotted](11)--(22);

     \draw[-](33)--(00);
     \draw[-](33)--(44);
     \draw[-](33)--(55);
     \draw[dotted](44)--(55);

\end{tikzpicture}
\caption{$T=(T_0\odot(T_1,\ldots,T_p))\odot (T'_0\odot(T'_1,\ldots,T'_q))$\label{fig:left}}
\end{figure}

By Proposition~\ref{prop_equivalence}, we know that the trees 
$T_1,\ldots,T_p,T'_1,\ldots,T'_q$ are unfoldings of $S_{2k-1}$ or $S_{2k}$, and that $T_0,T'_0$ are unfoldings of $S_{2k-3}$ or $S_{2k-2}$. Recall that, in part (ii) of the proof of Proposition~\ref{prop_equivalence}, given $j\geq 2$, we were able to fold the pair $(S_{2j-3},S_{2j-3})$ in $S_{2j}=S_{2j-3}\odot(S_{2j-3},S_{2j-3})$ to $S_{2j-3}\odot S_{2j-3}=S_{2j-1}$ without affecting the diameter of the tree. This \emph{does not} mean that $S_{2j}$ can always be folded onto $S_{2j-1}$, but instead that folding can be performed if the diameter of the tree is not modified.

For the tree $T$ in this proposition, where maximum paths have the structure mentioned above, this means that any $T_i$ or $T_i'$ with $i>0$ may be folded directly to $S_{2k-1}$ or may first be folded to $S_{2k}=S_{2k-3} \odot (S_{2k-3},S_{2k-3})$, which can in turn be folded to $S_{2k-3} \odot S_{2k-3}=S_{2k-1}$. Similarly, if $k\geq 3$, $T_0$ and $T_0'$ may be folded directly to $S_{2k-3}$ or may first be folded to $S_{2k-2}=S_{2k-5} \odot (S_{2k-5},S_{2k-5})$ and then to $S_{2k-5} \odot S_{2k-5}=S_{2k-3}$. For $k=2$, $T_0$ and $T_0'$ have height 1, so they are equal to $S_{1}$ or they are stars that can be folded into $S_1$.

Combining this, we conclude that $T$ can be folded to 
$$T'=(S_{2k-3}\odot(S_{2k-1},\ldots,S_{2k-1}))\odot (S_{2k-3}\odot(S_{2k-1},\ldots,S_{2k-1})),$$ with $p$ terms in the first vector and $q$ terms in the second. Now, if $p>1$ or $q>1$, we can fold each $(S_{2k-1},\ldots,S_{2k-1})$ onto a single $S_{2k-1}$, without decreasing the diameter. This results in $(S_{2k-3}\odot S_{2k-1})\odot (S_{2k-3}\odot S_{2k-1})=S'_{2k+3}$, as required. Figure~\ref{fig:case} illustrates this case.

\begin{figure}[H]
\centering
\begin{minipage}[h]{.5\textwidth}
\centering 
\begin{subfigure}{0.49\textwidth}
\centering 
            \begin{tikzpicture}[scale=.7,auto=left,every node/.style={circle,scale=0.5}]

\path(0,-0.2)node[shape=circle,scale=1.5,draw] (1) {$S_{2k-1}$}
     (0,.5)node[shape=circle,draw,fill=red] (11) {}
     (2,-.2)node[shape=circle,scale=1.5,draw] (2) {$S_{2k-1}$}
     (2,.5)node[shape=circle,draw,fill=red] (22) {}
      (1,2.2)node[shape=circle,scale=1.5,draw] (0) {$S_{2k-3}$}
     (1,1.5)node[shape=circle,draw,fill=red] (00) {}

     (4,-.2)node[shape=circle,scale=1.5,draw] (4) {$S_{2k-1}$}
     (4,.5)node[shape=circle,draw,fill=red] (44) {}
     (6,-.2)node[shape=circle,scale=1.5,draw] (5) {$S_{2k-1}$}
     (6,.5)node[shape=circle,draw,fill=red] (55) {}
      (5,2.2)node[shape=circle,scale=1.5,draw] (3) {$S_{2k-3}$}
     (5,1.5)node[shape=circle,draw,fill=red] (33) {};

     \draw[-](11)--(00);
     \draw[-](22)--(00);
     \draw[dotted](11)--(22);

     \draw[-](33)--(00);
     \draw[-](33)--(44);
     \draw[-](33)--(55);
     \draw[dotted](44)--(55);

\end{tikzpicture}
 \end{subfigure}
 
 \end{minipage}\hfill
\begin{minipage}[h]{.5\textwidth}
\centering

 \begin{subfigure}{0.49\textwidth}
     \centering
\begin{tikzpicture}[scale=.7,auto=left,every node/.style={circle,scale=0.5}]

\path(0,-0.2)node[shape=circle,scale=1.5,draw] (1) {$S_{2k-1}$}
     (0,.5)node[shape=circle,draw,fill=red] (11) {}
     (0,2.2)node[shape=circle,scale=1.5,draw] (0) {$S_{2k-3}$}
     (0,1.5)node[shape=circle,draw,fill=red] (00) {}

     (2,-.2)node[shape=circle,scale=1.5,draw] (4) {$S_{2k-1}$}
     (2,.5)node[shape=circle,draw,fill=red] (44) {}
    (2,2.2)node[shape=circle,scale=1.5,draw] (3) {$S_{2k-3}$}
     (2,1.5)node[shape=circle,draw,fill=red] (33) {};

     \draw[-](11)--(00);

     \draw[-](33)--(00);
     \draw[-](33)--(44);

\end{tikzpicture}
 \end{subfigure}
 
 \end{minipage}
\caption{Folding of $T'$ into $(S_{2k-3}\odot S_{2k-1})\odot (S_{2k-3}\odot S_{(2k-1})=S'_{2k+3}$.\label{fig:case}}
\end{figure}
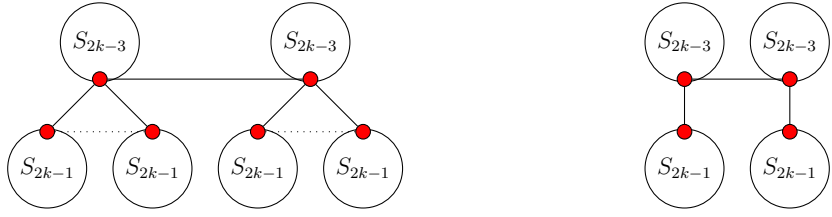

For the converse, our proof is by induction on the number of branch decompositions performed on the seed $S'_{2k+3}$ to produce $T$. If no CBD was performed, then $T=S'_{2k+3}=(S_{2k-3}\odot S_{2k-1})\odot (S_{2k-3}\odot S_{2k-1})$ and we have $T=(T_0\odot T_1) \odot (T_0'\odot T_1')$ for $T_0=T_0'=S_{2k-3}$ (of height $k-1$) and $T_1=T_1'=S_{2k-1}$ (of height $k$). 

Now suppose that if $T\in\mathcal{T}(S'_{2k+3})$ has been formed after a sequence of $\ell$ branch decompositions, then there exist $T_0,T_1,\ldots,T_p,T_0',T'_1,\ldots,T'_q\in \mathcal{T}^\ast$ as in the statement of the theorem for which 
\begin{equation}\label{eq:2}
T=(T_0\odot(T_1,\ldots,T_p))\odot (T'_0\odot(T'_1,\ldots,T'_q)).    
\end{equation} Note that the central edge of $T$ is $\{v_0,v_0'\}$ and the tree is rooted at $v_0$.

We claim that if we perform an additional $s$-CBD to $T$, we still obtain a decomposition as in (iii). Indeed, let $U$ be the tree obtained after performing an $s$-CBD of a branch $B$ at $v\in V(T)$. First assume that $v\notin \{v_0,v_0'\}$. Without loss of generality, assume that $v\in V(T_i)$, so that, in case $i=0$, $v$ is not the root of $T_0$. Since the diameter remains the same, $B$ must be entirely contained in $T_i$. By Proposition~\ref{prop_equivalence}, the tree $U_i$ obtained after performing an $s$-CBD of branch $B$ at $v\in V(T_i)$ lies in $\mathcal{T}^\ast$. In particular, if we replace $T_i$ by $U_i$ in~\eqref{eq:2}, we get the desired decomposition of $U$.

Next assume that $v =  v_0$ (the case $v =  v'_0$ is analogous). Let $B$ be the branch at $v_0$ involved in the duplication. This is not the branch that contains $v_0'$, otherwise the diameter would increase. If $B$ is entirely contained in $T_0$, we may repeat the above argument. Otherwise, $B=T_i$ for some $i$, and  
$$T=(T_0\odot(T_1,\ldots,T_i,T_i^{(1)},\ldots,T_i^{(s)},T_{i+1},\ldots,T_p))\odot (T'_0\odot(T'_1,\ldots,T'_q)),$$
where each $T_i^{(j)}$ is a copy of $T_i$. This concludes the proof.

\end{document}